\newtheorem{theorem}{Theorem}
\newtheorem{remark}[theorem]{Remark}
\newtheorem{lemma}[theorem]{Lemma}
\newtheorem{definition}[theorem]{Definition}
\newcommand{\eofproofmark}{\raisebox{-0.3mm}{\large$\Box$}}
\newcommand{\eofproof}{{\nopagebreak \hspace{0.5cm}\\[-\baselineskip]
       \nopagebreak \mbox{\,}\hfill\nopagebreak\eofproofmark\pagebreak[0]}}
\newenvironment{proof}{{\noindent\bf Proof\ \,}}{\eofproof\bigskip\pagebreak[3]

}
\newcommand{\balra}[2]{\makebox[#1][l]{$\displaystyle {#2}\hss$}}
\def\setR{{\mathbb{R}}}
\def\setN{{\mathbb{N}}}
\newcommand{\st}{\!:\,}
\newcommand{\nl}{\nonumber\\ }
\newcommand{\calA}{{\mathcal{A}}}
\newcommand{\calP}{{\mathcal{P}}}
\renewcommand{\phi}{\varphi}
\renewcommand{\epsilon}{\varepsilon}
\newcommand{\dd}[1]{\frac{d}{d{#1}}}
\newcommand{\Ww}{{W^{1,\infty}}}
\newcommand{\Www}{{W^{2,\infty}}}
\def\calL{{\cal L}}
\def\calB{{\cal B}}
\newcommand{\eref}[1]{{\rm(\mbox{\ref{#1}})}}
\newcommand{\IVP}[2]{IVP \eref{#1}-\eref{#2}}
\newcommand{\Lw}{{L^{\infty}}}
\newcommand{\ballp}[3]{{\calB_{#1}\!\left({#2};\,{#3}\right)}}
\newcommand{\esssup}{\mathop{\rm ess\,sup}}
\newcommand{\essinf}{\mathop{\rm ess\,inf}}
\newcommand{\vc}[1]{{\mathbf #1}}
\newcommand{\comment}[1]{}
\newcommand{\Wp}{{W^{1,p}}}
\begin{document}

\title{On second-order differentiability with respect to parameters for differential equations with 
state-dependent delays\thanks{This research was partially supported by the
Hungarian National Foundation for Scientific Research Grant
No. K73274.}}
\author{Ferenc Hartung\\
Department of Mathematics\\
University of Pannonia\\
8201 Veszpr\'{e}m, P.O. Box 158, Hungary}

\maketitle

\begin{abstract}
In this paper we consider a  class of differential equations
with state-dependent delays. 
We show first and second-order differentiability  of the solution
with respect to parameters in a pointwise sense and also using the $C$-norm on the state-space, assuming that the state-dependent 
time lag function is piecewise strictly monotone. 
\end{abstract}

\textbf{AMS(MOS) subject classification:} 34K05

\textbf{keywords:} Delay differential equation, state-dependent delay, differentiability
with respect to parameters.
\bigskip

\section{Introduction}

\label{sec_DDE_intro}

In this paper we study the SD-DDE
\begin{equation}\label{e1}
 \dot x(t) = f(t,x_t,x(t-\tau(t,x_t,\xi)),\theta),\qquad t\in [0,T],
\end{equation}
and the corresponding initial condition
\begin{equation}\label{e2}
x(t) = \phi(t),\qquad t\in[-r,0].
\end{equation}
Let $\Theta$ and $\Xi$ be normed linear spaces with norms $|\cdot|_\Theta$ and
$|\cdot|_\Xi$, respectively, and suppose $\theta\in\Theta$ and $\xi\in\Xi$.
Here we consider the initial function $\phi$, $\theta$ and $\xi$
as parameters in the IVP \eref{e1}-\eref{e2}, and we denote 
the corresponding solution by $x(t,\phi,\theta,\xi)$. The main goal of this paper is
to discuss the differentiability of $x(t,\phi,\theta,\xi)$ wrt
$\phi$, $\theta$ and $\xi$. 
By differentiability we  mean Fr\'echet-differentiability throughout the manuscript.
Differentiability of solutions wrt parameters is
an important qualitative question, but it also has a natural application
in the problem of identification of parameters  (see \cite{Ha3}).
But even for simple constant delay equations this problem leads to technical 
difficulties if the parameter is the delay \cite{hl, La2}. 
Similar difficulty arises in SD-DDEs. 

Theorem~\ref{t1} below yields that, under 
natural assumptions, Lipschitz continuous initial functions generate unique solutions
of \eref{e1}. As it is common for delay equations, as the time increases, the solution
of \eref{e1} gets smoother wrt the time: on the interval $[0,r]$ the solution is $C^1$, on $[r,2r]$
it is a $C^2$ function, etc. But for $t\in[0,r]$ the solution segment function $x_t$ is only
Lipschitz continuous. Therefore the linearization of the composite function 
$x(t-\tau(t,x_t,\xi))$ is not straightforward, which is clearly needed at some point of the proof to
obtain differentiability wrt parameters.

To illustrate the difficulty of this problem in the case when we can't assume continuous differentiability
of $x$, we recall a result of 
Brokate and Colonius \cite{BC}.  They studied  
SD-DDEs of the form
$$
x'(t) = f\Bigl(t,x(t-\tau(t,x(t)))\Bigr),   \qquad t\in[a,b],
$$
and investigated differentiability of the composition 
operator
$$
A\,:\ \Ww([a,b];\setR)\supset \bar X \to 
	L^p([a,b];\setR),
	\qquad A(x)(t) := x(t-\tau(t,x(t))).
$$
They assumed that $\tau$ is twice continuously differentiable
satisfying $a\leq t-\tau(t,v)\leq b$ for all $t\in[a,b]$ 
and $v\in\setR$, and considered as domain of $A$ the set
$$
\bar X:= \Bigl\{ x\in\Ww([a,b];\setR)\st\mbox{There exists }
	\epsilon>0\ \mbox{s.t. } 
	\frac d{dt}\Bigl(t-\tau(t,x(t))\Bigr)\geq\epsilon\
 \mbox{for  a.e. } t\in[a,b]\Bigr\}.
$$
It was shown in \cite{BC} that under these assumptions $A$ is continuously
differentiable with the derivative given by
$$
(DA(x)u)(t) = -\dot x(t-\tau(t,x(t)))D_2 \tau (t,x(t))u(t)+ u(t-\tau(t,x(t))) 
$$
for $u\in\Ww([a,b],\setR)$.
Both the strong $\Ww$-norm on the domain and the weak $L^p$-norm on the range,
together with  the choice  of the domain 
seemed to be necessary to obtain the results in \cite{BC}.
Note that Manitius in \cite{Man}  used a similar domain and norm
when he studied linearization for a 
 class of SD-DDEs.

Differentiability of solutions wrt parameters for SD-DDEs was studied 
in \cite{CHW,Ha2,Ha4, HT,W2,W3}. 
In \cite{Ha2} differentiability of the parameter map
was established at parameter values
where the compatibility condition
\begin{equation}\label{e403b}
\phi\in C^1,\qquad \dot\phi(0-)=f(0,\phi,\phi(-\tau(0,\phi,\xi)),\theta)
\end{equation}
is satisfied. 
It was proved that the parameter map is differentiable in a pointwise sense, i.e.,   the map
\begin{equation}\label{e410}
\Ww\times\Theta\times\Xi\to\setR^n,\qquad (\phi,\theta,\xi)\mapsto x(t,\phi,\theta,\xi)
\end{equation} 
is differentiable for every fixed $t$ from the domain of the solution.  Moreover, it was shown that 
the map
\begin{equation}\label{e411}
\Ww\times\Theta\times\Xi\to C,\qquad (\phi,\theta,\xi)\mapsto x_t(\cdot,\phi,\theta,\xi),
\end{equation} 
and, under a little more smoothness assumptions,  the map 
\begin{equation}\label{e412}
\Ww\times\Theta\times\Xi\to \Ww,\qquad (\phi,\theta,\xi)\mapsto x_t(\cdot,\phi,\theta,\xi)
\end{equation} 
is also differentiable at  fixed parameter values satisfying \eref{e403b}. 
Note that a condition similar to \eref{e403b} was used by Walter in \cite{W2} and \cite{W3}, 
where proved the existence of a $C^1$-smooth  solution semiflow
for  large classes of SD-DDES.

In \cite{HT} differentiability of the parameter map was 
proved without assuming the compatibility condition \eref{e403b}. Instead,  
it was assumed that the time lag function
$t\mapsto t-\tau(t,x_t,\xi)$ corresponding to a fixed solution $x$ 
is strictly monotone increasing, more precisely,
\begin{equation}\label{e405}
\mathop{\rm ess\,inf}_{0\leq t\leq\alpha} \frac{d}{dt} (t-\tau(t,x_t,\xi))>0,
\end{equation}
where $\alpha>0$ is such that the solution exists on $[-r,\alpha]$.
Also, instead of a ``pointwise'' differentiability, the differentiability of
the map
$$
\Ww\times\Theta\times\Xi\to \Wp,\qquad (\phi,\theta,\xi)\mapsto x_t(\cdot,\phi,\theta,\xi)
$$
was proved in a small neighborhood 
of the fixed parameter value. Note that here
 the differentiability was obtained using only a weak norm, the $W^{1,p}$-norm 
($1\leq p<\infty$) on the state-space.

Chen, Hu and Wu in \cite{CHW} extended the above result to proving second ordered
differentiability of the parameter map using the monotonicity condition \eref{e405} 
of the state-dependent time lag function,  the $W^{1,p}$-norm ($1\leq p<\infty$) 
on the state space, and the $W^{2,p}$-norm on the space of initial functions. 
Note that $\tau$  was not given explicitly  in \cite{CHW},
it was defined through a coupled differential equation, but it satisfied the
monotonicity condition \eref{e405}.

In \cite{Ha4} the IVP
\begin{eqnarray}
 \dot x(t) &=& f(t,x_t,x(t-\tau(t,x_t))),\qquad t\in[\sigma,T], \label{e413}\\
x(t) &=& \phi(t-\sigma),\qquad t\in[\sigma-r,\sigma]
\end{eqnarray}
was considered.
In this IVP the parameters $\theta$ and $\xi$ were omitted for simplicity,
but the initial time $\sigma$ was considered together with the initial function
as parameters in the equation.
Combining the techniques of \cite{Ha2} and \cite{HT}, and assuming
the appropriate monotonicity condition \eref{e405}, but without assuming the
compatibility condition \eref{e403b}, the continuous differentiability of the parameter maps
$$
\Ww\to\setR^n,\qquad \phi\mapsto x(t,\sigma,\phi)
$$
and
$$
\Ww\to C,\qquad \phi\mapsto x_t(\cdot,\sigma,\phi)
$$
were proved
for a fixed $t$ and $\sigma$ in a neighborhood of a fixed initial function. 
Note that with this technique similar result can't  be given using the $\Ww$-norm on the
state-space without using the compatibility condition. 

Assuming the compatibility condition \eref{e403b} it was also shown in \cite{Ha4}
that the maps
$$
[0,\alpha)\to\setR^n,\qquad \sigma\mapsto x(t,\sigma,\phi)
$$
and 
$$
[0,\alpha)\to C,\qquad \sigma\mapsto x_t(\cdot,\sigma,\phi)
$$
are differentiable for all $t\in[\sigma-r,\alpha]$ and $t\in[\sigma,\alpha]$, respectively,
and $\sigma$, $\phi$ in a neighborhood 
of a fixed parameter $(\sigma,\phi)$, and where $\alpha>0$ is a certain
constant. 
Assuming that the functions $f$ and $\tau$ have a special form in \eref{e413}, i.e., for equations of the form
\begin{eqnarray*}
\dot x(t)&=&\bar f\Bigl(t,x(t-\lambda^1(t)),\ldots,x(t-\lambda^m(t)),
\int_{-r}^0 A(t,\theta)x(s+\theta)\,ds,\nl
&&\quad x\Bigl(t-\bar\tau\Bigl[t,x(t-\xi^1(t)),\ldots,x(t-\xi^\ell(t)),
\int_{-r}^0 B(t,\theta)x(s+\theta)\,ds\Bigr]\Bigr)\Bigr)
\end{eqnarray*}
the differentiability
of the map 
$$
[0,\alpha)\to\setR^n,\qquad \sigma\mapsto x(t,\sigma,\phi)
$$
was shown in \cite{Ha4} for $t\in[\sigma,\alpha]$  using the monotonicity assumption \eref{e405}, but
without the compatibility condition \eref{e403b}. Note that in this case similar result 
does not hold for the map $\sigma\mapsto x_t(\cdot,\sigma,\phi)$ using the $C$-norm, 
which is not surprising, since it is easy to see \cite{Ha4} that the map 
$\sigma\mapsto x(t,\sigma,\phi)$ is differentiable at the 
point $t=\sigma$ if and only if a compatibility condition similar to \eref{e403b} is satisfied.

We refer the interested reader 
for related works on dependence of the solutions on parameters in SD-DDEs to \cite{Sl1, Sl2},
and for similar works in neutral SD-DDEs to \cite{Ha5, Ha7, W5}.

The organization of this paper is the following. 
In Section~\ref{sec_def} we summarize some notations and preliminary results that will be used 
in the manuscript.
In Section~\ref{sec_DDE_wp} first we list the detailed assumptions on the \IVP{e1}{e2}
we will need 
in our differentiability results later, and formulate a well-posedness result
(Theorem~\ref{t1}) concerning the \IVP{e1}{e2}, and prove some estimates will be
essential later.

In Section~\ref{sec_DDE_first} using and extending the method introduced in \cite{Ha4}, we discuss 
first order differentiability of the parameter maps associated to the \IVP{e1}{e2}.
In the main result of this section (see Theorem~\ref{t2} below) we show
the differentiability of the parameter maps \eref{e410} and \eref{e411}
 without using
the compatibility condition \eref{e403b}, and also relaxing the monotonicity condition
\eref{e405} to the condition that the time lag function $t\mapsto t-\tau(t,x_t,\xi)$
is ``piecewise strictly monotone'' in the sense of  Definition~\ref{d1}.
Note that omitting the compatibility condition is essential in the application 
of this results in \cite{Ha6}, where
we prove the convergence of the quasilinearization method
in the problem of parameter estimation. 
Also, in this application the existence of the derivative is needed in
this strong, pointwise sense, i.e., the differentiability of the map \eref{e410}
is used in \cite{Ha6}. 

In Section~\ref{sec_DDE_second} the main result is Theorem~\ref{t3}, which proves twice 
continuous differentiability of the maps 
$$
\Www\times\Theta\times\Xi\to \setR^n,\qquad (\phi,\theta,\xi)\mapsto x(t,\phi,\theta,\xi)
$$
and
$$
\Www\times\Theta\times\Xi\to C,\qquad (\phi,\theta,\xi)\mapsto x_t(\cdot,\phi,\theta,\xi)
$$
 at a parameter value $(\phi,\theta,\xi)$ satisfying the
compatibility condition \eref{e403b} and such that the corresponding time lag function
$t\mapsto\tau(t,x_t,\xi)$ is piecewise strictly monotone in the sense of Definition~\ref{d1}.
Under some additional condition, the continuity of the second derivative wrt the parameters
is obtained in a certain sense.  The only result known in the literature 
for the existence of a second derivative wrt the parameters in SD-DDEs is the result of Chen, Hu and Wu \cite{CHW}, where
the second order differentiability is proved only using a weak $\Wp$-norm on the state-space. 
Note that our result shows the existence of
the second derivative in a pointwise sense, i.e., at each fixed $t$, moreover, the technique of the proof is simpler.
\bigskip

\bigskip\pagebreak

\section{Notations and preliminaries}
\label{sec_def}
\setcounter{equation}0
\setcounter{theorem}0


Throughout the manuscript $r>0$ is a fixed
constant and $x_t\st [-r,0]\to\setR^n$, $x_t(\theta):=x(t+\theta)$ is the segment 
function. To avoid confusion with the notation of the segment function, sequences of functions are denoted
using the upper index: $x^k$.

$\setN$ and $\setN_0$  denote the set of positive and nonnegative integers, respectively.
A fixed norm on $\setR^n$ and its induced matrix norm on $\setR^{n\times n}$ 
are both denoted by $|\cdot|$. $C$  denotes 
the Banach space of continuous functions   
$\psi\st[-r,0]\to\setR^n$ equipped with the norm $|\psi|_C=\max\{|\psi(\zeta)|\st
\zeta\in[-r,0]\}$. 
$C^1$ is the space of continuously differentiable functions 
$\psi\st[-r,0]\to\setR^n$ where the norm is defined by $|\psi|_{C^1}=\max\{|\psi|_C,|\dot\psi|_C\}$.
$\Lw$ is the space of Lebesgue-measurable functions $\psi\st[-r,0]\to\setR^n$ which are
essentially bounded. The norm on $\Lw$ is denoted by $|\psi|_\Lw= \esssup\{|\psi(\zeta)|\st \zeta\in[-r,0]\}$.
$W^{1,p}$ denotes the Banach-space of absolutely continuous functions $\psi\st[-r,0]\to\setR^n$
of finite norm defined by
$$
|\psi|_{W^{1,p}} :=\left(\int_{-r}^0 |\psi(\zeta)|^p
	+|\dot \psi(\zeta)|^p\,d\zeta\right)^{1/p},\qquad 1\leq p<\infty,
$$
and for $p=\infty$
$$
|\psi|_{\Ww} := \max\left\{|\psi|_C, |\dot \psi|_\Lw\right\}.
$$
We note that $\Ww$ is equal to the space of Lipschitz continuous functions from $[-r,0]$ to $\setR^n$.
The subset of $\Ww$ consisting of those functions which have absolutely continuous first derivative and
essentially bounded second derivative
is denoted by $\Www$, where the norm is defined by 
$$
|\psi|_{\Www} := \max\left\{|\psi|_C,\ |\dot \psi|_C,\  |\ddot \psi|_\Lw\right\}.
$$ 
If the domain or the range of the functions is different from $[-r,0]$ and $\setR^n$, respectively,
we will use a more detailed notation. E.g., $C(X,Y)$ denotes the space of continuous functions mapping from 
$X$ to $Y$.
Finally, $\calL(X,Y)$ denotes the space of bounded linear operators from $X$ to $Y$,
where $X$ and $Y$ are normed linear spaces.
An open ball in the normed linear space $X$ centered at a point $x\in X$ with radius 
$\delta$ is denoted by $\ballp{X}{x}{\delta}:=\{ y\in Y\st |x-y|<\delta\}$.

The derivative of a single variable function $v(t)$ wrt $t$ is 
denoted by $\dot v$. Note that all derivatives we use in this paper are Fr\'echet derivatives.
The partial derivatives of a function $g\st X_1\times X_2\to Y$ wrt the first and second variables will be denoted by
$D_1g$ and $D_2g$, respectively.  
The second-order partial derivative wrt its $i$th and $j$th  variables ($i,j=1,2$)
of the function  $g\st X_1\times X_2\to Y$ at the point $(x_1,x_2)\in X_1\times X_2$ is
the bounded bilinear operator $A\langle \cdot,\cdot\rangle\st X_i\times X_j\to Y$, if 
$$
\lim_{k\to0}\sup_{h\neq 0}\frac{|D_i g(x_1+k\delta_{1j},x_2+k\delta_{2j})h - D_i g(x_1,x_2)h-A\langle h,k\rangle|_Y}{|h|_{X_i}|k|_{X_1}}=0,
\qquad h\in X_i,\ k\in X_j,
$$
where $\delta_{ij}=1$ for $i=j$ and $\delta_{ij}=0$ for $i\neq j$ is the Kronecker-delta.
We will use the notation $D_{ij}g(x_1,x_2)=A$. The norm of the bilinear operator $A\langle \cdot,\cdot\rangle\st X_i\times X_j\to Y$
is defined by
$$
|A|_{\calL^2(X_i\times X_j,Y)}:=\sup
\left\{\frac{|A\langle h,k\rangle|_Y}{|h|_{X_i}|k|_{X_j}}\st h\in X_i, h\neq0,\ k\in X_j, k\neq0 \right\}.
$$
In the case when $X_1=\setR$, we simply write $D_1g(x_1,x_2)$ instead of the more precise
notation $D_1g(x_1,x_2)1$, i.e., here $D_1g$ denotes the value in
$Y$ instead of the linear operator $\calL(\setR,Y)$. In the case when, let say, $X_2=\setR^n=Y$,
then we identify the linear operator $D_2g(x_1,x_2)\in\calL(\setR^n,\setR^n)$ by an $n\times n$
matrix.
\bigskip

Next we formulate a result which is a simple consequence of the Gronwall's lemma.
\medskip

\begin{lemma}[see, e.g., \cite{Ha4}]\label{l0}
Suppose $a>0$, $b\st [0,\alpha]\to [0,\infty)$ and $u\st[-r,\alpha]\to \setR^n$ 
are continuous functions such that $a\geq |u_0|_C$, and
\begin{equation}\label{e301}
|u(t)|\leq a +\int_0^t b(s) |u_s|_C\,ds,\qquad t\in[0,\alpha].
\end{equation}
Then
\begin{equation}\label{e303}
|u(t)|\leq |u_t|_C\leq a e^{\int_0^\alpha b(s)\,ds},\qquad t\in[0,\alpha].
\end{equation}
\end{lemma}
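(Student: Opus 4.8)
The plan is to reduce the functional (segment-norm) integral inequality \eref{e301} to a classical one-dimensional Gronwall inequality for the scalar function $v(t):=|u_t|_C$, $t\in[0,\alpha]$, and then to invoke the standard Gronwall lemma. First note that the left inequality in \eref{e303} is immediate: since $u(t)=u_t(0)$, we have $|u(t)|=|u_t(0)|\leq\max_{\zeta\in[-r,0]}|u_t(\zeta)|=|u_t|_C$. Also $v$ is continuous on $[0,\alpha]$ because $u$ is continuous on the compact interval $[-r,\alpha]$.

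The key step is to show that $v$ satisfies the scalar inequality $v(t)\leq a+\int_0^t b(s)v(s)\,ds$ for all $t\in[0,\alpha]$. Fix $t\in[0,\alpha]$ and $\zeta\in[-r,0]$ and distinguish two cases according to the sign of $t+\zeta$. If $t+\zeta\leq 0$, then $u(t+\zeta)$ is a value of the initial segment $u_0$, so $|u(t+\zeta)|\leq|u_0|_C\leq a$. If $0<t+\zeta\leq t\leq\alpha$, then applying \eref{e301} at the time $t+\zeta$ and using $v(s)=|u_s|_C$ gives $|u(t+\zeta)|\leq a+\int_0^{t+\zeta} b(s)v(s)\,ds$. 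In either case $|u(t+\zeta)|\leq a+\int_0^{t} b(s)v(s)\,ds$, where in the second case we enlarged the interval of integration from $[0,t+\zeta]$ to $[0,t]$ using $b\geq 0$, and in the first case we added a nonnegative term. Taking the supremum over $\zeta\in[-r,0]$ yields $v(t)\leq a+\int_0^{t} b(s)v(s)\,ds$.

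Finally, since $v$ is continuous, $b$ is continuous and nonnegative, and $a>0$, the classical scalar Gronwall lemma applied to this inequality gives $v(t)\leq a\exp\!\bigl(\int_0^t b(s)\,ds\bigr)$ for $t\in[0,\alpha]$, and bounding the exponent by $\int_0^\alpha b(s)\,ds$ (again using $b\geq 0$) produces \eref{e303}, completing the argument.

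There is essentially no serious obstacle here; the only point requiring a little care is the case distinction on the sign of $t+\zeta$, which is what converts the $C$-norm inequality \eref{e301} into a genuine one-dimensional Gronwall inequality, and the observation that the upper limit of integration may be relaxed from $t+\zeta$ to $t$ precisely because $b$ is nonnegative.
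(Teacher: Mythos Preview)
Your argument is correct and is precisely the reduction to the scalar Gronwall inequality that the paper has in mind when it calls the lemma ``a simple consequence of Gronwall's lemma'' (the paper itself does not give a proof, only a reference). Your case distinction on the sign of $t+\zeta$ and the subsequent enlargement of the integration interval are exactly the standard way to pass from the segment-norm inequality to a scalar one for $v(t)=|u_t|_C$.
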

\medskip


\begin{lemma}\label{ll12b}
Suppose $\psi\in\Ww$. Then 
$$
|\psi(b)-\psi(a)|\leq |\dot \psi|_\Lw|b-a|
$$
for every $[a,b]\subset[-r,0]$. 
\end{lemma}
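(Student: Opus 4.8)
The plan is to reduce the estimate to the fundamental theorem of calculus for absolutely continuous functions. By the definition of $W^{1,p}$ adopted in Section~\ref{sec_def}, every $\psi\in\Ww$ is absolutely continuous on $[-r,0]$, and its (a.e.-defined) derivative $\dot\psi$ lies in $\Lw$ with $|\dot\psi|_\Lw<\infty$. On the bounded interval $[-r,0]$ essential boundedness of $\dot\psi$ forces $\dot\psi\in L^1$, so for any subinterval $[a,b]\subset[-r,0]$ the integral representation
$$
\psi(b)-\psi(a)=\int_a^b \dot\psi(s)\,ds
$$
is legitimate.

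From here the argument is a one-line estimate: taking the $\setR^n$-norm of both sides, moving it inside the integral, and bounding the integrand by its essential supremum,
$$
|\psi(b)-\psi(a)|=\left|\int_a^b \dot\psi(s)\,ds\right|\le\int_a^b |\dot\psi(s)|\,ds\le |\dot\psi|_\Lw\,(b-a)=|\dot\psi|_\Lw\,|b-a|,
$$
where the last equality uses $a\le b$. This is exactly the claimed inequality.

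There is no genuine obstacle here; the statement is essentially the quantitative form of the remark in Section~\ref{sec_def} that $\Ww$ coincides with the space of Lipschitz continuous functions $[-r,0]\to\setR^n$, now made precise with the sharp Lipschitz constant $|\dot\psi|_\Lw$. The only point worth flagging is that the passage to the integral representation relies on absolute continuity, which is built into the very definition of the spaces $W^{1,p}$ used in this paper.
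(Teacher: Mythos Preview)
Your proof is correct and is the standard argument; in fact the paper states this lemma without proof, treating it as an elementary consequence of the definition of $\Ww$. Your reduction to the fundamental theorem of calculus for absolutely continuous functions is exactly what is implicitly assumed.
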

\medskip

We recall the following result from \cite{BC}, which was essential to prove differentiability
wrt parameters in SD-DDEs in \cite{CHW}, \cite{Ha4} and \cite{HT}. We state the result in a simplified form we
need later,  it is formulated in a more general form in \cite{BC}. 
Note that the second part of the lemma was 
stated in \cite{BC} under the assumption $|u^k-u|_{\Ww([0,\alpha],\setR)}\to0$ as $k\to\infty$,
but this stronger assumption on the convergence is  not needed in the proof. 
See also the proof of Lemma 4.26 in \cite{Ha1}.
\medskip

\begin{lemma}[\cite{BC}]\label{l1}
Let  $g\in L^1([c,d],\setR^n)$, $\epsilon>0$, and  $u\in\calA(\epsilon)$,
where
$$
 \calA(\epsilon):= 
\{ v\in W^{1,\infty}([a,b],[c,d])\,:\, 
\dot v(s)\geq \epsilon\ \mbox{for a.e.\ } s\in[a,b]\}.
$$
Then
\begin{equation}\label{e10}
\int_a^b|g(u(s))|\, ds\leq \frac 1\epsilon \int_{c}^d |g(s)|\,ds.
\end{equation}
Moreover, if the sequence $u^k\in\calA(\epsilon)$ is such that $|u^k-u|_{C([a,b],\,\setR)}\to0$
as $k\to\infty$, then
\begin{equation}\label{e68}
\lim_{k\to\infty}\int_a^b\Bigl|g(u^k(s))-g(u(s))\Bigr|\,ds=0.
\end{equation}
\end{lemma}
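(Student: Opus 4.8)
The plan is to establish the two assertions separately: \eref{e10} via a change of variables, and \eref{e68} by approximating $g$ by continuous functions and controlling the error with \eref{e10}.

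For \eref{e10}, I would first observe that since $u\in\calA(\epsilon)$ is absolutely continuous with $\dot u(s)\geq\epsilon>0$ for a.e. $s\in[a,b]$, it is strictly increasing, and its inverse $w:=u^{-1}$ maps $[u(a),u(b)]$ onto $[a,b]$ and is Lipschitz with constant $1/\epsilon$; in particular $u$ is bi-Lipschitz onto its image, carries null sets to null sets, and hence $g\circ u$ is measurable on $[a,b]$. Then the change of variables formula for monotone absolutely continuous maps gives
$$\int_a^b|g(u(s))|\,\dot u(s)\,ds=\int_{u(a)}^{u(b)}|g(t)|\,dt,$$
and combining this with $\dot u\geq\epsilon$ and $[u(a),u(b)]\subseteq[c,d]$ yields
$$\int_a^b|g(u(s))|\,ds\leq\frac1\epsilon\int_a^b|g(u(s))|\,\dot u(s)\,ds=\frac1\epsilon\int_{u(a)}^{u(b)}|g(t)|\,dt\leq\frac1\epsilon\int_c^d|g(t)|\,dt.$$
I expect the justification of the change of variables step to be the main (and essentially only) technical point; it is precisely the hypothesis $\dot u\geq\epsilon$ that makes it go through, since without a positive lower bound on $\dot u$ the composition $g\circ u$ need not even be measurable, and the monotonicity is what reduces the substitution to a standard one-dimensional result.

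For \eref{e68}, I would fix $\eta>0$ and use the density of $C([c,d],\setR^n)$ in $L^1([c,d],\setR^n)$ to choose $\tilde g\in C([c,d],\setR^n)$ with $\int_c^d|g(t)-\tilde g(t)|\,dt<\eta$. Then I would split
$$\int_a^b|g(u^k(s))-g(u(s))|\,ds\leq\int_a^b|(g-\tilde g)(u^k(s))|\,ds+\int_a^b|\tilde g(u^k(s))-\tilde g(u(s))|\,ds+\int_a^b|(g-\tilde g)(u(s))|\,ds.$$
Applying \eref{e10} to $g-\tilde g$ (legitimate since $u^k,u\in\calA(\epsilon)$) bounds the first and third terms by $\eta/\epsilon$ each, uniformly in $k$; for the middle term, uniform continuity of $\tilde g$ on $[c,d]$ together with $|u^k-u|_{C([a,b],\setR)}\to0$ forces $\tilde g\circ u^k\to\tilde g\circ u$ uniformly on $[a,b]$, so that term tends to $0$ as $k\to\infty$. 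Hence $\limsup_{k\to\infty}\int_a^b|g(u^k(s))-g(u(s))|\,ds\leq 2\eta/\epsilon$, and letting $\eta\downarrow0$ gives \eref{e68}. This argument also makes transparent why only uniform convergence of $u^k$ is needed rather than $\Ww$-convergence: the composition is seen directly only through the continuous approximant $\tilde g$, while the two remaining terms are absorbed by \eref{e10}, which is insensitive to the mode of convergence of $u^k$.
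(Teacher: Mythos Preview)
Your argument is correct. The paper does not supply its own proof of this lemma; it quotes the result from \cite{BC} and refers to \cite{Ha1} for details, so there is no in-paper proof to compare against. Your approach---change of variables for \eref{e10}, then density of continuous functions in $L^1$ combined with \eref{e10} to absorb the approximation error for \eref{e68}---is the standard one and is precisely what the paper's remark after the statement alludes to: because the error terms are handled by \eref{e10} uniformly in $k$, only $C$-convergence of $u^k$ (not $\Ww$-convergence) is required, exactly as you explain in your final sentence.
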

\bigskip

\begin{remark}\label{r1}\rm
Changing to the new variable $s=-t$ in the integrals in \eref{e10} and \eref{e68} give easily that
 the statements of Lemma~\ref{l1} hold also in the case when conditions $u,u^k \in\calA(\epsilon)$ are replaced
by $-u,-u^k \in\calA(\epsilon)$.
\end{remark}

In the next lemma we relax the condition $u\in\calA(\epsilon)$ of the previous lemma.
\bigskip

\begin{lemma}\label{l2}
 Suppose $g\in\Lw([c,d],\setR)$, and $u\st[a,b]\to[c,d]$ is an absolutely continuous
function, 
and
\begin{equation}\label{e311}
\essinf \{ \dot u(s)\st s\in[a',b']\}>0,\qquad \mbox{for all }\ [a',b']\subset(a,b).
\end{equation}
Then the composite function $g\circ u\in\Lw([a,b],\setR)$, and
$|g\circ u|_{\Lw([a,b],\setR)}\leq |g|_{\Lw([c,d],\setR)}$.
\end{lemma}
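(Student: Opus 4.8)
The plan is to reduce the statement to Lemma~\ref{l1} (together with Remark~\ref{r1}) by an exhaustion argument on the interior of $[a,b]$. First I would observe that the essential supremum of $g\circ u$ over $[a,b]$ equals its essential supremum over the open interval $(a,b)$, since the endpoints form a null set. Next, for each $[a',b']\subset(a,b)$ the hypothesis \eref{e311} gives $\epsilon=\epsilon(a',b'):=\essinf\{\dot u(s)\st s\in[a',b']\}>0$, so the restriction $u|_{[a',b']}$ lies in $\calA(\epsilon)$ as an element of $W^{1,\infty}([a',b'],[c,d])$ (absolute continuity plus the essential lower bound on $\dot u$ on a compact subinterval forces $u$ to be Lipschitz there). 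Applying the $L^1$-estimate \eref{e10} of Lemma~\ref{l1} with $g$ replaced by $g\cdot\chi_{\{|g|>M\}}$ for an arbitrary level $M$, one gets
$$
\int_{a'}^{b'}\chi_{\{|g(u(s))|>M\}}\,ds \;\leq\; \frac{1}{\epsilon}\int_c^d \chi_{\{|g(s)|>M\}}\,ds.
$$
Hence if $M>|g|_{\Lw([c,d],\setR)}$ the right-hand side is zero, so $|g(u(s))|\leq M$ for a.e. $s\in[a',b']$; letting $M\downarrow|g|_{\Lw([c,d],\setR)}$ along a sequence gives $|g\circ u|_{\Lw([a',b'],\setR)}\leq |g|_{\Lw([c,d],\setR)}$.

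To finish, I would exhaust $(a,b)$ by an increasing sequence of compact subintervals, e.g. $[a_n,b_n]$ with $a_n\downarrow a$, $b_n\uparrow b$ and $\bigcup_n[a_n,b_n]=(a,b)$. The bound just obtained holds on each $[a_n,b_n]$ with the same right-hand side $|g|_{\Lw([c,d],\setR)}$, independent of $n$. Since a countable union of null sets is null, $|g(u(s))|\leq|g|_{\Lw([c,d],\setR)}$ for a.e. $s\in(a,b)$, hence for a.e. $s\in[a,b]$. In particular $g\circ u$ is measurable (it is measurable on each $[a_n,b_n]$ by Lemma~\ref{l1}, being in $L^1$ there) and essentially bounded, so $g\circ u\in\Lw([a,b],\setR)$ with $|g\circ u|_{\Lw([a,b],\setR)}\leq|g|_{\Lw([c,d],\setR)}$, as claimed.

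The main obstacle is purely a matter of care at the endpoints: \eref{e311} is assumed only on compact subintervals of the \emph{open} interval $(a,b)$, so $\dot u$ may degenerate or vanish approaching $a$ or $b$, and one cannot apply Lemma~\ref{l1} directly on all of $[a,b]$. The exhaustion device circumvents this, using crucially that the resulting bound $|g|_{\Lw([c,d],\setR)}$ does not depend on the subinterval. One should also note that the lemma does not assert $u$ is globally Lipschitz on $[a,b]$ — it need not be — so the conclusion genuinely concerns the composition $g\circ u$ and not $u$ itself; the measurability of $g\circ u$ on $[a,b]$ follows by gluing the measurable restrictions on the $[a_n,b_n]$.
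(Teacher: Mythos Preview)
Your strategy---apply the change-of-variables estimate with characteristic functions on compact subintervals, then exhaust $(a,b)$---is sound in spirit and different from the paper's direct measure-theoretic covering argument. However, there is a genuine error in your justification for invoking Lemma~\ref{l1}: the parenthetical claim ``absolute continuity plus the essential lower bound on $\dot u$ on a compact subinterval forces $u$ to be Lipschitz there'' is \emph{false}. A lower bound on $\dot u$ does not give an upper bound. For instance, with $c\in(a,b)$, the function $u(s)=s+\sqrt{\max(s-c,0)}$ is absolutely continuous on $[a,b]$ with $\dot u\geq 1$ a.e., yet $\dot u$ is unbounded near $c$ from the right, so $u\notin W^{1,\infty}$ on any compact subinterval containing $c$ in its interior. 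Hence $u|_{[a',b']}$ need not lie in $\calA(\epsilon)$ as defined, and Lemma~\ref{l1} as stated does not apply.

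The repair is straightforward: the inequality \eref{e10} itself holds under the weaker hypothesis that $u$ is absolutely continuous and strictly increasing with $\dot u\geq\epsilon$ a.e.\ on $[a',b']$, by the standard change-of-variables formula for absolutely continuous monotone functions (write $\int_{a'}^{b'}|g(u(s))|\,ds=\int_{a'}^{b'}|g(u(s))|\dot u(s)\frac{1}{\dot u(s)}\,ds\leq\frac{1}{\epsilon}\int_{u(a')}^{u(b')}|g(v)|\,dv$). Once you replace your appeal to Lemma~\ref{l1} by this direct argument, your exhaustion proof goes through. By contrast, the paper bypasses Lemma~\ref{l1} entirely: it shows directly that $u^{-1}$ of a null set is null (via a covering of the null set by short intervals and the lower bound on $\dot u$ away from the endpoints), and then deduces measurability of $g\circ u$ by decomposing a measurable set as a closed set union a null set. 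The paper's route is self-contained; yours is shorter once the change-of-variables estimate is in hand, but you must state and justify that estimate under the correct (weaker) hypothesis rather than the $W^{1,\infty}$ hypothesis of Lemma~\ref{l1}.
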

\begin{proof}
First note that since $u$ is absolutely continuous, it is a.e.\ differentiable on $[a,b]$,
and condition \eref{e311} yields that $u$ is strictly monotone increasing on $[a,b]$.
Let $G:=\{v\in[c,d]\st g(v)\ \mbox{is not defined or } |g(v)|>|g|_{\Lw([c,d],\setR)}\}$.
Then $\mathop{meas}(G)=0$. Let $A:=\{t\in[a,b]\st g(u(t)) \ 
\mbox{is not defined or } |g(u(t))|>|g|_{\Lw([c,d],\setR)}\}$. Clearly, $A=u^{-1}(G)$.
Let $0<\epsilon<(b-a)/2$ be fixed. Then let $c':=u(a+\epsilon)$,  $d':=u(b-\epsilon)$, and
let $M:=\essinf \{ \dot u(s)\st s\in[a+\epsilon,b-\epsilon]\}$. Then \eref{e311} yields
$M>0$. Since $G$ is of measure 0, there exist open intervals $(c_i,d_i)$, $i\in\setN$ 
such that 
$$
G\subset \bigcup_{i=1}^\infty (c_i,d_i)\quad \mbox{and}\quad \sum_{i=1}^\infty (d_i-c_i)
< \epsilon M.
$$ 
We have 
$$
A=u^{-1}(G)= u^{-1}\Bigl(G\cap[c,c']\Bigr)\cup u^{-1}\Bigl(G\cap[c',d']\Bigr)
\cup u^{-1}\Bigl(G\cap[d',d]\Bigr),
$$
and the monotonicity of $u$ yields
$u^{-1}\Bigl(G\cap[c,c']\Bigr)\subset [a,a+\epsilon]$,\
$u^{-1}\Bigl(G\cap[d',d]\Bigr)\subset [b-\epsilon,b]$,
and
$$
u^{-1}\Bigl(G\cap[c',d']\Bigr)
\subset u^{-1}\Bigl([c',d']\cap\displaystyle\bigcup_{i=1}^\infty [c_i,d_i]\Bigr)
= \bigcup_{i=1}^\infty u^{-1}\Bigl([c',d']\cap [c_i,d_i]\Bigr)
= \bigcup_{i=1}^\infty [a_i,b_i],
$$
where $a_i:=u^{-1}(\max\{c',c_i\})$ and $b_i:=u^{-1}(\min\{d',d_i\})$.
The definition of $M$ yields
$$
d_i-c_i\geq\min\{d',d_i\}-\max\{c',c_i\}=u(b_i)-u(a_i)
=\int_{a_i}^{b_i}\dot u(s)\,ds\geq M(b_i-a_i).
$$
Therefore $A\subset [a,a+\epsilon]\cup[b-\epsilon,b]\cup \bigcup_{i=1}^\infty [a_i,b_i]$,
and the sum of the length of the closed intervals covering $A$ is less than $3\epsilon$.
Since $\epsilon>0$ is arbitrary, we get that $A$ is Lebesgue-measurable and $\mathop{meas}(A)=0$.

We show that $g\circ u$ is Lebesgue-measurable. Let $\kappa\in\setR$, and
define $G_\kappa:=\{v\in[c,d]\st g(v)\ \mbox{is defined and } g(v)<\kappa\}$.
 $G_\kappa$ is a Lebesgue-measurable set,
since $g\in\Lw([c,d],\setR)$. Therefore
there exists a closed set $F_\kappa$ such that $F_\kappa\subset G_\kappa$ and
$\mathop{meas}(G_\kappa\setminus F_\kappa)=0$. Since $u$ is continuous,
$u^{-1}(F_\kappa)$ is a closed set, and therefore, it is Lebesgue-measurable.
Moreover, $u^{-1}(G_\kappa)=u^{-1}(F_\kappa)\cup u^{-1}(G_\kappa\setminus F_\kappa)$,
and as in the first part of the proof, we get that $u^{-1}(G_\kappa\setminus F_\kappa)$
is measurable, and so is $u^{-1}(G_\kappa)$. 
\end{proof}
\medskip

Clearly, the statement of the previous Lemma is also valid if 
\eref{e311} is changed to 
$$
\esssup \{ \dot u(s)\st s\in[a',b']\}<0,\qquad \mbox{for all }\ [a',b']\subset(a,b).
$$
\medskip

We will use the following notation.
\medskip

\begin{definition}\label{d1}
 ${\cal PM}([a,b],[c,d])$ denotes the
set of absolutely continuous functions $u\st[a,b]\to[c,d]$ which are
 piecewise strictly monotone on $[a,b]$ in the sense that there exists a finite mesh
$a=t_0<t_1<\cdots<t_{m-1}<t_m=b$ of $[a,b]$ such that for all $i=0,1,\ldots,m-1$
 either 
$$
\essinf \{ \dot u(s)\st s\in[a',b']\}>0,\qquad \mbox{for all }\ [a',b']\subset(t_i,t_{i+1})
$$
or
$$
\esssup \{ \dot u(s)\st s\in[a',b']\}<0,\qquad \mbox{for all }\ [a',b']\subset(t_i,t_{i+1}).
$$
\end{definition}
\medskip

Lemma~\ref{l2} implies the next result immediately.
\medskip

\begin{lemma}\label{l3}
 Suppose $g\in\Lw([c,d],\setR^n)$, and $u\in{\cal PM}([a,b],[c,d])$. 
Then the composite function $g\circ u\in\Lw([a,b],\setR^n)$ and $|g\circ u|_{\Lw([a,b],\,\setR^n)}
\leq|g|_{\Lw([c,d],\,\setR^n)}$.
\end{lemma}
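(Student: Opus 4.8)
The plan is to reduce Lemma~\ref{l3} to Lemma~\ref{l2} by working on each piece of the mesh separately. Suppose $u\in{\cal PM}([a,b],[c,d])$ with mesh $a=t_0<t_1<\cdots<t_m=b$. On each subinterval $[t_i,t_{i+1}]$, the restriction $u|_{[t_i,t_{i+1}]}$ is absolutely continuous into $[c,d]$ and, by Definition~\ref{d1}, satisfies either the hypothesis \eref{e311} of Lemma~\ref{l2} (with $(a,b)$ replaced by $(t_i,t_{i+1})$) or the corresponding decreasing variant mentioned in the remark following Lemma~\ref{l2}. Hence Lemma~\ref{l2}, applied coordinatewise (the range $\setR^n$ is handled by treating each of the $n$ components of $g$ in turn, since $g\in\Lw([c,d],\setR^n)$ means each component is in $\Lw([c,d],\setR)$), gives that $g\circ u$ restricted to $[t_i,t_{i+1}]$ lies in $\Lw([t_i,t_{i+1}],\setR^n)$ with $|g\circ u|_{\Lw([t_i,t_{i+1}],\setR^n)}\leq |g|_{\Lw([c,d],\setR^n)}$.

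The remaining points to tie together are measurability and the essential-supremum bound on all of $[a,b]$. Measurability of $g\circ u$ on $[a,b]$ follows because $[a,b]$ is the finite union of the $[t_i,t_{i+1}]$, on each of which $g\circ u$ is measurable, and a function that is measurable on each of finitely many measurable sets covering the domain is measurable on the union; overlaps at the finitely many mesh points $t_i$ are null sets and cause no trouble. For the norm bound, since the essential supremum over a finite union is the maximum of the essential suprema over the pieces, we get
$$
|g\circ u|_{\Lw([a,b],\setR^n)}=\max_{0\leq i\leq m-1}|g\circ u|_{\Lw([t_i,t_{i+1}],\setR^n)}\leq |g|_{\Lw([c,d],\setR^n)}.
$$

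There is essentially no obstacle here, which is presumably why the paper says ``Lemma~\ref{l2} implies the next result immediately.'' The only mild care needed is the vector-valued range: Lemma~\ref{l2} is stated for scalar $g$, so one must note that membership in $\Lw$ and the $\Lw$-norm bound for $\setR^n$-valued functions reduce to the scalar statements applied to each component (up to the equivalence of the chosen norm on $\setR^n$ with the max norm, which only affects the constant by a fixed factor — or, if one insists on the inequality with constant exactly $1$, one observes directly that the set where $|g(u(t))|>|g|_{\Lw([c,d],\setR^n)}$ is the $u$-preimage of a null set, and reruns the measure-zero preimage argument of Lemma~\ref{l2} verbatim on each mesh subinterval). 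I would state the proof in the first form: apply Lemma~\ref{l2} componentwise on each $[t_i,t_{i+1}]$, then assemble over the finite mesh.
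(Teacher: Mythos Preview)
Your proposal is correct and takes exactly the approach the paper intends: the paper gives no proof beyond the sentence ``Lemma~\ref{l2} implies the next result immediately,'' and your argument---applying Lemma~\ref{l2} (or its decreasing variant) on each mesh subinterval and assembling over the finite union---is precisely how one fills in that implication. Your remark about the vector-valued case is also apt; the cleanest route is the one you mention parenthetically, namely that the set $\{v:|g(v)|>|g|_{\Lw([c,d],\setR^n)}\}$ is null and the measure-zero preimage argument from Lemma~\ref{l2} applies verbatim, which gives the inequality with constant exactly $1$ without any detour through components.
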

\medskip

The next lemma generalizes the convergence property \eref{e68} to the class ${\cal PM}$.
We comment that 
to prove the convergence property \eref{e68} for $u,u^k\in{\cal PM}([a,b],[c,d])$, we need the
stronger assumption $|u^k-u|_{\Ww([a,b],\,\setR)}\to0$ instead of
$|u^k-u|_{C([a,b],\,\setR)}\to0$ what is used in Lemma~\ref{l1}.
\medskip

\begin{lemma}\label{l4}
 Suppose $g\in\Lw([c,d],\setR^n)$,  and
 $u, u^k\in{\cal PM}([a,b],[c,d])$ ($k\in\setN$)
satisfying 
\begin{equation}\label{e312}
|u^k-u|_{\Ww([a,b],\,\setR)}\to0,\qquad \mbox{as\ } k\to\infty.
\end{equation}
Then 
\begin{equation}\label{e313}
\int_a^b |g(u^k(s))-g(u(s))|\,ds\to0,\qquad \mbox{as\ } k\to\infty.
\end{equation}
\end{lemma}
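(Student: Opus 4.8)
The plan is to reduce the general piecewise-strictly-monotone situation to the strictly monotone case already covered by the convergence assertion \eref{e68} of Lemma~\ref{l1} (together with Remark~\ref{r1} for the decreasing pieces), by cutting out small neighborhoods of the finitely many break points where the monotonicity of $u$ may degenerate. Fix the mesh $a=t_0<t_1<\cdots<t_m=b$ associated to $u$ by Definition~\ref{d1}, let $\delta>0$ be arbitrary, and for a small $\eta>0$ to be chosen put $I_i:=[t_i+\eta,\,t_{i+1}-\eta]$ ($i=0,\dots,m-1$) and $B:=[a,b]\setminus\bigcup_{i}I_i$, so that $\mathop{\rm meas}(B)\le 2m\eta$. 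First I would dispose of $B$: since $u,u^k\in{\cal PM}([a,b],[c,d])$, Lemma~\ref{l3} gives $|g(u^k(s))|\le|g|_{\Lw([c,d],\setR^n)}$ and $|g(u(s))|\le|g|_{\Lw([c,d],\setR^n)}$ for a.e.\ $s\in[a,b]$, whence $\int_B|g(u^k(s))-g(u(s))|\,ds\le 4m\eta\,|g|_{\Lw([c,d],\setR^n)}$; choosing $\eta$ small enough (and $\eta<\frac12\min_i(t_{i+1}-t_i)$, so each $I_i$ is nondegenerate) makes this $<\delta/2$ for every $k$.

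Next I would handle each $I_i$ separately. Since $I_i\subset(t_i,t_{i+1})$, Definition~\ref{d1} gives that, on $I_i$, $u$ is strictly monotone: in the increasing case $m_i:=\essinf\{\dot u(s)\st s\in I_i\}>0$, and the decreasing case is entirely analogous via Remark~\ref{r1}. From \eref{e312} we get $|\dot u^k-\dot u|_{\Lw([a,b],\setR)}\to0$, so for all sufficiently large $k$ one has $\dot u^k(s)\ge m_i/2$ for a.e.\ $s\in I_i$; hence $u|_{I_i}$ and $u^k|_{I_i}$ belong to the class $\calA(m_i/2)$ of Lemma~\ref{l1} (with $[a,b]$ there taken to be $I_i$), while \eref{e312} also yields $|u^k-u|_{C(I_i,\setR)}\to0$. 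The convergence statement \eref{e68} then gives $\int_{I_i}|g(u^k(s))-g(u(s))|\,ds\to0$ as $k\to\infty$. Summing over the finitely many $i$, there is $K\in\setN$ with $\sum_i\int_{I_i}|g(u^k(s))-g(u(s))|\,ds<\delta/2$ for $k\ge K$; combined with the estimate on $B$ this gives $\int_a^b|g(u^k(s))-g(u(s))|\,ds<\delta$ for $k\ge K$, and since $\delta$ was arbitrary, \eref{e313} follows.

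The one delicate point is exactly the behavior near the break points $t_i$: there $\dot u$ need not stay bounded away from $0$, so Lemma~\ref{l1} is unavailable and one must instead absorb the contribution of small neighborhoods of the $t_i$ using only the uniform bound $|g\circ v|_{\Lw}\le|g|_{\Lw}$ of Lemma~\ref{l3} for $v\in\{u,u^k\}\subset{\cal PM}$. This is also the place where the full $\Ww$-convergence in \eref{e312} is used rather than mere uniform convergence: bounding $\essinf_{I_i}\dot u^k$ away from $0$ requires $\dot u^k\to\dot u$ in $\Lw$, which is precisely what $C$-convergence alone does not deliver — in agreement with the comment preceding the lemma that the stronger hypothesis is genuinely needed here.
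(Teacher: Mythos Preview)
Your proof is correct. Both you and the paper cut out $\eta$-neighborhoods of the break points $t_i$ and absorb the contribution there via the uniform bound $|g\circ v|_{\Lw}\le|g|_{\Lw}$ of Lemma~\ref{l3}. The difference lies in the treatment of the monotone pieces $I_i$: you invoke the convergence statement \eref{e68} of Lemma~\ref{l1} (with Remark~\ref{r1}) directly, after using the $\Ww$-convergence to force $u^k|_{I_i}\in\calA(m_i/2)$ for large $k$; the paper instead runs a self-contained density argument, first establishing \eref{e313} for characteristic functions of intervals and step functions (where only $C$-convergence is needed), and then approximating a general $g\in\Lw$ by a bounded step function $h^*$ in $L^1$, controlling $\int_{I_i}|g\circ u^k-h^*\circ u^k|$ by a change of variables that requires $|\dot u^k|\ge M/2$ on $I_i$. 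Your route is shorter and cleaner; the paper's has the virtue of not relying on the cited result from \cite{BC} and of making visible that the step-function case already goes through with mere $C$-convergence, so that the stronger $\Ww$-hypothesis is used only to control the $L^1$ approximation error after composition.
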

\begin{proof}
Clearly, it is enough to show \eref{e313} for the case when $g$ is real valued, i.e., $n=1$.

First note that Lemma~\ref{l3} yields  $g\circ u,\, g\circ u^k\in \Lw([a,b],\setR)$.
We prove \eref{e313} in three steps. 

(i)
First suppose that $g\in\Lw([c,d],\setR)$ is the characteristic function of an
interval $[e,f]\subset[c,d]$, i.e., $g=\chi_{[e,f]}$.
Then $|\chi_{[e,f]}(u^k(s))-\chi_{[e,f]}(u(s))|$ is either 0 or 1,  hence
$$
\mathop{meas}(\{s\in[a,b]\st \chi_{[e,f]}(u^k(s))\neq \chi_{[e,f]}(u(s))\})
\leq 4|u^k-u|_{C([a,b],\setR)},
$$
and so
$$
\int_a^b |\chi_{[e,f]}(u^k(s))-\chi_{[e,f]}(u(s))|\,ds\leq 4|u^k-u|_{C([a,b],\setR)}\to0,
\qquad \mbox{as\ } k\to\infty.
$$

(ii) Suppose $g$ is a step function, i.e., $g=\sum_{i=1}^m c_i\chi_{A_i}$,
where $A_i$ are pairwise disjoint intervals with $\cup_{i=1}^m A_i=[c,d]$.
Then
$$
\int_a^b |g(u^k(s))-g(u(s))|\,ds
\leq \sum_{i=1}^m |c_i|4|u^k-u|_{C([a,b],\setR)}
\to0,\qquad \mbox{as\ } k\to\infty.
$$
 
(iii) Let $a=t_0<t_1<\cdots<t_m=b$ be the mesh points of $u$ from the Definition~\ref{d1}, and 
let $0<\epsilon<\min\{t_{i+1}-t_i\st i=0,\ldots,m-1\}/2$ be fixed, and introduce  $t_i':=t_i+\epsilon$ for
$i=0,\ldots,m-1$ and $t_i'':=t_i-\epsilon$ for $i=1,\ldots,m$, $t_0'':=a$, $t_m':=b$, and let
\begin{equation}\label{e369}
M:=\min_{i=0,\ldots,m-1}\essinf_{t\in[t_i',t_{i+1}'']} |\dot u(t)|.
\end{equation}
We have $M>0$, since $u\in{\cal PM}([a,b],[c,d])$.

 The set of step functions is dense in $L^1([c,d],\setR)$ (see, e.g., \cite{C}), so for a fixed 
$g\in\Lw([c,d],\setR)$ and $0<\delta<\epsilon M/m$ there exists a step function $h\st[c,d]\to\setR$ 
such that
$|g-h|_{L^1([c,d],\setR)}<\delta$. Let $h=\sum_{i=1}^m c_i\chi_{A_i}$, 
where $A_i$ are pairwise disjoint intervals with $\cup_{i=1}^m A_i=[c,d]$,
and define $h^*:=\sum_{i=1}^m c_i^*\chi_{A_i}$, where
$$
c_i^*:=\left\{\begin{array}{ll}
c_i,\quad & \mbox{if } |c_i|\leq |g|_{\Lw([c,d],\setR)}+1,\\
|g|_{\Lw([c,d],\setR)}, & \mbox{if } c_i> |g|_{\Lw([c,d],\setR)}+1,\\
-|g|_{\Lw([c,d],\setR)}, & \mbox{if } c_i< -|g|_{\Lw([c,d],\setR)}-1.\\
\end{array}
\right.
$$
Then it is easy to check that $|g(v)-h^*(v)|\leq  1$
for a.e.\ $v\in[c,d]$, and
$$
\int_c^d |g(v)-h^*(v)|\,dv\leq \int_c^d |g(v)-h(v)|\,dv<\delta.
$$
We have therefore
\begin{eqnarray*}
\balra{1cm}{\int_a^b |g(u(s))-h^*(u(s))|\,ds}\nl
&=&\sum_{i=0}^{m} \int_{t_i''}^{t_i'} |g(u(s))-h^*(u(s))|\,ds+\sum_{i=0}^{m-1} \int_{t_i'}^{t_{i+1}''} |g(u(s))-h^*(u(s))|\,ds\nl
&\leq& 2\epsilon (m+1)+\sum_{i=0}^{m-1} \int_{t_i'}^{t_{i+1}''}  |g(u(s))-h^*(u(s))|\dot u(s)\frac 1{\dot u(s)}\,ds\nl
&\leq& 2\epsilon (m+1)+\frac1M\sum_{i=0}^{m-1}\left| \int_{u(t_i')}^{u(t_{i+1}'')}  |g(v)-h^*(v)|\,dv\right|\nl
&\leq& 2\epsilon (m+1)+\frac{\delta m}M\nl
&\leq& (2m+3)\epsilon.
\end{eqnarray*}
Assumption \eref{e312} yields that there exist $k_0>0$ such that $|u^k-u|_{\Ww([a,b],\,\setR)}<\frac M2$ for $k\geq k_0$. 
Then for $k\geq k_0$ it follows $|\dot u^k(s)|\geq \frac M2$ for a.e.\ $s\in[t_i',t_{i+1}'']$ and $i=0,\ldots,m-1$.
Therefore similarly to the previous estimate 
 we have for $k\geq k_0$
$$
\int_a^b |g(u^k(s))-h^*(u^k(s))|\,ds\leq 2\epsilon (m+1)+\frac{2\delta m} M\leq (2m+4)\epsilon.
$$
Using the above inequalities we get 
\begin{eqnarray*}
\balra{1cm}{\int_a^b |g(u^k(s))-g(u(s))|\,ds }\nl
&\leq& \int_a^b |g(u^k(s))-h^*(u^k(s))|\,ds+\int_a^b |h^*(u^k(s))-h^*(u(s))|\,ds\nl
&&+\int_a^b |g(u(s))-h^*(u(s))|\,ds\nl
&\leq& (4m+7)\epsilon +\int_a^b |h^*(u^k(s))-h^*(u(s))|\,ds,\qquad k\geq k_0,
\end{eqnarray*}
which yields \eref{e313} using part (ii), since $\epsilon>0$ is arbitrary close to 0.
\end{proof}
\pagebreak[3]\medskip

\begin{lemma}\label{l21}
 Suppose $f^{k,h}\in \Lw([c,d],\setR^n)$ for $k\in\setN$ and $h\in H$ for some fixed parameter set $H$,
$$
\lim_{k\to\infty}\sup_{h\in H}\int_c^d |f^{k,h}(s)|\,ds=0,
$$
and there exists $A\geq0$ such that $|f^{k,h}(s)|\leq A$ for $k\in\setN$, $h\in H$ and a.e.\ $s\in[c,d]$.
Let $u, u^k\in{\cal PM}([a,b],[c,d])$ ($k\in\setN$)
be such that \eref{e312} holds.
Then
$$
\lim_{k\to\infty}\sup_{h\in H}\int_a^b |f^{k,h}(u^k(s))|\,ds=0.
$$
\end{lemma}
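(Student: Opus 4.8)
The plan is to follow the scheme of part~(iii) of the proof of Lemma~\ref{l4}, with the uniform $L^1$-smallness of the family $f^{k,h}$ playing the role that the step-function approximation of $g$ played there. First I would note that, since $u^k\in{\cal PM}([a,b],[c,d])$, Lemma~\ref{l3} guarantees $f^{k,h}\circ u^k\in\Lw([a,b],\setR^n)$ with $|f^{k,h}\circ u^k|_{\Lw([a,b],\setR^n)}\le|f^{k,h}|_{\Lw([c,d],\setR^n)}\le A$; in particular all the integrals below are well defined, and the part of $\int_a^b|f^{k,h}(u^k(s))|\,ds$ coming from small neighbourhoods of the mesh points of $u$ is controlled by $A$ times the total length of those neighbourhoods.

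Concretely, fix a mesh $a=t_0<t_1<\cdots<t_m=b$ of $[a,b]$ as in Definition~\ref{d1} for $u$, let $0<\epsilon<\min_i\{t_{i+1}-t_i\}/2$, put $t_i':=t_i+\epsilon$ ($i=0,\dots,m-1$), $t_i'':=t_i-\epsilon$ ($i=1,\dots,m$), $t_0'':=a$, $t_m':=b$, and let $M$ be the constant \eref{e369}, so $M>0$. Splitting
$$
\int_a^b|f^{k,h}(u^k(s))|\,ds=\sum_{i=0}^{m}\int_{t_i''}^{t_i'}|f^{k,h}(u^k(s))|\,ds+\sum_{i=0}^{m-1}\int_{t_i'}^{t_{i+1}''}|f^{k,h}(u^k(s))|\,ds,
$$
the first sum is at most $2A(m+1)\epsilon$ by the essential bound. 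For the interior terms, choose $k_0$ with $|u^k-u|_{\Ww([a,b],\setR)}<M/2$ for $k\ge k_0$; then on each $[t_i',t_{i+1}'']$ we have $|\dot u^k|\ge M/2$ a.e.\ and $\dot u^k$ has the same constant sign as $\dot u$, so the restriction of $u^k$ to $[t_i',t_{i+1}'']$ (or its negative), which maps into $[c,d]$, lies in $\calA(M/2)$. By Lemma~\ref{l1} together with Remark~\ref{r1},
$$
\int_{t_i'}^{t_{i+1}''}|f^{k,h}(u^k(s))|\,ds\le\frac2M\int_c^d|f^{k,h}(v)|\,dv,\qquad k\ge k_0,
$$
and therefore $\int_a^b|f^{k,h}(u^k(s))|\,ds\le 2A(m+1)\epsilon+\frac{2m}{M}\int_c^d|f^{k,h}(v)|\,dv$ for every $h\in H$ and $k\ge k_0$.

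Finally I would take $\sup_{h\in H}$ and then $\limsup_{k\to\infty}$ in this inequality. Since $m$ depends only on $u$ and the hypothesis gives $\sup_{h\in H}\int_c^d|f^{k,h}(v)|\,dv\to0$, this yields $\limsup_{k\to\infty}\sup_{h\in H}\int_a^b|f^{k,h}(u^k(s))|\,ds\le 2A(m+1)\epsilon$, and letting $\epsilon\downarrow0$ gives the claim. The only delicate point I expect is the same as in Lemma~\ref{l4}: the interplay of the two limits (first $k\to\infty$ for a fixed mesh refinement $\epsilon$, then $\epsilon\to0$, keeping in mind that $M=M(\epsilon)$ while $m$ stays fixed), together with checking that $\dot u^k$ inherits the sign of $\dot u$ on the interior subintervals so that Lemma~\ref{l1}/Remark~\ref{r1} is applicable.
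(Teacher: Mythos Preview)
Your proof is correct and follows essentially the same approach as the paper's own proof: the same mesh splitting of $[a,b]$ from Definition~\ref{d1}, the same use of the uniform bound $A$ on the $\epsilon$-neighbourhoods of the mesh points, the same choice of $k_0$ so that $|\dot u^k|\ge M/2$ on the interior subintervals, and the same Lemma~\ref{l1}/Remark~\ref{r1} change-of-variables estimate giving the $\frac{2m}{M}\int_c^d|f^{k,h}|$ term. Your explicit remark about $\dot u^k$ inheriting the sign of $\dot u$ on each $[t_i',t_{i+1}'']$ is a slight elaboration of what the paper leaves implicit, but otherwise the arguments coincide.
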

\begin{proof}
Let $a=t_0<t_1<\cdots<t_m=b$ be the mesh points of $u$ from the Definition~\ref{d1}, and 
let $0<\epsilon<\min\{t_{i+1}-t_i\st i=0,\ldots,m-1\}/2$ be fixed,  let $t_i'$ and $t_i''$
be defined as in the proof of Lemma~\ref{l4},
 and let
$M$ be defined by \eref{e369}. Let $k_0$ be such that $|u^k-u|_{\Ww([a,b],\,\setR)}\leq M/2$ for $k\geq k_0$.
Then for $k\geq k_0$ it follows $|\dot u^k(s)|\geq \frac M2$ for a.e.\ $s\in[t_i',t_{i+1}'']$ and $i=0,\ldots,m-1$.
Since $u^k\in{\cal PM}([a,b],[c,d])$, it follows from Lemma~\ref{l3} that $|f^{k,h}(u^k(s))|\leq A$ for $k\in\setN$, $h\in H$ 
and a.e.\ $s\in[a,b]$.
Therefore for any $k\in\setN$ and $h\in H$ we have
\begin{eqnarray*}
 \int_a^b |f^{k,h}(u^k(s))|\,ds
&=& \sum_{i=0}^{m} \int_{t_i''}^{t_i'} |f^{k,h}(u^k(s))|\,ds+\sum_{i=0}^{m-1} \int_{t_i'}^{t_{i+1}''}|f^{k,h}(u^k(s))|\,ds\nl
&\leq& (m+1)A 2\epsilon+\frac {2m}M\int_c^d |f^{k,h}(s)|\,ds.
\end{eqnarray*}
Then
$$
\sup_{h\in H}\int_a^b |f^{k,h}(u^k(s))|\,ds\leq (m+1)A 2\epsilon+\sup_{h\in H}\frac {2m}M\int_c^d |f^{k,h}(s)|\,ds,
$$
which proves the statement, since $\epsilon$ is arbitrarily close to 0.
\end{proof}

\medskip

\section{Well-posedness and continuous dependence on parameters}\label{sec_DDE_wp}
\setcounter{equation}0
\setcounter{theorem}0

In this section we list all the assumptions we need later on the \IVP{e1}{e2}, and show
some basic results including the well-posedness of the IVP and Lipschitz continuous dependence
of the solutions on the parameters $\phi$, $\theta$ and $\gamma$.

Suppose
 $\Omega_1\subset C$, $\Omega_2\subset\setR^n$, $\Omega_3\subset\Theta$, $\Omega_4\subset\Xi$
are open subsets of the respective spaces. $T>0$ is finite or $T=\infty$,
in which case $[0,T]$ denotes the interval $[0,\infty)$.

We assume
\begin{itemize}
\item[(A1)]  
\begin{itemize}
\item[(i)] $f\, :\, \setR\times C\times\setR^n\times\Theta\supset [0,T]\times \Omega_1\times\Omega_2\times\Omega_3\to\setR^n$ is continuous; 
\item[(ii)] $f(t,\psi,u,\theta)$ is locally Lipschitz continuous in $\psi$, $u$ and $\theta$, i.e.,
	 for every finite 
	$\alpha\in(0,T]$, 
	for every closed subset $M_1\subset \Omega_1$ of $C$ which is also a bounded subset of $\Ww$,	 
	compact subset $M_2\subset \Omega_2$ of $\setR^n$,  
	and closed and bounded subset 
	$M_3\subset \Omega_3$ of $\Theta$ 
	there exists a constant $L_1=L_1(\alpha,M_1,M_2,M_3)$ such that
	$$
	|f(t,\psi,u,\theta)-f(t,\bar \psi,\bar u,\bar\theta)|
	\leq L_1\Bigl(|\psi-\bar \psi|_C+|u-\bar u|+|\theta-\bar\theta|_\Theta\Bigr),
	$$
	for $t\in[0,\alpha]$, $\psi, \bar \psi\in M_1$, $u, \bar u\in M_2$ and $\theta,\bar\theta\in M_3$;
\item[(iii)] 
	$f\,:\,\setR\times C\times\setR^n\times\Theta\supset [0,T]\times\Omega_1\times\Omega_2\times\Omega_3
	\to\setR^n$  
	is continuously differentiable wrt its second, third and fourth	arguments;
\item[(iv)] $f(t,\psi,u,\theta)$ is locally Lipschitz continuous wrt $t$, i.e., 
 for every finite $\alpha\in(0,T]$, 
	for every closed subset $M_1\subset \Omega_1$ of $C$ which is also a bounded subset of $\Ww$,	 
	compact subset $M_2\subset \Omega_2$ of $\setR^n$,  
	and closed and bounded subset 
	$M_3\subset \Omega_3$ of $\Theta$ 
	there exists a constant $L_1=L_1(\alpha,M_1,M_2,M_3)$ such that
	$$
	|f(t,\psi,u,\theta)-f(\bar t, \psi,u,\theta)|
	\leq L_1|t-\bar t|
	$$
	for $t,\bar t\in[0,\alpha]$, $\psi\in M_1$, $u\in M_2$ and $\theta\in M_3$;
\item[(v)] $D_2f$, $D_3f$ and $D_4f$ are locally Lipschitz continuous wrt all of their 
    arguments, i.e., for every finite $\alpha\in(0,T]$, 
	for every closed subset $M_1\subset \Omega_1$ of $C$ which is also a bounded subset of $\Ww$,	 
	compact subset $M_2\subset \Omega_2$ of $\setR^n$,  
	and closed and bounded subset 
	$M_3\subset \Omega_3$ of $\Theta$ there exists $L_3=L_3(\alpha,M_1,M_2,M_3)$ such that
	$$
	|D_if(t,\psi,u,\theta)-D_if(\bar t, \bar \psi,\bar u,\bar \theta)|_{\calL(Y_i,\,\setR^n)} 
	\leq L_3\Bigl(|t-\bar t|+|\psi-\bar \psi|_C+|u-\bar u|+|\theta-\bar\theta|_\Theta\Bigr)
	$$
	for $i=2,3,4$, $t,\bar t\in[0,\alpha]$, $\psi,\bar \psi\in M_1$, $u,\bar u\in M_2$ and $\theta,\bar\theta\in M_3$,
        where $Y_2:=C$, $Y_3:=\setR^n$ and $Y_4:=\Theta$;
\item[(vi)] $D_2f$, $D_3f$ and $D_4f$ are continuously differentiable wrt  their 
    second, third and fourth arguments on $[0,T]\times\Omega_1\times\Omega_2\times\Omega_3$;
\end{itemize}
\item[(A2)] 
\begin{itemize}
\item[(i)] $\tau\, :\, \setR\times C\times\Xi\supset[0,T]\times \Omega_1\times\Omega_4\to[0,r]\subset\setR$ is 
	continuous; 
\item[(ii)] $\tau(t,\psi,\xi)$ is locally Lipschitz continuous in 
	$\psi$ and $\xi$ in the following sense:  
	for every finite $\alpha\in(0,T]$, closed subset $M_1\subset \Omega_1$ of $C$ 
	which is also a bounded subset of $\Ww$, and closed and bounded subset $M_4\subset\Omega_4$
	 of $\Xi$ 
	there exists a constant $L_2=L_2(\alpha,M_1,M_4)$ such that
$$
|\tau(t,\psi,\xi)-\tau(t,\bar\psi,\bar\xi)| \leq L_2\Bigl( |\psi-\bar\psi|_C+|\xi-\bar\xi|_\Xi\Bigr)
$$
	for  $t\in[0,\alpha]$, $\psi,\bar\psi\in M_1$, $\xi,\bar\xi\in M_4$;
\item[(iii)] $\tau\, :\, [0,T]\times C\times\Xi\supset [0,T]\times\Omega_1\times\Omega_4
	\to \setR$ is continuously 
	differentiable wrt its second and third arguments;
\item[(iv)] $\tau(t,\psi,\xi)$ is locally Lipschitz continuous in 
	$t$, i.e.,  
	for every finite $\alpha\in(0,T]$, closed subset $M_1\subset \Omega_1$ of $C$ 
	which is also a bounded subset of $\Ww$, and closed and bounded subset $M_4\subset\Omega_4$
	 of $\Xi$ 
	there exists a constant $L_2=L_2(\alpha,M_1,M_4)$ such that
$$
|\tau(t,\psi,\xi)-\tau(\bar t,\psi,\xi)| \leq L_2|t-\bar t|
$$
	for  $t,\bar t\in[0,\alpha]$, $\psi\in M_1$, $\xi\in M_4$;
\item[(v)]  for every finite $\alpha\in(0,T]$, closed subset $M_1\subset \Omega_1$ of $C$ 
	which is also a bounded subset of $\Ww$, and closed and bounded subset $M_4\subset\Omega_4$
	 of $\Xi$  there exists $L_4=L_4(\alpha,M_1,M_4)\geq0$ such that 
$$
\Bigl| \frac d{dt} \tau(t,y_t,\xi)-\frac d{dt}\tau(t,\bar y_t,\bar \xi)\Bigr|\leq L_4\Bigl(|y_t-\bar y_t|_\Ww+|\xi-\bar \xi|_\Xi\Bigr),
\quad \mbox{a.e. } t\in[0,\alpha],
$$
where $\xi,\bar\xi\in M_4$, and  $y,\bar y\in\Ww([-r,\alpha],\setR^n)$ are such that
$y_t,\bar y_t\in M_1$ for $t\in[0,\alpha]$;
\item[(vi)] $D_2\tau$ and $D_3\tau$ are  locally Lipschitz continuous wrt all arguments, i.e.,  
	for every finite $\alpha\in(0,T]$, closed subset $M_1\subset \Omega_1$ of $C$ 
	which is also a bounded subset of $\Ww$, and closed and bounded subset $M_4\subset\Omega_4$
	 of $\Xi$ 
	there exists a constant $L_5=L_5(\alpha,M_1,M_4)$ such that
$$
|D_i\tau(t,\psi,\xi)-D_i\tau(\bar t,\bar \psi,\bar \xi)|_{\calL(Z_i,\,\setR)} \leq L_5\Bigl(|t-\bar t|+|\psi-\bar\psi|_C+|\xi-\bar\xi|_\Xi\Bigr)
$$
	for  $i=2,3$, $t,\bar t\in[0,\alpha]$, $\psi,\bar\psi\in M_1$, $\xi,\bar\xi\in M_4$, 
      where  $Z_2:=C$ and $Z_3:=\Xi$;
\item[(vii)] $D_2\tau$ and $D_3\tau$ are  continuously 
	differentiable wrt their second and third arguments on $[0,T]\times\Omega_1\times\Omega_4$;
\item[(viii)]  for every finite $\alpha\in(0,T]$, 
	for every closed subset $M_1\subset \Omega_1$ of $C$ which is also a bounded subset of $\Ww$,	 
	compact subset $M_2\subset \Omega_2$ of $\setR^n$,  
	and closed and bounded subsets 
	$M_3\subset \Omega_3$ of $\Theta$ and $M_4\subset\Omega_4$
	 of $\Xi$ there exists $L_6=L_6(\alpha,M_1,M_2,M_3,M_4)$ such that 
\begin{eqnarray*}
\balra{1cm}{\Bigl| \frac d{dt} f(t,y_t,y(t-\tau(t,y_t,\xi)), \theta)
-\frac d{dt}f(t,\bar y_t,\bar y(t-\tau(t,\bar y_t,\bar\xi)),\bar \theta)\Bigr|}\nl
&\leq& L_6\Bigl(|y_t-\bar y_t|_\Ww+|\xi-\bar \xi|_\Xi+|\theta-\bar \theta|_\Xi\Bigr),
\quad \mbox{a.e. } t\in[0,\alpha],
\end{eqnarray*}
where $\theta,\bar\theta\in M_3$, $\xi,\bar\xi\in M_4$, and  $y,\bar y\in\Ww([-r,\alpha],\setR^n)$ are such that
$y_t,\bar y_t\in M_1$ for $t\in[0,\alpha]$.
\end{itemize}
\end{itemize}

We introduce the parameter space
$$
\Gamma:=\Ww\times\Theta\times\Xi
$$
equipped with the product norm $|\gamma|_\Gamma:=|\phi|_\Ww+|\theta|_\Theta+|\xi|_\Xi$ for
$\gamma=(\phi,\theta,\xi)\in\Gamma$, and the set of admissible parameters
$$
\Pi:=\Bigl\{(\phi,\theta,\xi)\in\Gamma\st 
\phi\in\Omega_1,\ \phi(-\tau(0,\phi))\in\Omega_2,\ \theta\in\Omega_3,\ \xi\in\Omega_4\Bigr\}.
$$
The next theorem shows that every admissible parameter $(\hat\phi,\hat\theta,\hat\xi)\in\Pi$ has a neighborhood
$P$ and there exists a constant $\alpha>0$ such that  the \IVP{e1}{e2} has a unique solution on 
$[-r,\alpha]$ corresponding to all parameters $\gamma=(\phi,\theta,\xi)\in P$.
 This solution will be denoted by $x(t,\gamma)$,
and its  segment function at $t$ is denoted by $x_t(\cdot,\gamma)$.

The well-posedness of several classes of SD-DDEs was studied in many papers (see, e.g.,
\cite{D,HKWW,HT,Sl1,W2,W3}. The next result is a variant of a result from \cite{Ha4} where 
 the initial time  is also considered as a parameter, but the parameters $\theta$ and $\xi$ were missing in the equation. 
The proof is similar to that of Theorem 3.1 in \cite{Ha4}, (see also the analogous proof of Theorem~3.2
of the neutral case in \cite{Ha7}), therefore it is omitted here.
The notations and estimates
introduced in the next theorem will be essential in the following sections. 

\medskip
\begin{theorem}\label{t1}
Assume (A1) (i), (ii), (A2) (i), (ii), 
 and let $\hat\gamma\in\Pi$. 
Then there exist $\delta>0$ and  $0<\alpha\leq T$ 
finite numbers such that 
\begin{itemize}
\item[(i)]  for all $\gamma=(\phi,\theta,\xi)\in P:=\ballp{\Gamma}{\hat\gamma}{\delta}$ the
\IVP{e1}{e2}
has a unique solution $x(t,\gamma)$ on $[-r,\alpha]$;  
\item[(ii)] there exist a closed subset $M_1\subset C$ which is also a bounded and convex subset of $\Ww$, 
$M_2\subset\setR^n$  compact and convex subset 
and $M_3\subset\Theta$, $M_4\subset\Xi$  closed, bounded and convex subsets
of the respective spaces such that 
$ x_t(\cdot,\gamma)\in M_1$,  $x(t-\tau(t,x_t(\cdot,\gamma),\xi),\gamma)\in M_2$, $\theta\in M_3$
and $\xi\in M_4$ 
for $\gamma=(\phi,\theta,\xi)\in P$ and $t\in[0,\alpha]$; and
\item[(iii)]  $x_t(\cdot,\gamma)\in\Ww$ 
for $\gamma\in P$ and $t\in[0,\alpha]$, and 
there exist constants $N=N(\alpha,\delta)$ and $L=L(\alpha,\delta)$ such that
\begin{equation}\label{e37}
| x_t(\cdot,\gamma)|_\Ww\leq N,\qquad \gamma\in P,\ t\in[0,\alpha],
\end{equation}
and
\begin{equation}\label{e25}
|x_t(\cdot,\gamma)-x_t(\cdot,\bar\gamma)|_{\Ww}\leq 
	L|\gamma-\bar\gamma|_\Gamma,\qquad \gamma\in P,\ t\in[0,\alpha].
\end{equation}
\end{itemize}
\end{theorem}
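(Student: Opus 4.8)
The plan is to cast the \IVP{e1}{e2} as a fixed-point problem on a set of Lipschitz functions, solve it by a contraction argument that is uniform in the parameter $\gamma$ in a small ball around $\hat\gamma$, and then read off the bounds \eref{e37} and \eref{e25} from Gronwall's lemma (Lemma~\ref{l0}). First I would localize the data near the base point: write $\hat\gamma=(\hat\phi,\hat\theta,\hat\xi)$ and $\hat u:=\hat\phi(-\tau(0,\hat\phi,\hat\xi))$, and use openness of $\Omega_1,\dots,\Omega_4$ to pick radii $\rho_i>0$ with $\cballp{C}{\hat\phi}{\rho_1}\subset\Omega_1$, $\cballp{\setR^n}{\hat u}{\rho_2}\subset\Omega_2$, $\cballp{\Theta}{\hat\theta}{\rho_3}\subset\Omega_3$, $\cballp{\Xi}{\hat\xi}{\rho_4}\subset\Omega_4$. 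By continuity of $f$ at $(0,\hat\phi,\hat u,\hat\theta)$ there is $K>0$ with $|f|\le K$ on the product of these closed balls with $[0,\min\{1,T\}]$ (shrinking the $\rho_i$ if needed), and (A1)(ii), (A2)(ii) give Lipschitz constants $L_1,L_2$ on these sets, which may be taken independent of $\alpha\le 1$. Put $N:=|\hat\phi|_{\Ww}+K+\max_i\rho_i$, an a-priori bound for the sup-norm and the Lipschitz constant of any candidate solution.

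Next, the fixed point. Extend $\phi$ to $[-r,\alpha]$ by $\tilde\phi(t):=\phi(\min\{t,0\})$, and for $\alpha\in(0,\min\{1,T\})$ and $\delta\in(0,\min_i\rho_i)$ (to be fixed later) put $P:=\ballp{\Gamma}{\hat\gamma}{\delta}$ and, for $\gamma=(\phi,\theta,\xi)\in P$, consider
$$
\mathcal{S}_\gamma:=\Bigl\{z\in C([-r,\alpha],\setR^n)\st z|_{[-r,0]}\equiv 0,\ |z(t)-z(s)|\le K|t-s|\ \ (t,s\in[0,\alpha]),\ |z|_{C([-r,\alpha],\setR^n)}\le K\alpha\Bigr\},
$$
a nonempty closed subset of $C([-r,\alpha],\setR^n)$, together with $\mathcal{T}_\gamma z:=0$ on $[-r,0]$ and
$$
\mathcal{T}_\gamma z(t):=\int_0^t f\bigl(s,\tilde\phi_s+z_s,(\tilde\phi+z)(s-\tau(s,\tilde\phi_s+z_s,\xi)),\theta\bigr)\,ds,\qquad t\in[0,\alpha].
$$
For $\alpha,\delta$ small enough one checks, using that $\tilde\phi+z$ is Lipschitz with constant $\le N$ and that $\tau$ is continuous, that for $z\in\mathcal{S}_\gamma$ all arguments of $f$ lie in $\cballp{C}{\hat\phi}{\rho_1}\times\cballp{\setR^n}{\hat u}{\rho_2}\times\cballp{\Theta}{\hat\theta}{\rho_3}$; hence $\mathcal{T}_\gamma$ is well defined and maps $\mathcal{S}_\gamma$ into itself ($|\mathcal{T}_\gamma z|\le K\alpha$, Lipschitz constant $\le K$). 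The crucial point is contractivity: with $\tau_1:=\tau(s,\tilde\phi_s+z_s,\xi)$, $\bar\tau_1:=\tau(s,\tilde\phi_s+\bar z_s,\xi)$ and the splitting
$$
(\tilde\phi+z)(s-\tau_1)-(\tilde\phi+\bar z)(s-\bar\tau_1)=\bigl[(\tilde\phi+z)(s-\tau_1)-(\tilde\phi+z)(s-\bar\tau_1)\bigr]+\bigl[z-\bar z\bigr](s-\bar\tau_1),
$$
the first bracket is $\le N L_2|z_s-\bar z_s|_C$ by (A2)(ii) and the $N$-Lipschitz bound, the second $\le|z-\bar z|_{C([-r,\alpha],\setR^n)}$, so (A1)(ii) yields $|\mathcal{T}_\gamma z(t)-\mathcal{T}_\gamma\bar z(t)|\le L_1(2+NL_2)\,t\,|z-\bar z|_{C([-r,\alpha],\setR^n)}$; choosing $\alpha$ with $L_1(2+NL_2)\alpha<1$ makes $\mathcal{T}_\gamma$ a contraction. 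Its fixed point $z^\gamma$ gives $x(\cdot,\gamma):=\tilde\phi+z^\gamma$, a solution of \IVP{e1}{e2} on $[-r,\alpha]$, with $\alpha,\delta$ depending only on $\hat\gamma$. Any solution is automatically $C^1$ on $[0,\alpha]$ with $|\dot x|=|f|\le K$, hence $x-\tilde\phi\in\mathcal{S}_\gamma$ and equals $z^\gamma$, giving uniqueness; this is (i).

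For (ii) and \eref{e37} I would take $M_1:=\cballp{C}{\hat\phi}{\rho_1}\cap\{\psi\in\Ww\st|\psi|_{\Ww}\le N\}$, $M_2:=\cballp{\setR^n}{\hat u}{\rho_2}$, $M_3:=\cballp{\Theta}{\hat\theta}{\delta}$, $M_4:=\cballp{\Xi}{\hat\xi}{\delta}$. Each is convex and contained in the corresponding $\Omega_i$; $M_2,M_3,M_4$ are closed and bounded ($M_2$ also compact), and $M_1$ is bounded in $\Ww$ and closed in $C$ since a $C$-limit of Lipschitz functions with a common constant is again Lipschitz with that constant. Shrinking $\alpha,\delta$ once more so the excursion estimates used above stay below $\rho_1$ and $\rho_2$, one gets $x_t(\cdot,\gamma)\in M_1$ and $x(t-\tau(t,x_t(\cdot,\gamma),\xi),\gamma)\in M_2$ for $t\in[0,\alpha]$, while $\theta\in M_3$, $\xi\in M_4$ trivially; and \eref{e37} holds with this $N$ because $|\dot x(t)|=|f(\dots)|\le K$ on $[0,\alpha]$ and $|\dot\phi|_\Lw\le|\hat\phi|_\Ww+\delta$ on $[-r,0]$.

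Finally, for \eref{e25}, let $\gamma,\bar\gamma\in P$ and $v(t):=|x(t,\gamma)-x(t,\bar\gamma)|$. Subtracting the two integral equations on $[0,\alpha]$, estimating by (A1)(ii) and splitting the state-dependent delay term as before (which now additionally produces $L_1|\theta-\bar\theta|_\Theta$ via (A1)(ii) and $NL_2|\xi-\bar\xi|_\Xi$ via (A2)(ii)), one obtains
$$
v(t)\le a+L_1(2+NL_2)\int_0^t|v_s|_C\,ds,\qquad a:=|\phi-\bar\phi|_C+L_1\alpha\bigl(NL_2|\xi-\bar\xi|_\Xi+|\theta-\bar\theta|_\Theta\bigr),
$$
with $a\ge|v_0|_C$, so Lemma~\ref{l0} gives $|x_t(\cdot,\gamma)-x_t(\cdot,\bar\gamma)|_C=|v_t|_C\le a\,e^{L_1(2+NL_2)\alpha}$. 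Feeding this back into $\dot x(t)-\dot{\bar x}(t)=f(\dots)-f(\dots)$ bounds $|\dot x_t-\dot{\bar x}_t|_\Lw$ on $[0,\alpha]$ by a constant times $|\gamma-\bar\gamma|_\Gamma$ (on $[-r,0]$ it is $\le|\phi-\bar\phi|_\Ww$), and taking the maximum yields \eref{e25}. The one genuinely delicate point throughout is the state-dependent delay: one cannot work on plain $C$ but must stay inside a Lipschitz class with a controlled constant, and must systematically split the composite term $x(\cdot-\tau(\cdot,x_\cdot,\xi))$ into a ``shift of the argument'' part, absorbed by the uniform Lipschitz bound $N$, and a ``difference of functions'' part; the rest is nested-neighbourhood bookkeeping plus Gronwall.
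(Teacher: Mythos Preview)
The paper does not actually prove Theorem~\ref{t1}: it states that ``the proof is similar to that of Theorem~3.1 in \cite{Ha4}'' and omits it. Your proposal is the standard route for such results --- a contraction argument on a set of uniformly Lipschitz functions (so that the composite $x(t-\tau(t,x_t,\xi))$ can be controlled via the splitting you describe), followed by Gronwall for the parameter-Lipschitz estimate --- and this is precisely the method used in the cited references \cite{Ha4,Ha7,HT}. The outline is correct and matches what the paper relies on; the only parts you have compressed (verifying that $\tilde\phi_s+z_s$ stays in the closed ball in $C$ and that the delayed argument stays in $M_2$ for small $\alpha,\delta$) are routine nested-neighbourhood checks, as you say.
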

\bigskip

The following result is obvious. 

\begin{remark}
Suppose the conditions of Theorem~\ref{t1} hold, 
 $P$ and $\alpha$ are defined by Theorem~\ref{t1}, and let $\calP$ denote 
the subset of $P$ consisting of those parameters which satisfy the compatibility
condition, i.e.,
\begin{equation}\label{e100}
\calP:=\Bigl\{(\phi,\theta,\xi)\in P\st \phi\in C^1,\quad
 \dot\phi(0-)=f(0,\phi,\phi(-\tau(0,\phi,\xi)),\theta)\Bigr\}.
\end{equation}
Then for
all parameter values $\gamma\in\calP$ the corresponding solution
 $x(t,\gamma)$ is continuously differentiable wrt $t$ for $t\in[-r,\alpha]$. 
\end{remark}
\bigskip

Throughout the rest of the paper we will use the following notations.
The parameter  $\hat\gamma\in\Pi$ is fixed, and   the constants $\delta>0$, 
$0<\alpha\leq T$ are defined by Theorem~\ref{t1},
and let $P:= \ballp{\Gamma}{\hat\gamma}{\delta}$. 
The sets $M_1\subset C$,
$M_2\subset\setR^n$, $M_3\subset\Theta$ and $M_4\subset\Xi$ are defined by 
Theorem~\ref{t1} (ii), $L_1=L_1(\alpha,M_1,M_2,M_3)$, 
$L_2=L_2(\alpha,M_1,M_4)$ and  $L_4=L_4(\alpha,M_1,M_4)$ 
denote the corresponding Lipschitz constants from (A1) (ii),  (A2) (ii) and (A2) (iv),
respectively, and the constants $N=N(\alpha,\delta)$ and 
$L=L(\alpha,\delta)$ are defined by Theorem~\ref{t1} (iii).  
We will restrict our attention to the fixed parameter set $P$,
so the sets $M_1, M_2, M_3$ and $M_4$, and the constants $L_1,L_2,L_4,L$ and $N$
can be considered to be fixed throughout this paper. 
\medskip

\begin{lemma}\label{l11}
 Assume (A1) (i), (ii), (A2) (i),(ii),
$\gamma=(\phi,\xi,\theta)\in P$, $h_k=(h^\phi_k,h^\xi_k,h^\theta_k)\in \Gamma$ is a sequence
such that $\gamma+h_k\in P$ for $k\in\setN$ and $|h_k|_\Gamma\to0$ as
$k\to\infty$. Let $x(t):=x(t,\gamma)$, $x^k(t):=x(t,\gamma+h_k)$ be the
corresponding solutions of the \IVP{e1}{e2}, and $u^k(s):=t-\tau(t,x^k_t,\xi+h^\xi_k)$
and $u(t):=t-\tau(t,x_t,\xi)$.
Then there exists $K_0\geq0$ such that
\begin{equation}\label{e88}
|u^k(t)-u(t)|\leq K_0|h_k|_\Gamma,\qquad t\in[0,\alpha],\quad k\in\setN.
\end{equation}
If, in addition, (A2) (iv) holds, 
then $u,u^k\in\Ww([0,\alpha],\setR)$, and  if (A2) (v) is also satisfied, then
there exists $K_1\geq0$ such that
\begin{equation}\label{e332}
|u^k-u|_{\Ww([0,\alpha],\setR)}\leq K_1|h_k|_\Gamma,
\qquad k\in\setN.
\end{equation}
\end{lemma}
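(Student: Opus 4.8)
The plan is to estimate the two pieces of $u^k-u$ separately, namely the value part $|u^k(t)-u(t)|$ and the derivative part $|\dot u^k(t)-\dot u(t)|$ for a.e.\ $t$, and then combine them via the definition of the $\Ww$-norm. For the value estimate \eref{e88}, I would write
$$
u^k(t)-u(t) = -\bigl(\tau(t,x^k_t,\xi+h^\xi_k)-\tau(t,x_t,\xi)\bigr),
$$
and apply the local Lipschitz property (A2) (ii) together with Theorem~\ref{t1} (ii) (which guarantees $x_t,x^k_t\in M_1$, $\xi,\xi+h^\xi_k\in M_4$, so that $L_2=L_2(\alpha,M_1,M_4)$ applies). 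This gives
$$
|u^k(t)-u(t)|\leq L_2\bigl(|x^k_t-x_t|_C+|h^\xi_k|_\Xi\bigr)\leq L_2\bigl(L|h_k|_\Gamma+|h_k|_\Gamma\bigr),
$$
using the Lipschitz dependence estimate \eref{e25} of the solution on the parameter (and $|x^k_t-x_t|_C\le|x^k_t-x_t|_\Ww$). So $K_0:=L_2(L+1)$ works.

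Next, under (A2) (iv) the map $t\mapsto\tau(t,x_t,\xi)$ is Lipschitz in $t$ on $[0,\alpha]$ (combining the Lipschitz-in-$t$ bound of (A2) (iv) with the Lipschitz-in-$\psi$ bound (A2) (ii) and the Lipschitz continuity of $t\mapsto x_t$ from $x\in\Ww([-r,\alpha],\setR^n)$ per Theorem~\ref{t1} (iii)); hence $u(t)=t-\tau(t,x_t,\xi)\in\Ww([0,\alpha],\setR)$, and likewise for $u^k$. Then, since $u,u^k$ are absolutely continuous, $|u^k-u|_{\Ww([0,\alpha],\setR)}=\max\{|u^k-u|_{C([0,\alpha],\setR)},\ |\dot u^k-\dot u|_{\Lw([0,\alpha],\setR)}\}$; the first term is already controlled by \eref{e88}, so it remains to bound $|\dot u^k-\dot u|_{\Lw}$. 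Writing $\dot u^k(t)-\dot u(t)=-\bigl(\frac d{dt}\tau(t,x^k_t,\xi+h^\xi_k)-\frac d{dt}\tau(t,x_t,\xi)\bigr)$ for a.e.\ $t\in[0,\alpha]$, assumption (A2) (v) applies directly with $y=x^k$, $\bar y=x$ (both lie in $\Ww([-r,\alpha],\setR^n)$ with segments in $M_1$), giving
$$
|\dot u^k(t)-\dot u(t)|\leq L_4\bigl(|x^k_t-x_t|_\Ww+|h^\xi_k|_\Xi\bigr)\leq L_4\bigl(L|h_k|_\Gamma+|h_k|_\Gamma\bigr)
$$
for a.e.\ $t$, again invoking \eref{e25}. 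Taking $K_1:=\max\{K_0,\ L_4(L+1)\}$ yields \eref{e332}.

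There is no real obstacle here: the lemma is essentially a bookkeeping exercise that repackages the hypotheses (A2) (ii), (iv), (v) and the solution-dependence estimate \eref{e25} of Theorem~\ref{t1}. The only point requiring a little care is verifying that $u\in\Ww([0,\alpha],\setR)$ in the first place — i.e.\ that $t\mapsto\tau(t,x_t,\xi)$ is genuinely Lipschitz in $t$ — which needs the observation that $t\mapsto x_t$ is Lipschitz from $[0,\alpha]$ into $C$ (indeed into $\Ww$) because the solution $x$ lies in $\Ww([-r,\alpha],\setR^n)$, a fact already recorded in Theorem~\ref{t1} (iii). Everything else is an immediate application of the stated Lipschitz bounds, with all Lipschitz constants uniform over $P$ by the sets $M_1,\dots,M_4$ being fixed.
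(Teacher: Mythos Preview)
Your proposal is correct and follows essentially the same route as the paper: the paper also obtains $K_0=L_2(L+1)$ from (A2)(ii) and \eref{e25}, establishes $u,u^k\in\Ww$ by combining (A2)(ii), (A2)(iv) and the Lipschitz continuity of $t\mapsto x^k_t$ (via \eref{e37}), and then uses (A2)(v) with \eref{e25} to get $|\dot u^k(t)-\dot u(t)|\leq L_4(L+1)|h_k|_\Gamma$, concluding with $K_1=\max\{K_0,L_4(L+1)\}$.
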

\begin{proof}
Assumption (A2) (ii) implies
$$
|u^k(t)-u(t)|=|\tau(t,x^k_t,\xi+h^\xi_k)-\tau(t,x_t,\xi)|
\leq L_2(|x^k_t-x_t|_C+|h^\xi_k|_\Xi),\quad t\in[0,\alpha],
$$
so \eref{e25} yields \eref{e88}  with $K_0:=L_2(L+1)$.

Now assume (A2) (iv) also holds.
For simplicity of the notation let $h_0:=0=(0,0,0)\in\Gamma$, and so $x^0:=x$ and $u^0:=u$. 
Then (A2) (ii), the Mean Value Theorem and \eref{e37} imply for $k\in\setN_0$ and $t,\bar t\in[0,\alpha]$
\begin{equation}\label{e383}
| \tau(t,x^k_t,\xi+h^\xi_k)-\tau(\bar t,x^k_{\bar t},\xi+h^\xi_k)\Bigr|\leq L_2(|t-\bar t|+|x^k_t-x^k_{\bar t}|_C)
\leq L_2(1+N)|t-\bar t|.
\end{equation}
Hence $u^k$ is Lipschitz continuous, and so it is almost everywhere differentiable on
$[0,\alpha]$, and $|\dot u^k|_{\Lw([0,\alpha],\setR)}\leq L_2(1+N)$. Therefore $u^k\in\Ww([0,\alpha],\setR)$ for $k\in\setN_0$.

Let $L_4=L_4(\alpha,M_1,M_4)$ be defined by (A2) (v). 
Assumption (A2) (v) and \eref{e25} give 
$$
|\dot u^k(t)-\dot u(t)|=\Bigl|\frac d{dt}\tau(t,x^k_t,\xi+h^\xi_k)-\frac d{dt}\tau(t,x_t,\xi)\Bigr|
\leq L_4(|x^k_t-x_t|_C+ |h^\xi_k|_\Xi)\leq L_4(L+1)|h_k|_\Gamma
$$
for a.e.\ $t\in[0,\alpha]$.
Therefore \eref{e332} holds with $K_1:=\max\{K_0,L_4(L+1)\}$.
\end{proof}

\bigskip

We note that  (A2) (v) and (viii) hold under natural assumptions for example for  functions of the form
$$
\tau(t,\psi,\xi)=\bar\tau\Bigr(t,\psi(-\eta^1(t)),\ldots,\psi(-\eta^\ell(t)),\int_{-r}^0 A(t,\zeta)\psi(\zeta)\,d\zeta,\xi(t)\Bigr)
$$
and
$$
f(t,\psi,u,\theta)=\bar f\Bigl(t,\psi(-\nu^1(t)),\ldots,\psi(-\nu^m(t)),\int_{-r}^0 B(t,\zeta)\psi(\zeta)\,d\zeta,\theta(t)\Bigr).
$$
Here $\Theta=\Ww([0,T],\setR) $ and $\Xi=\Ww([0,T],\setR)$ can be used, and then we have, e.g.,
for $\tau$ under straightforward assumptions we have for a.e.\ $t\in[0,\alpha]$, $y\in\Ww([-r,\alpha],\setR^n)$
\begin{eqnarray*}
\frac {d}{dt}\tau(t,y_t,\xi)
&=& D_1\bar\tau\Bigr(t,y(t-\eta^1(t)),\ldots,y(t-\eta^\ell(t)),\int_{-r}^0 A(t,\zeta)y(t+\zeta)\,d\zeta,\xi(t)\Bigr)\nl
&& +\sum_{i=1}^\ell D_{i+1}\bar\tau\Bigr(t,y(t-\eta^1(t)),\ldots,y(t-\eta^\ell(t)),\int_{-r}^0 A(t,\zeta)y(t+\zeta)\,d\zeta,\xi(t)\Bigr)\nl
&&\qquad\times \dot y(t-\eta^i(t))(1-\dot\eta^i(t))\nl
&& + D_{i+2}\bar\tau\Bigr(t,y(t-\eta^1(t)),\ldots,y(t-\eta^\ell(t)),\int_{-r}^0 A(t,\zeta)y(t+\zeta)\,d\zeta,\xi(t)\Bigr)\nl
&&\qquad\times \int_{-r}^0 [D_1 A(t,\zeta)y(t+\zeta)+A(t,\zeta)\dot y(t+\zeta)]\,d\zeta\nl
&& + D_{i+3}\bar\tau\Bigr(t,y(t-\eta^1(t)),\ldots,y(t-\eta^\ell(t)),\int_{-r}^0 A(t,\zeta)y(t+\zeta)\,d\zeta,\xi(t)\Bigr)\dot \xi(t).
\end{eqnarray*}
Similar formula holds for $\frac {d}{dt}f(t,y_t,y(t-\tau(t,y_t,\xi)),\theta)$. So if
$\bar\tau$ and $\bar f$ are continuously differentiable, $\eta^i$ are continuously differentiable
and $\esssup_{t\in[0,T]}(1-\dot\eta^i(t))>0$ for $i=1,\ldots,\ell$, then it is easy to argue that
(A2) (v) and (viii) hold. 
\bigskip

\section{First-order differentiability  wrt the parameters}\label{sec_DDE_first}
\medskip
\setcounter{equation}0
\setcounter{theorem}0

In this section we study the differentiability of the solution $x(t,\gamma)$ of the \IVP{e1}{e2} 
wrt  $\gamma$. 
The proof of our differentiability results will be based on the following lemmas.
\medskip

\begin{lemma}\label{l5}
Let  $y\in\Ww([-r,\alpha],\setR^n)$, $\omega_k\in (0,\infty)$ ($k\in\setN$) 
be a sequence satisfying
$\omega_k\to0$ as $k\to\infty$. Let  $u,u^k\in{\cal PM}([0,\alpha],[-r,\alpha])$ ($k\in\setN$) be such that 
\begin{equation}\label{e11}
|u^k- u|_{\Ww([0,\alpha],\,\setR)}\leq \omega_k,\qquad k\in\setN.
\end{equation}
Then
\begin{equation}\label{e12}
\lim_{k\to\infty}\frac 1{\omega_k}\int_0^\alpha |y(u^k(s))-y(u(s))-\dot y(u(s))(u^k(s)-u(s))|\,ds=0.
\end{equation}
\end{lemma}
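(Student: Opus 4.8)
The plan is to reduce the statement to a density argument in the spirit of the proof of Lemma~\ref{l4}. Write $\Delta_k(s):=y(u^k(s))-y(u(s))-\dot y(u(s))(u^k(s)-u(s))$. Since $y\in\Ww$, $\dot y\in\Lw([-r,\alpha],\setR^n)$, and Lemma~\ref{l3} applied to $u,u^k\in{\cal PM}$ shows $\dot y\circ u,\,\dot y\circ u^k\in\Lw$ with norms bounded by $|\dot y|_\Lw$. In particular, for a.e.\ $s$, using $|y(u^k(s))-y(u(s))|\leq |\dot y|_\Lw|u^k(s)-u(s)|$ (Lemma~\ref{ll12b} applied on the interval with endpoints $u(s),u^k(s)$, which lie in $[-r,\alpha]$) together with $|\dot y(u(s))|\leq |\dot y|_\Lw$, we get the pointwise bound $|\Delta_k(s)|\leq 2|\dot y|_\Lw|u^k(s)-u(s)|\leq 2|\dot y|_\Lw\omega_k$. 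Hence $\frac1{\omega_k}|\Delta_k(s)|\leq 2|\dot y|_\Lw$ is bounded uniformly in $k$ and $s$; this uniform bound will be the workhorse for passing to a dense class.

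The main step is the density reduction. First I would establish \eqref{e12} for $y$ a $C^1$ function on $[-r,\alpha]$ (equivalently, approximate $\dot y$ by a continuous function). For such $y$, $\dot y$ is uniformly continuous, so given $\eta>0$ there is $\rho>0$ with $|\dot y(\sigma)-\dot y(\sigma')|<\eta$ whenever $|\sigma-\sigma'|<\rho$; writing $\Delta_k(s)=\int_0^1[\dot y(u(s)+\vartheta(u^k(s)-u(s)))-\dot y(u(s))]\,d\vartheta\,(u^k(s)-u(s))$ and using $|u^k(s)-u(s)|\leq\omega_k<\rho$ for $k$ large, we get $|\Delta_k(s)|\leq\eta\omega_k$, so $\frac1{\omega_k}\int_0^\alpha|\Delta_k(s)|\,ds\leq\eta\alpha\to0$. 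For general $y\in\Ww$, fix $\eta>0$ and pick a $C^1$ function $z$ on $[-r,\alpha]$ with $|\dot y-\dot z|_{L^1([-r,\alpha],\setR^n)}<\eta$ (step functions, hence $C^1$ mollifications, are dense in $L^1$; see the density argument used in part (iii) of the proof of Lemma~\ref{l4}). Split $\Delta_k$ into the analogous difference $\Delta_k^z$ built from $z$, plus the two error terms $\bigl(y(u^k(s))-z(u^k(s))\bigr)-\bigl(y(u(s))-z(u(s))\bigr)$ and $-\bigl(\dot y(u(s))-\dot z(u(s))\bigr)(u^k(s)-u(s))$.

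The error terms are exactly where the ${\cal PM}$-machinery of Section~\ref{sec_def} is needed, and I expect this to be the only real obstacle. For the second error term, $\bigl|(\dot y-\dot z)(u(s))\bigr||u^k(s)-u(s)|\leq\omega_k\,|(\dot y-\dot z)(u(s))|$, so after dividing by $\omega_k$ we must bound $\int_0^\alpha|(\dot y-\dot z)(u(s))|\,ds$; since $u\in{\cal PM}$, Lemma~\ref{l3} (or directly the argument of Lemma~\ref{l2}/Lemma~\ref{l4}(iii), integrating over the subintervals $[t_i',t_{i+1}'']$ where $|\dot u|\geq M$ and absorbing the $2\epsilon(m+1)$ endpoint contribution) gives $\int_0^\alpha|(\dot y-\dot z)(u(s))|\,ds\leq 2\epsilon(m+1)|\dot y-\dot z|_\Lw+\frac1M|\dot y-\dot z|_{L^1}$, which is small. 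The first error term is handled the same way, writing the numerator as $(\dot y-\dot z)$-type differences via the fundamental theorem of calculus; alternatively one applies Lemma~\ref{l21} with $f^{k,h}:=|\dot y-\dot z|$ (constant in $k$), noting it is in $\Lw$ with small $L^1$-norm, to conclude $\frac1{\omega_k}\int_0^\alpha|\cdot|\,ds$ is controlled, again using $u,u^k\in{\cal PM}$ and \eqref{e11}. Putting the three pieces together: $\limsup_k\frac1{\omega_k}\int_0^\alpha|\Delta_k(s)|\,ds\leq 0+C\eta+C\eta$, and letting $\eta\to0$ gives \eqref{e12}. I would also remark at the end that the hypothesis $u,u^k\in{\cal PM}$ (rather than merely $C$-convergence) is essential precisely because, as already noted before Lemma~\ref{l4}, the change-of-variables estimate requires control of $\dot u^k$ on the interior subintervals, which is why \eqref{e11} is stated in the $\Ww$-norm.
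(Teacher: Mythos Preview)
Your argument is essentially correct but follows a genuinely different route from the paper. The paper does \emph{not} approximate $y$ by $C^1$ functions. Instead it writes, on the interior subintervals $[t_i',t_{i+1}'']$ where $|\dot u|\ge M$, the same integral representation you use in the $C^1$ case,
\[
y(u^k(s))-y(u(s))-\dot y(u(s))(u^k(s)-u(s))=\int_0^1\bigl[\dot y(u(s)+\nu(u^k(s)-u(s)))-\dot y(u(s))\bigr](u^k(s)-u(s))\,d\nu,
\]
and then applies Lemma~\ref{l1} (the Brokate--Colonius convergence \eref{e68}) directly to $g=\dot y\in L^1$: since $u+\nu(u^k-u)\in\calA(M/2)$ on each $[t_i',t_{i+1}'']$ for $k$ large and converges to $u$ in $C$, the inner integral tends to $0$ for each fixed $\nu$, and the Dominated Convergence Theorem finishes the job. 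The endpoint pieces contribute $O(\epsilon)$ as in your sketch. This is shorter because Lemma~\ref{l1} already handles arbitrary $L^1$ integrands; your density reduction effectively reproves that lemma for the special case $g=\dot y$.

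Your approach is valid and more self-contained (it avoids invoking Lemma~\ref{l1}), but two points need tightening. First, when you pick $z$ with $|\dot y-\dot z|_{L^1}<\eta$, you must also keep $|\dot y-\dot z|_{\Lw}$ uniformly bounded (e.g.\ choose $z$ by mollifying $\dot y$, so $|\dot z|_\Lw\le|\dot y|_\Lw$); otherwise the endpoint contribution $2\epsilon(m+1)|\dot y-\dot z|_\Lw$ is uncontrolled. Second, the order of limits is: fix $\epsilon$ (hence the mesh and $M=M(\epsilon)$), \emph{then} choose $\eta<\epsilon M/m$, \emph{then} send $k\to\infty$; your final line ``$\le C\eta$, let $\eta\to0$'' obscures that the bound is really $O(\epsilon)$ after the inner choices. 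Also, your appeal to Lemma~\ref{l21} with $f^{k,h}:=|\dot y-\dot z|$ constant in $k$ does not match its hypotheses (which require the $L^1$ norm to tend to $0$ in $k$); what you actually need is the \emph{inequality} displayed in the proof of Lemma~\ref{l21}, applied for each fixed large $k$ to the convex combinations $u+\vartheta(u^k-u)$.
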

\begin{proof}
Let $0=t_0<t_1<\cdots<t_{m-1}<t_m=\alpha$ be the  mesh points of $u$ from the Definition~\ref{d1}, and 
let $0<\epsilon<\min\{t_{i+1}-t_i\st i=0,\ldots,m-1\}/2$ be fixed, and introduce  $t_i':=t_i+\epsilon$ for
$i=0,\ldots,m-1$, $t_i'':=t_i-\epsilon$ for $i=1,\ldots,m$, $t_0'':=0$, $t_m':=\alpha$, and 
let
$$
M:=\min_{i=0,\ldots,m-1}\essinf_{t\in[t_i',t_{i+1}'']} |\dot u(t)|.
$$
We have $M>0$, since $u\in{\cal PM}([0,\alpha],[-r,\alpha])$. Assumption \eref{e11} yields 
that there exists $k_0>0$ such that $|u^k-u|_{\Ww([0,\alpha],\,\setR)}<\frac M2$ for $k\geq k_0$.
Then for $k\geq k_0$ it follows $|\dot u^k(s)|\geq \frac M2$ and $|\dot u(s)+\nu(\dot u^k(s)-\dot u(s))|\geq \frac M2$
for a.e. $s\in[t_i',t_{i+1}'']$, $i=0,\ldots,m-1$ and $\nu\in[0,1]$.
Let $A:=|y|_\Ww([-r,\alpha],\setR^n)$.
Then simple manipulations, \eref{e11} and Fubini's theorem yield
\begin{eqnarray*}
\balra{1cm}{\int_0^\alpha |y(u^k(s))-y(u(s))-\dot y(u(s))(u^k(s)-u(s))|\,ds}\\
&\leq& \sum_{i=0}^m \int_{t_i''}^{t_i'} \Bigl(|y(u^k(s))-y(u(s))|+|\dot y(u(s))||u^k(s)-u(s)|\Bigr)\,ds\\
&&\quad+\sum_{i=0}^{m-1} \int_{t_i'}^{t_{i+1}''} \Bigl|\int_{u(s)}^{u^k(s)}\Bigl(\dot y(v)-\dot y(u(s))\Bigr)dv\Bigr|\,ds\\
&\leq& (m+1)2\epsilon 2A|u^k- u|_{C([0,\alpha],\setR)}\nl
&&\quad+\sum_{i=0}^{m-1} \int_{t_i'}^{t_{i+1}''} \Bigl|\int_{0}^{1}\Bigl[\dot y\Bigl(u(s)+\nu(u^k(s)-u(s))\Bigr)
-\dot y(u(s))\Bigr](u^k(s)-u(s))\,d\nu\Bigr|\,ds\\
&\leq& \omega_k\Bigl[(m+1)4A\epsilon
+ \sum_{i=0}^{m-1} \int_{0}^{1}\int_{t_i'}^{t_{i+1}''}  
\Bigl|\dot y\Bigl(u(s)+\nu(u^k(s)-u(s))\Bigr)-\dot y(u(s))\Bigr|\,ds\,d\nu\Bigr].
\end{eqnarray*}
It follows from  Lemma~\ref{l1} and Remark~\ref{r1} that for every $\nu\in[0,1]$
$$
\lim_{k\to\infty}\int_{t_i'}^{t_{i+1}''} 
\Bigl|\dot y\Bigl(u(s)+\nu(u^k(s)-u(s))\Bigr)-\dot y(u(s))\Bigr|\,ds=0,\qquad i=0,\ldots,m-1,
$$
hence we get  by using the Lebesgue's Dominated Convergence Theorem that
\begin{eqnarray*}
 \limsup_{k\to\infty}\frac 1{\omega_k}\int_0^\alpha |y(u^k(s))-y(u(s))-\dot y(u(s))(u^k(s)-u(s))|\,ds
\leq (m+1)4A\epsilon.
\end{eqnarray*}
This concludes the proof of \eref{e12}, since $\epsilon>0$ can be arbitrary close to 0.
\end{proof}
\bigskip

We introduce the notations
\begin{eqnarray}
\omega_f(t,\bar\psi,\bar u,\bar\theta,\psi,u,\theta)
&:=&f(t,\psi,u,\theta)-f(t,\bar\psi,\bar u,\bar\theta)-D_2f(t,\bar\psi,\bar u,\bar\theta)(\psi-\bar\psi)\nl
&&-D_3f(t,\bar\psi,\bar u,\bar\theta)(u-\bar u)
-D_4f(t,\bar\psi,\bar u,\bar\theta)(\theta-\bar \theta),\qquad\label{e17}\\
\omega_\tau(t,\bar\psi,\bar \xi,\psi,\xi)
&:=&\tau(t,\psi,\xi)-\tau(t,\bar\psi,\bar\xi)-D_2\tau(t,\bar\psi,\bar\xi)(\psi-\bar\psi)\nl
&&-D_3\tau(t,\bar\psi,\bar\xi)(\xi-\bar\xi)\label{e22}
 \end{eqnarray}
for $ t\in[0,T]$, $\bar \psi,\psi\in\Omega_1$, $\bar u,u\in\Omega_2$, $\bar\theta,\theta\in\Omega_3$,
$\bar\xi,\xi\in\Omega_4$, and
\begin{eqnarray}
\Omega_f(\epsilon)%
&:=&\max_{i=2,3,4}\sup\Bigl\{|D_i f(t,\psi,u,\theta)-
D_if(t,\tilde\psi,\tilde u,\tilde\theta)|_{\calL(Y_i,\,\setR^n)}\st\nl
&&\qquad\qquad
|\psi-\tilde \psi|_C+|u-\tilde u| +|\theta-\tilde\theta|_\Theta\leq 
   \epsilon,\  \ t\in[0,\alpha],\ \psi,\tilde \psi\in M_1,\nl
&&\qquad\qquad u,\tilde u \in M_2,\ \theta,\tilde \theta \in M_3\Bigr\},
\quad\label{e29a}\\
\Omega_\tau(\epsilon)\!\!
&:=&\max_{i=2,3}\sup \Bigl\{|D_i\tau(t,\psi,\xi)-D_i\tau(t,\bar\psi,\bar\xi)|_{\calL(Z_i,\,\setR)}\st 
|\psi-\bar\psi|_C+|\xi-\bar\xi|_\Xi\leq\epsilon,\nl
&&\qquad\qquad \ t\in[0,\alpha],\ 
\psi,\bar\psi\in M_1,\ \xi,\bar\xi\in M_4\Bigr\},\qquad \label{e29}
\end{eqnarray}
where $Y_2:=C$, $Y_3:=\setR^n$, $Y_4:=\Theta$, $Z_2:=C$ and $Z_3:=\Xi$.
\bigskip

The following result is an easy generalization of Lemma 4.2 of \cite{Ha4}
for the \IVP{e1}{e2}, therefore we omit its proof here. (See also
the related proof of Lemma~\ref{l17} below.)
\medskip

\begin{lemma}[see \cite{Ha4}]\label{l6}
Suppose (A1) (i)--(iii), (A2) (i)--(iii). Let $P$ and $\alpha>0$ be defined by Theorem~\ref{t1},
let $\gamma=(\phi,\theta,\xi)\in P$ 
be fixed, and   $h_k=(h^\phi_k,h^\theta_k,h^\xi_k)\in \Gamma$ ($k\in\setN$) be a sequence satisfying
$|h_k|_{\Gamma} \to 0$ as $k\to\infty$, and $\gamma+h_k\in P$ for $k\in\setN$. 
Let $x(t):=x(t,\gamma)$,  $x^k(t):=x(t,\gamma+h_k)$,
$u(t):=t-\tau(t,x_t,\xi)$ and $u^k(t):=t-\tau(t,x^k,\xi+h^\xi_k)$.
Then
\begin{equation}\label{e18}
\lim_{k\to\infty}\frac 1{|h_k|_{\Gamma}}\int_0^\alpha 
|\omega_f(s,x_s,x(u(s)),\theta,x^k_s,x^k(u^k(s)),\theta+h^\theta_k)|\,ds=0
\end{equation}
and
\begin{equation}\label{e21}
\lim_{k\to\infty}\frac 1{|h_k|_{\Gamma}}\int_0^\alpha 
|\omega_\tau(s,x_s,\xi,x^k_s,\xi+h^\xi_k)|\,ds=0.
\end{equation}
\end{lemma}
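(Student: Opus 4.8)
The plan is to prove \eref{e18} and \eref{e21} by the same mechanism, so I will concentrate on the more involved \eref{e18}; \eref{e21} is handled identically, using $\omega_\tau$ in place of $\omega_f$. The idea is to split the integrand $\omega_f(s,x_s,x(u(s)),\theta,x^k_s,x^k(u^k(s)),\theta+h^\theta_k)$ according to the Taylor-remainder form of $\omega_f$: integrating the derivative along the segment joining the two argument tuples, one writes
$$
\omega_f(\cdots) = \int_0^1 \Bigl[D_2f(\cdots^\nu)-D_2f(\cdots^0)\Bigr](x^k_s-x_s)\,d\nu + (\text{analogous }D_3,D_4\text{ terms}),
$$
where $\cdots^\nu$ denotes the tuple with each component replaced by the $\nu$-convex combination. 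By the definition \eref{e29a} of $\Omega_f(\epsilon)$, each bracket is bounded in operator norm by $\Omega_f(\rho_k)$, where $\rho_k$ is a bound for the total increment $|x^k_s-x_s|_C+|x^k(u^k(s))-x(u(s))|+|h^\theta_k|_\Theta$. Here I would invoke Theorem~\ref{t1}(iii), specifically \eref{e25}, together with Lemma~\ref{l11} (inequality \eref{e88}) and the uniform bound \eref{e37}, to get $\rho_k \leq C|h_k|_\Gamma$ for a fixed constant $C$; since $D_2f,D_3f,D_4f$ are (uniformly) continuous on the compact-type sets $M_1\times M_2\times M_3$ by (A1)(iii), we have $\Omega_f(\epsilon)\to0$ as $\epsilon\to0$, hence $\Omega_f(\rho_k)\to0$.

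Next I would divide by $|h_k|_\Gamma$ and estimate. The $D_3f$-term produces the factor $|x^k(u^k(s))-x(u(s))|$, which by the triangle inequality and \eref{e88}, \eref{e25} is again $\leq C|h_k|_\Gamma$ pointwise in $s$; the $D_2f$ and $D_4f$ terms are handled by \eref{e25} and the obvious bound on $|h^\theta_k|_\Theta$. Thus
$$
\frac1{|h_k|_\Gamma}\int_0^\alpha |\omega_f(\cdots)|\,ds \leq \alpha\, C\, \Omega_f(\rho_k) \longrightarrow 0,
$$
which gives \eref{e18}. The estimate \eref{e21} follows the same way from (A2)(iii) and \eref{e29}, using only \eref{e88} and \eref{e25}; no composition with $u^k$ is needed there since $\omega_\tau$ involves only $x^k_s$ and $\xi+h^\xi_k$.

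The one genuinely delicate point is that the $D_3f$-increment involves the \emph{composed} quantity $x^k(u^k(s))$ versus $x(u(s))$: a priori, controlling $|x^k(u^k(s))-x(u(s))|$ uniformly in $s$ requires both that $x^k-x$ is small in $C$ \emph{and} that $x$ does not vary too fast near $u(s)$. Since $x\in\Ww([-r,\alpha],\setR^n)$ by Theorem~\ref{t1}(iii), $x$ is Lipschitz, so $|x(u^k(s))-x(u(s))|\leq \mathrm{Lip}(x)\,|u^k(s)-u(s)|\leq \mathrm{Lip}(x)K_0|h_k|_\Gamma$ by \eref{e88}, and $|x^k(u^k(s))-x(u^k(s))|\leq |x^k_\alpha-x_\alpha|_C\leq L|h_k|_\Gamma$ by \eref{e25}; these combine to give the required linear bound. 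This is exactly where the Lipschitz (i.e.\ $\Ww$) regularity of the solution, rather than mere continuity, is used, and it is why the piecewise-monotonicity hypotheses enter only implicitly (through Theorem~\ref{t1} and Lemma~\ref{l11}) at this stage — they will become indispensable only in the next lemma (Lemma~\ref{l17}), not here.
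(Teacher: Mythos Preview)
Your argument is correct and coincides with the approach the paper has in mind (it omits the proof, citing \cite{Ha4} and pointing to the analogous Lemma~\ref{l17}). One point needs care: you assert $\Omega_f(\epsilon)\to0$ from uniform continuity on $M_1\times M_2\times M_3$, but $M_3$ is only closed and bounded in the general normed space $\Theta$, hence not compact in general. The fix is immediate---replace $M_3$ by the compact set $\{\theta\}\cup\bigcup_k[\theta,\theta+h^\theta_k]$ of $\theta$-values that actually occur in the interpolation; the paper performs exactly this maneuver later, defining $M_3^*$ in the proof of Lemma~\ref{l31}. Alternatively, the route suggested by Lemma~\ref{l17} sidesteps the issue entirely: bound $|\omega_f|/|h_k|_\Gamma$ by a constant via the Lipschitz hypothesis (A1)(ii), observe pointwise convergence to zero at each fixed $s$ from Fr\'echet differentiability (A1)(iii), and apply the Dominated Convergence Theorem. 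A minor correction: the inequality $|x^k(u^k(s))-x(u^k(s))|\leq |x^k_\alpha-x_\alpha|_C$ is not valid as written, since $u^k(s)$ need not lie in $[\alpha-r,\alpha]$; use instead $\max_{t\in[0,\alpha]}|x^k_t-x_t|_C\leq L|h_k|_\Gamma$ from \eref{e25} (this is precisely \eref{e345}). Finally, the piecewise-monotonicity hypothesis does not enter even implicitly here: Theorem~\ref{t1} and the part of Lemma~\ref{l11} giving \eref{e88} use only (A1)(i)--(ii) and (A2)(i)--(ii).
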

\bigskip

A solution $x(\cdot,\gamma)$  of the \IVP{e1}{e2} for $\gamma\in P$ is, 
 in general, only a $\Ww$-function on the interval $[-r,0]$, but it is 
continuously differentiable for $t\geq 0$. In \cite{HT} (see also \cite{Ha4})
a parameter set 
$$
P_1:=\{ \gamma=(\phi,\theta,\xi)\in P\st x(\cdot,\gamma)\in X(\alpha,\xi)\}
$$
 was considered, where
\begin{eqnarray*}
X(\alpha,\xi)
&:=&\Bigl\{x\in \Ww([-r,\alpha],\setR^n)\st
x_t\in\Omega_1,\ x(t-\tau(t,x_t,\xi))\in\Omega_2\ \mbox{for } t\in[0,\alpha],\ \nl
&&\quad  \mbox{and}\ \ \essinf\Bigl\{ \dd{t} (t-\tau(t,x_t,\xi))\st\  \mbox{a.e. }
t\in[0,\alpha^*]\Bigr\}>0\Bigr\}\qquad\
\label{e24}
\end{eqnarray*}
and $\alpha^*:=\min\{r,\alpha\}$.
Then Lemma~\ref{l1} yields that the function $t\mapsto \dot x(t-\tau(t,x_t,\xi))$ is 
well-defined for a.e. $t\in[0,\alpha^*]$ and it is integrable on $[0,\alpha^*]$, 
and it is well-defined and continuous on
$[\alpha^*,\alpha]$. Note that it was shown in \cite{HT} (see also \cite{Ha4}) that $P_1$ 
is an open subset of the parameter set $P$.
In this section we relax this condition. We define the parameter set
\begin{eqnarray}
P_2&:=&\{ \gamma=(\phi,\theta,\xi)\in P\st \mbox{the map }\ [0,\alpha^*]\to\setR,\ t\mapsto t-\tau(t,x_t(\cdot,\gamma),\xi)\ 
\nl
&&\qquad\qquad \mbox{belongs to } {\cal PM}([0,\alpha^*],[-r,\alpha^*])\}.\label{e317}
\end{eqnarray}
Then we have $P_1\subset P_2\subset P$, and Lemma~\ref{l3} yields that for a solution $x$ corresponding to
parameter $\gamma\in P_2$
the function $t\mapsto \dot x(t-\tau(t,x_t,\xi))$ is 
well-defined for a.e. $t\in[0,\alpha^*]$ and it is integrable on $[0,\alpha^*]$.
Therefore, as the next discussion will show, the parameter set where the variational equation
is defined,
and correspondingly the differentiability of the solution wrt the parameters can be obtained is
larger than in the previous papers \cite{Ha2, Ha4, HT}.

Let $\gamma=(\phi,\theta,\xi)\in P_2$ be fixed, and let $x(t):=x(t,\gamma)$.
Consider the space $C\times\Theta\times\Xi$
equipped with the product norm
$|( h^\phi,h^\theta,h^\xi)|_{C\times\Theta\times\Xi}:=|h^\phi|_C+|h^\theta|_\Theta+|h^\xi|_\Xi$.
Then for a.e.\ $t\in[0,\alpha]$ we introduce the linear operator 
$L(t,x)\st C\times\Theta\times\Xi\to\setR^n$ by
\begin{eqnarray}
\balra{0.5cm}{L(t,x)(h^\phi,h^\theta,h^\xi)}\nl
&:=&D_2f(t,x_t,x(t-\tau(t,x_t,\xi)),\theta)h^\phi 
+ D_3f(t,x_t,x(t-\tau(t,x_t,\xi)),\theta)\nl
&&\quad\times\Bigl[-\dot x(t-\tau(t,x_t,\xi))\Bigl(D_2\tau(t,x_t,\xi)h^\phi+D_3\tau(t,x_t,\xi)h^\xi\Bigr)
+h^\phi(-\tau(t,x_t,\xi))\Bigr]\nl
&&+D_4f(t,x_t,x(t-\tau(t,x_t,\xi)),\theta)h^\theta
\qquad\ \  \label{e26}
\end{eqnarray}
for $(h^\phi,h^\theta,h^\xi)\in C\times\Theta\times\Xi$. 
We have by (A1) (ii), (A2) (ii) and \eref{e37}
\begin{eqnarray}
|L(t,x)(h^\phi,h^\theta,h^\xi)|
&\leq&L_1|h^\phi|_C 
+ L_1\Bigl[N(L_2|h^\phi|_C+L_2|h^\xi|_\Xi)+|h^\phi|_C\Bigr]+L_1|h^\theta|_\Theta\nl
&\leq& L_1N_0|(h^\phi,h^\theta,h^\xi)|_{C\times\Theta\times\Xi},\qquad \mbox{a.e. } t\in[0,\alpha],\label{e368}
\end{eqnarray}
where
\begin{equation}\label{e47}
N_0:=NL_2+3.
\end{equation}
Therefore
\begin{equation*}
|L(t,x)|_{\calL(C\times\Theta\times\Xi,\setR^n)}
\leq  L_1N_0,\qquad \mbox{a.e. } t\in[0,\alpha].\qquad\ \  
\end{equation*}
Hence $L(t,x)$ is a bounded linear operator for all $t$ for which $\dot x(t-\tau(t,x_t,\xi))$ exists,
i.e., for a.e.\ $t\in[0,\alpha]$.

For $\gamma\in P_2$ we define the variational equation associated to  $x=x(\cdot,\gamma)$ 
as
\begin{eqnarray}
\dot z(t)&=&L(t,x)(z_t,h^\theta,h^\xi) \qquad \mbox{a.e.}\ t\in [0,\alpha],\label{e3} \\
z(t)\!&=& h^\phi(t),\qquad t\in[-r,0],\label{e4}
\end{eqnarray}
where $h=(h^\phi,h^\theta,h^\xi)\in C\times\Theta\times\Xi$ is fixed.
The \IVP{e3}{e4} is a Carath\'eodory type linear delay equation. By its solution we mean a continuous function
$z\st[-r,\alpha]\to\setR^n$, which is absolutely continuous on $[0,\alpha]$, and it satisfies
\eref{e3} for a.e. $t\in[0,\alpha]$ and \eref{e4} for all $t\in[-r,0]$.
Standard argument (\cite{CL}, \cite{HLu}) shows
that the \IVP{e3}{e4} has a unique solution $z(t)=z(t,\gamma, h)$ for 
$t\in[-r,\alpha]$, $\gamma\in P_2$ and $ h=(h^\phi,h^\theta,h^\xi)\in C\times\Theta\times\Xi$.

The following  result was proved in \cite{Ha4} for the parameter set $P_1$ (see Lemma 4.4 in
\cite{Ha4}),
but the proof is identical for the parameter set $P_2$, as well. \medskip

\begin{lemma}[see \cite{Ha4}]\label{l7}
Assume (A1) (i)--(iii), (A2) (i)--(iii). Let 
 $\gamma\in P_2$, and 
$x(t):=x(t,\gamma)$  for $t\in[-r,\alpha]$. 
Let $ h\in C\times\Theta\times\Xi$ and let $z(t,\gamma, h)$ be the corresponding solution of the
\IVP{e3}{e4} on $[-r,\alpha]$. Then 
\begin{itemize}
\item[(i)] $z(t,\gamma,\cdot)\in\calL(C\times\Theta\times\Xi,\setR^n)$, 
the map $C\times\Theta\times\Xi\to C$, $ h\mapsto z_t(\cdot,\gamma, h)$ is in $\calL(C\times\Theta\times\Xi,C)$, and 
\begin{equation}\label{e45}
|z(t,\gamma,h)|\leq|z_t(\cdot,\gamma,h)|_C\leq N_1|h|_{C\times\Theta\times\Xi},
\qquad t\in[0,\alpha],\ \gamma\in P_2,\ h\in C\times\Theta\times\Xi,
\end{equation}
where $N_1:=e^{L_1N_0\alpha}$;
\item[(ii)]
there exists $N_2\geq 0$ such that
\begin{equation}\label{e94}
|z_t(\cdot,\gamma,h)|_\Ww\leq N_2|h|_\Gamma,
\qquad t\in[0,\alpha],\ \gamma\in P_2,\ h\in\Gamma.
\end{equation}
\end{itemize}
\end{lemma}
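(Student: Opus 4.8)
The plan is to prove Lemma~\ref{l7} by the standard energy/Gronwall argument, treating the variational \IVP{e3}{e4} as a Carath\'eodory-type linear delay equation. First I would fix $\gamma=(\phi,\theta,\xi)\in P_2$, write $x(t):=x(t,\gamma)$, and observe that by the preceding discussion the coefficient operator $L(t,x)$ is a bounded linear operator for a.e.\ $t\in[0,\alpha]$ with $|L(t,x)|_{\calL(C\times\Theta\times\Xi,\setR^n)}\leq L_1N_0$; this is exactly the estimate \eref{e368} already established. The key structural point is that for $\gamma\in P_2$ the map $t\mapsto t-\tau(t,x_t,\xi)$ lies in ${\cal PM}([0,\alpha^*],[-r,\alpha^*])$, so Lemma~\ref{l3} guarantees that $t\mapsto\dot x(t-\tau(t,x_t,\xi))$ is in $\Lw([0,\alpha^*],\setR^n)$ and hence the right-hand side of \eref{e3} is an $L^1$ (indeed $\Lw$) function of $t$ for each fixed $h$; this is what makes the Carath\'eodory existence/uniqueness theory (\cite{CL},\cite{HLu}) applicable and yields the unique solution $z(\cdot,\gamma,h)$.

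For part~(i), linearity of the \IVP{e3}{e4} in $h$ is immediate from the linearity of $L(t,x)$ in its $(h^\phi,h^\theta,h^\xi)$-slot and of the initial condition \eref{e4}, so $h\mapsto z(t,\gamma,h)$ and $h\mapsto z_t(\cdot,\gamma,h)$ are linear maps. To get boundedness and the explicit constant $N_1=e^{L_1N_0\alpha}$, I would integrate \eref{e3}: for $t\in[0,\alpha]$,
$$
|z(t)|\leq |h^\phi(0)|+\int_0^t |L(s,x)(z_s,h^\theta,h^\xi)|\,ds
\leq |h|_{C\times\Theta\times\Xi}+\int_0^t L_1N_0\bigl(|z_s|_C+|h^\theta|_\Theta+|h^\xi|_\Xi\bigr)\,ds,
$$
and absorb the constant terms so that, with $a:=|h|_{C\times\Theta\times\Xi}(1+L_1N_0\alpha)$ or more simply by enlarging the bound, one reaches an inequality of the form $|z(t)|\leq a+\int_0^t L_1N_0|z_s|_C\,ds$ with $a$ a fixed multiple of $|h|_{C\times\Theta\times\Xi}$; since $|z_0|_C=|h^\phi|_C\leq a$, Lemma~\ref{l0} applies and gives \eref{e45} (after adjusting constants so the exponential constant is exactly $L_1N_0\alpha$; the cleanest route is to note $|L(s,x)(z_s,h^\theta,h^\xi)|\le L_1N_0(|z_s|_C+|h|_{C\times\Theta\times\Xi})$ and run Gronwall on $v(t):=|z_t|_C+|h|_{C\times\Theta\times\Xi}$). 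Continuity of $z(t,\gamma,\cdot)$ as a linear operator then follows from the bound, giving membership in $\calL(C\times\Theta\times\Xi,\setR^n)$ and, pointwise in $\zeta\in[-r,0]$, in $\calL(C\times\Theta\times\Xi,C)$.

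For part~(ii), when $h=(h^\phi,h^\theta,h^\xi)\in\Gamma=\Ww\times\Theta\times\Xi$ I would estimate the $\Lw$-norm of $\dot z$ on $[0,\alpha]$. For a.e.\ $t\in[0,\alpha]$ equation \eref{e3} together with \eref{e368} gives $|\dot z(t)|\le L_1N_0(|z_t|_C+|h^\theta|_\Theta+|h^\xi|_\Xi)\le L_1N_0(N_1+1)|h|_\Gamma$ using \eref{e45} and $|h|_{C\times\Theta\times\Xi}\le|h|_\Gamma$; on $[-r,0]$ one has $\dot z=\dot h^\phi$ with $|\dot h^\phi|_\Lw\le|h|_\Gamma$. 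Combining with \eref{e45} for the sup-norm part of the $\Ww$-norm yields \eref{e94} with $N_2:=\max\{1,N_1,L_1N_0(N_1+1)\}$ (or a similar explicit constant). I expect no serious obstacle here; the only point requiring care is that the pointwise bound \eref{e368} on $L(t,x)$ is only an a.e.\ statement because it uses $\dot x(t-\tau(t,x_t,\xi))$, so one must be careful to invoke Lemma~\ref{l3} to ensure this quantity is genuinely in $\Lw$ on $[0,\alpha^*]$ (and continuous on $[\alpha^*,\alpha]$) — this is precisely the place where the enlargement from $P_1$ to $P_2$ does the work, and the rest is routine Gronwall estimation exactly as in the proof of Lemma~4.4 of \cite{Ha4}.
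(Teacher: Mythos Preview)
Your proposal is correct and follows the same route the paper defers to in \cite{Ha4}: linearity from the structure of \eref{e3}--\eref{e4}, the $C$-bound via \eref{e368} and Gronwall (Lemma~\ref{l0}), and the $\Ww$-bound by estimating $\dot z$ pointwise on $[-r,0)$ and $(0,\alpha]$ separately. To obtain exactly $N_1=e^{L_1N_0\alpha}$, run Gronwall on $m(t):=|z_t|_C+|h^\theta|_\Theta+|h^\xi|_\Xi$ (so that $m(t)\le|h|_{C\times\Theta\times\Xi}+\int_0^t L_1N_0\,m(s)\,ds$), rather than on $|z_t|_C+|h|_{C\times\Theta\times\Xi}$ as you suggest, which double-counts $|h^\phi|_C$ and would yield $2N_1$; this is a cosmetic point and immaterial for the rest of the argument.
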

\bigskip

Next we show that the linear operators $z(t,\gamma,\cdot)$
and $z_t(\cdot,\gamma,\cdot)$ are continuous in $t$  and $\gamma$,
assuming that $\gamma$ belongs to $P_2$. First we need
the following result.
\medskip

\begin{lemma}\label{l33}
Assume (A1) (i)--(iii), (A2) (i)--(iii). 
Let $\gamma\in P_2$, $h=(h^\phi,h^\theta,h^\xi)\in\Gamma$,  
$h_k=(h^\phi_k,h^\theta_k,h^\xi_k)\in \Gamma$ ($k\in\setN$) be a sequence such that
$|h_k|_\Gamma\to0$ as $k\to\infty$, and $\gamma+h_k\in P_2$ for $k\in\setN$.
Let $x(s):=x(s,\gamma)$, $x^k(s):=x(s,\gamma+h_k)$, $u(s):=s-\tau(s,x_s,\xi)$,
and $u^k(s):=s-\tau(s,x^k_s,\xi+h^\xi_k)$. 
Then there exists a nonnegative sequence $c_{0,k}$ such that 
$c_{0,k}\to0$ as $k\to\infty$, and
\begin{equation}\label{e395}
|L(s,x^k)h-L(s,x)h|
\leq c_{0,k}|h|_\Gamma+L_1L_2|\dot x(u^k(s))-\dot x(u(s))||h|_\Gamma 
\end{equation}
for a.e.\ $s\in[0,\alpha]$, $k\in\setN$ and $h\in\Gamma$.
\end{lemma}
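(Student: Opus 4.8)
The plan is to expand $L(s,x^k)h$ and $L(s,x)h$ according to \eref{e26} and estimate the difference term by term, organising the algebra so that the one contribution which does not become small uniformly in $s$ is isolated and carries exactly the constant $L_1L_2$. Throughout put $x(s):=x(s,\gamma)$, $x^k(s):=x(s,\gamma+h_k)$, $u(s):=s-\tau(s,x_s,\xi)$, $u^k(s):=s-\tau(s,x^k_s,\xi+h^\xi_k)$ and $h=(h^\phi,h^\theta,h^\xi)\in\Gamma$; write $A^k,B^k,C^k$ (resp.\ $A,B,C$) for the factors $D_2f,D_3f,D_4f$ evaluated at $(s,x^k_s,x^k(u^k(s)),\theta+h^\theta_k)$ (resp.\ at $(s,x_s,x(u(s)),\theta)$), $P^k,Q^k$ (resp.\ $P,Q$) for $D_2\tau,D_3\tau$ at $(s,x^k_s,\xi+h^\xi_k)$ (resp.\ at $(s,x_s,\xi)$), and $d^k:=\dot x^k(u^k(s))$, $d:=\dot x(u(s))$, $e^k:=h^\phi(-\tau(s,x^k_s,\xi+h^\xi_k))$, $e:=h^\phi(-\tau(s,x_s,\xi))$. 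Telescoping repeatedly through $a^kb^k-ab=(a^k-a)b^k+a(b^k-b)$ one writes
\begin{eqnarray*}
L(s,x^k)h-L(s,x)h&=&(A^k-A)h^\phi+(C^k-C)h^\theta+(B^k-B)\bigl(-d^k(P^kh^\phi+Q^kh^\xi)+e^k\bigr)\\
&&{}-Bd^k\bigl[(P^k-P)h^\phi+(Q^k-Q)h^\xi\bigr]-B(d^k-d)(Ph^\phi+Qh^\xi)+B(e^k-e).
\end{eqnarray*}

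Before estimating I would record the uniform bounds available under the present hypotheses: by (A1)(ii) and (A2)(ii) the operator norms of $D_if$ and $D_j\tau$ are at most $L_1$ and $L_2$; by \eref{e37} one has $|\dot x^k(\sigma)|\le N$ and $|\dot x(\sigma)|\le N$ for a.e.\ $\sigma\in[-r,\alpha]$; $|e^k|,|e|\le|h^\phi|_C$; and — the observation that yields the \emph{sharp} constant — $|h^\phi|_C+|h^\xi|_\Xi\le|h|_\Gamma$. It matters which of these hold only for a.e.\ $s$: since $\gamma,\gamma+h_k\in P_2$, the lag maps $u$ and $u^k$ lie in ${\cal PM}([0,\alpha^*],[-r,\alpha^*])$, so by Lemma~\ref{l3} the compositions $\dot x\circ u$, $\dot x\circ u^k$ and $\dot x^k\circ u^k$ are defined for a.e.\ $s$ and bounded by $N$, while on $[\alpha^*,\alpha]$ one has $u(s),u^k(s)\in[0,\alpha]$, where the solutions are $C^1$, so no composition issue arises there. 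Next, $D_if$ and $D_j\tau$ are continuous by (A1)(iii), (A2)(iii), hence uniformly continuous on the compact sets $[0,\alpha]\times M_1\times M_2\times\Theta_0$ and $[0,\alpha]\times M_1\times\Xi_0$, where $M_1$ is compact in $C$ (bounded in $\Ww$ and closed in $C$, by Arzel\`a--Ascoli), $M_2$ is compact, and $\Theta_0:=\{\theta\}\cup\{\theta+h^\theta_k\st k\in\setN\}$, $\Xi_0:=\{\xi\}\cup\{\xi+h^\xi_k\st k\in\setN\}$ are compact, being convergent sequences together with their limits; combined with \eref{e25} and \eref{e88}, which make the data arguments converge at rate $O(|h_k|_\Gamma)$ uniformly in $s$, this yields the null sequences
$$\omega_{i,k}:=\sup_{s\in[0,\alpha]}\bigl|D_if(s,x^k_s,x^k(u^k(s)),\theta+h^\theta_k)-D_if(s,x_s,x(u(s)),\theta)\bigr|\ \ (i=2,3,4),$$
$$\omega'_{j,k}:=\sup_{s\in[0,\alpha]}\bigl|D_j\tau(s,x^k_s,\xi+h^\xi_k)-D_j\tau(s,x_s,\xi)\bigr|\ \ (j=2,3),$$
both tending to $0$ as $k\to\infty$.

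Finally one bounds the six terms. Using $|h^\phi|_C\le|h|_\Gamma$, $|h^\phi|_C+|h^\xi|_\Xi\le|h|_\Gamma$ and the bounds above, the first three are $\le\omega_{2,k}|h|_\Gamma$, $\le\omega_{4,k}|h|_\Gamma$, $\le(NL_2+1)\omega_{3,k}|h|_\Gamma$; the fourth is $\le L_1N(\omega'_{2,k}+\omega'_{3,k})|h|_\Gamma$; the sixth, via Lemma~\ref{ll12b} together with (A2)(ii) and \eref{e25}, is $\le L_1L_2(L+1)|h_k|_\Gamma|h|_\Gamma$. The fifth term is the decisive one: $|{-}B(d^k-d)(Ph^\phi+Qh^\xi)|\le L_1L_2(|h^\phi|_C+|h^\xi|_\Xi)\,|d^k-d|\le L_1L_2|h|_\Gamma\,|d^k-d|$, and splitting $d^k-d=\bigl(\dot x^k(u^k(s))-\dot x(u^k(s))\bigr)+\bigl(\dot x(u^k(s))-\dot x(u(s))\bigr)$, the first summand is $\le c_1|h_k|_\Gamma$ for a.e.\ $s$ — because $\dot x^k-\dot x$ equals $\dot h^\phi_k$ on $[-r,0]$ and, on $[0,\alpha]$, is a difference of values of $f$ controlled by (A1)(ii) with \eref{e25} and \eref{e88}, and the a.e.\ bound on $[-r,\alpha]$ is carried through $u^k\in{\cal PM}$ by Lemma~\ref{l3} — while the second summand is exactly the term left in \eref{e395}. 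Collecting everything, \eref{e395} holds with
$$c_{0,k}:=\omega_{2,k}+\omega_{4,k}+(NL_2+1)\omega_{3,k}+L_1N(\omega'_{2,k}+\omega'_{3,k})+L_1L_2(c_1+L+1)|h_k|_\Gamma,$$
which is independent of $s$ and $h$ and tends to $0$. The step I expect to be the main obstacle is establishing the uniform-in-$s$ decay of the derivative differences $\omega_{i,k},\omega'_{j,k}$ — which is why ``$M_3,M_4$ bounded'' must be upgraded to ``$\Theta_0,\Xi_0$ compact'' — and, entangled with it, the careful bookkeeping of which estimates hold only for a.e.\ $s$, where one leans throughout on the ${\cal PM}$-membership of $u,u^k$ via Lemma~\ref{l3}.
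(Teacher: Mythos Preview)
Your proof is correct and follows essentially the same approach as the paper: expand $L(s,x^k)h-L(s,x)h$ via \eref{e26}, telescope the product differences, and bound each piece using the Lipschitz constants $L_1,L_2$, the uniform bounds \eref{e37}, \eref{e25}, \eref{e88}, and the moduli of continuity of $D_if$ and $D_j\tau$, isolating the single term carrying $|\dot x(u^k(s))-\dot x(u(s))|$ with coefficient $L_1L_2$.

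There are two minor organisational differences worth noting. First, the paper attaches $D_2\tau^k,D_3\tau^k$ (your $P^k,Q^k$) to the isolated term rather than $P,Q$, and splits $d^k-d$ into $(\dot x^k(u^k)-\dot x(u^k))+(\dot x(u^k)-\dot x(u))$ already at the level of the decomposition; your version is equally valid and yields the same constant. Second, the paper packages the derivative-difference bounds via the global moduli $\Omega_f,\Omega_\tau$ of \eref{e29a}--\eref{e29} (over all of $M_3,M_4$), whereas you restrict to the compact sets $\Theta_0,\Xi_0$ generated by the specific sequence in order to justify $\omega_{i,k},\omega'_{j,k}\to0$; your argument is in fact more careful on this point, since $M_3,M_4$ are only assumed closed and bounded, and the paper itself adopts exactly your device later (defining $M_3^*,M_4^*$ in the proof of Lemma~\ref{l31}).
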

\begin{proof}
We have
 \begin{eqnarray*}
\balra{1cm}{ L(s,x^k)(h^\phi,h^\theta,h^\xi)-L(s,x)(h^\phi,h^\theta,h^\xi)}\nl
&=& \Bigl(D_2f(s,x^k_s,x^k(u^k(s)),\theta+h^\theta_k)-D_2f(s,x_s,x(u(s)),\theta)\Bigr)h^\phi\nl
&&+\Bigl(D_3f(s,x^k_s,x^k(u^k(s)),\theta+h^\theta_k)-D_3f(s,x_s,x(u(s)),\theta)\Bigr)\nl
&&\quad\times\Bigl(-\dot x^k(u^k(s))\Bigr)\Bigl(D_2\tau(s,x^k_s,\xi+h^\xi_k)h^\phi+D_3\tau(s,x^k_s,\xi+h^\xi_k)h^\xi\Bigr)\nl
&&+D_3f(s,x_s,x(u(s)),\theta)\Bigl(-\dot x^k(u^k(s))+\dot x(u^k(s)))\Bigr)\nl
&&\quad\times\Bigl(D_2\tau(s,x^k_s,\xi+h^\xi_k)h^\phi+D_3\tau(s,x^k_s,\xi+h^\xi_k)h^\xi\Bigr)\nl
&&+D_3f(s,x_s,x(u(s)),\theta)\Bigl(-\dot x(u^k(s))+\dot x(u(s)))\Bigr)\nl
&&\quad\times\Bigl(D_2\tau(s,x^k_s,\xi+h^\xi_k)h^\phi+D_3\tau(s,x^k_s,\xi+h^\xi_k)h^\xi\Bigr)\nl
&&+D_3f(s,x_s,x(u(s)),\theta)\Bigl(-\dot x(u(s))\Bigr)\nl
&&\quad\times\Bigl[\Bigr(D_2\tau(s,x^k_s,\xi+h^\xi_k)-D_2\tau(s,x_s,\xi)\Bigr)h^\phi\nl
&&\qquad\quad +\Bigl(D_3\tau(s,x^k_s,\xi+h^\xi_k)-D_3\tau(s,x_s,\xi)\Bigr)h^\xi\Bigr]\nl
&&+\Bigl(D_3f(s,x^k_s,x^k(u^k(s)),\theta+h^\theta_k)-D_3f(s,x_s,x(u(s)),\theta)\Bigr)h^\phi(-\tau(s,x^k_s,\xi+h^\xi_k))\nl
&&+D_3f(s,x_s,x(u(s)),\theta)\Bigl(h^\phi(-\tau(s,x^k_s,\xi+h^\xi_k))-h^\phi(-\tau(s,x_s,\xi))\Bigr)\nl
&&+\Bigl(D_4f(s,x^k_s,x^k(u^k(s)),\theta+h^\theta_k)-D_4f(s,x_s,x(u(s)),\theta)\Bigr)h^\theta,\qquad s\in[0,\alpha].
\end{eqnarray*}
Relations \eref{e37}, \eref{e25},  \eref{e88} and the Mean Value Theorem give
\begin{eqnarray}\label{e345}
|x^k(u^k(s))-x(u(s))|
&\leq& |x^k(u^k(s))-x(u^k(s))|+|x(u^k(s))-x(u(s))|\nl
&\leq& L|h_k|_\Gamma+N| u^k(s)-u(s)|\nl
&\leq& K_2|h_k|_\Gamma,\
\end{eqnarray}
with $K_2:=L+ NK_0$, 
\begin{equation}\label{e355}
|x^k_s-x_s|_C+|x^k(u^k(s))-x(u(s))|+|h^\theta_k|_\Theta
\leq K_3|h_k|_\Gamma,
\end{equation}
with $K_3:=L+K_2+1$, and
\begin{equation}\label{e396}
|x^k_s-x_s|_C+|h^\xi_k|_\Xi\leq (L+1)|h_k|_\Gamma.
\end{equation}
Combining the above estimates with (A1) (ii),  (A2) (ii), \eref{e37}, \eref{e25}, \eref{e88} 
 and the definition of $\Omega_f$ and $\Omega_\tau$ we get
\begin{eqnarray*}
\balra{1cm}{ |L(s,x^k)(h^\phi,h^\theta,h^\xi)-L(s,x)(h^\phi,h^\theta,h^\xi)|}\nl
&\leq& \Omega_f\Bigl(K_3|h_k|_\Gamma\Bigr)|h^\phi|_C
+\Omega_f\Bigl(K_3|h_k|_\Gamma\Bigr)NL_2(|h^\phi|_C+|h^\xi|_\Xi)\nl
&&+L_1L|h_k|_\Gamma L_2(|h^\phi|_C+|h^\xi|_\Xi)
+L_1\Bigl|\dot x(u^k(s))-\dot x(u(s))\Bigr| L_2(|h^\phi|_C+|h^\xi|_\Xi)\nl
&&+L_1N\Omega_\tau\Bigl((L+1)|h_k|_\Gamma)\Bigr)(|h^\phi|_C+|h^\xi|_\Xi)
+\Omega_f\Bigl(K_3|h_k|_\Gamma\Bigr)|h^\phi|_C\nl
&&+L_1|\dot h^\phi|_\Lw K_0|h_k|_\Gamma
+\Omega_f\Bigl(K_3|h_k|_\Gamma\Bigr)|h^\theta|_\Theta,
\qquad s\in[0,\alpha],\qquad\quad 
\end{eqnarray*}
which yields \eref{e395} with $c_{0,k}:=N_0\Omega_f\Bigl(K_3|h_k|_\Gamma\Bigr)
+L_1L_2L|h_k|_\Gamma+L_1N\Omega_\tau\Bigl((L+1)|h_k|_\Gamma\Bigr)+L_1K_0|h_k|_\Gamma$,
where $N_0$ is defined by \eref{e47}.
\end{proof}
\medskip

\begin{lemma}\label{l8}
Assume (A1) (i)--(iii), (A2) (i)--(v). Let  
 $\gamma\in P_2$, and $x(t):=x(t,\gamma)$  for $t\in[-r,\alpha]$. 
Let $ h\in C\times\Omega\times\Xi$ and let $z(t,\gamma, h)$ be the corresponding solution of the
\IVP{e3}{e4} on $[-r,\alpha]$. Then 
 the maps 
$$
\setR\times\Gamma\supset [0,\alpha]\times P_2 \to \calL(\Gamma,\setR^n),
\quad (t,\gamma)\mapsto z(t,\gamma,\cdot)
$$ 
and
$$
\setR\times\Gamma\supset [0,\alpha]\times P_2 \to \calL(\Gamma,C),
\quad (t,\gamma)\mapsto z_t(\cdot,\gamma,\cdot)
$$
are continuous.
\end{lemma}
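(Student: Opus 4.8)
The plan is to reduce both continuity statements to a single Gronwall estimate, combining the uniform bounds of Lemma~\ref{l7}, the perturbation inequality of Lemma~\ref{l33}, and the convergence property of Lemma~\ref{l4}. Since $z(t,\gamma,\cdot)$ and $z_t(\cdot,\gamma,\cdot)$ are linear in the direction, it suffices to control $|z(t,\gamma,h)-z(\bar t,\bar\gamma,h)|$ and $|z_t(\cdot,\gamma,h)-z_{\bar t}(\cdot,\bar\gamma,h)|_C$ uniformly over $h$ in the unit ball of $\Gamma$; I would treat the dependence on $t$ and on $\gamma$ separately and then combine them.

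Continuity in $t$, uniformly in $\gamma\in P_2$, is the routine part: from \eref{e26}, \eref{e368} and Lemma~\ref{l7}(i) one has $|\dot z(t,\gamma,h)|\le L_1N_0(N_1+1)|h|_\Gamma$ for a.e.\ $t\in[0,\alpha]$, while on $[-r,0]$ one has $\dot z(\cdot,\gamma,h)=\dot h^\phi$ with $|\dot h^\phi|_\Lw\le|h|_\Gamma$; hence $z(\cdot,\gamma,h)$ is Lipschitz on $[-r,\alpha]$ with a constant proportional to $|h|_\Gamma$ and independent of $\gamma\in P_2$, which gives the required modulus of continuity in $t$ for both maps.

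For continuity in $\gamma$, fix $\gamma=(\phi,\theta,\xi)\in P_2$ and a sequence $\gamma_k=\gamma+h_k\in P_2$ with $|h_k|_\Gamma\to0$, put $x:=x(\cdot,\gamma)$, $x^k:=x(\cdot,\gamma_k)$, $u(s):=s-\tau(s,x_s,\xi)$, $u^k(s):=s-\tau(s,x^k_s,\xi+h^\xi_k)$, and, for a fixed $h=(h^\phi,h^\theta,h^\xi)\in\Gamma$, set $w^k:=z(\cdot,\gamma_k,h)-z(\cdot,\gamma,h)$. Then $w^k\equiv0$ on $[-r,0]$ and for a.e.\ $t\in[0,\alpha]$ I split
$$\dot w^k(t)=L(t,x^k)(w^k_t,0,0)+\bigl[L(t,x^k)-L(t,x)\bigr]\bigl(z_t(\cdot,\gamma,h),h^\theta,h^\xi\bigr).$$
The first term is $\le L_1N_0|w^k_t|_C$ by \eref{e368}. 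For the second I apply Lemma~\ref{l33} with the test direction $(z_s(\cdot,\gamma,h),h^\theta,h^\xi)\in\Gamma$, whose $\Gamma$-norm is at most $(N_2+1)|h|_\Gamma$ by Lemma~\ref{l7}(ii); after integration over $[0,\alpha]$ this contributes a bound $c_k|h|_\Gamma$ with
$$c_k:=(N_2+1)\Bigl(\alpha c_{0,k}+L_1L_2\int_0^\alpha|\dot x(u^k(s))-\dot x(u(s))|\,ds\Bigr),$$
where $c_{0,k}\to0$ is the sequence from Lemma~\ref{l33}. It remains to show the last integral tends to $0$: on $[\alpha^*,\alpha]$ the functions $u,u^k$ take values in $[0,\alpha]$, where $\dot x$ is continuous, and $|u^k-u|_{C([0,\alpha],\setR)}\to0$ by Lemma~\ref{l11}, so that part vanishes by uniform continuity; on $[0,\alpha^*]$ one has $u,u^k\in{\cal PM}([0,\alpha^*],[-r,\alpha^*])$ by the definition \eref{e317} of $P_2$, $\dot x|_{[-r,\alpha^*]}\in\Lw$, and $|u^k-u|_{\Ww([0,\alpha^*],\setR)}\to0$ by Lemma~\ref{l11} (here (A2)(iv),(v) are used), so Lemma~\ref{l4} applies and forces $\int_0^{\alpha^*}|\dot x(u^k(s))-\dot x(u(s))|\,ds\to0$. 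Hence $c_k\to0$.

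Combining the two bounds gives $|w^k(t)|\le c_k|h|_\Gamma+L_1N_0\int_0^t|w^k_s|_C\,ds$ with $|w^k_0|_C=0\le c_k|h|_\Gamma$, so Lemma~\ref{l0} yields $|w^k_t|_C\le c_kN_1|h|_\Gamma$ for all $t\in[0,\alpha]$. Taking the supremum over $|h|_\Gamma\le1$ shows $|z(t,\gamma_k,\cdot)-z(t,\gamma,\cdot)|_{\calL(\Gamma,\setR^n)}$ and $|z_t(\cdot,\gamma_k,\cdot)-z_t(\cdot,\gamma,\cdot)|_{\calL(\Gamma,C)}$ are both $\le c_kN_1\to0$, uniformly in $t\in[0,\alpha]$; together with the uniform Lipschitz continuity in $t$ this yields joint continuity of both maps. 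The only genuinely nonroutine step is the vanishing of $\int_0^{\alpha^*}|\dot x(u^k(s))-\dot x(u(s))|\,ds$, which is exactly where the piecewise strict monotonicity built into $P_2$, together with the $\Ww$-convergence $u^k\to u$, is indispensable, via Lemma~\ref{l4}.
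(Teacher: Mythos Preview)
Your proof is correct and follows essentially the same route as the paper: you split $\dot z(\cdot,\gamma_k,h)-\dot z(\cdot,\gamma,h)$ into the $L(s,x^k)(w^k_s,0,0)$ term and the $[L(s,x^k)-L(s,x)](z_s,h^\theta,h^\xi)$ term, bound the latter via Lemma~\ref{l33} with the direction $(z_s,h^\theta,h^\xi)$ of $\Gamma$-norm at most $(N_2+1)|h|_\Gamma$, and close with Lemma~\ref{l0}; the paper does precisely this, obtaining the same constant $c_{1,k}N_1$. Your explicit splitting of $\int_0^\alpha|\dot x(u^k(s))-\dot x(u(s))|\,ds$ into $[0,\alpha^*]$ (handled by Lemma~\ref{l4}) and $[\alpha^*,\alpha]$ (handled by continuity of $\dot x$ on $[0,\alpha]$) is in fact slightly more careful than the paper's one-line appeal to Lemma~\ref{l4}, since $P_2$ only guarantees $u,u^k\in{\cal PM}$ on $[0,\alpha^*]$.
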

\begin{proof}
Let $\gamma\in P_2$ be fixed, 
and let
$h_k=(h^\phi_k,h^\theta_k,h^\xi_k)\in \Gamma$ ($k\in\setN$) be a sequence such that
$|h_k|_\Gamma\to0$ as $k\to\infty$ and $\gamma+h_k\in P_2$ for $k\in\setN$.
For a fixed  $ h=(h^\phi,h^\theta,h^\xi)\in\Gamma$ we define the short notations
 $x^k(t):=x(t,\gamma+h_k)$, $x(t):=x(t,\gamma)$,
$u^k(t):=t-\tau(t,x^k_t,\xi+h^\xi_k)$, $u(t):=t-\tau(t,x_t,\xi)$,
$z^{k,h}(t):=z(t,\gamma+h_k, h)$ and $z^h(t):=z(t,\gamma, h)$. 
The functions $z^{k,h}$ and $z^{h}$ satisfy
\begin{eqnarray*}
z^{k,h}(t)&=& h^\phi(0)+\int_{0}^t L(s,x^k)(z^{k,h}_s,h^\theta,h^\xi)\,ds,\qquad t\in[0,\alpha],\\
z^{h}(t) &=& h^\phi(0)+\int_{0}^t L(s,x)(z^{h}_s,h^\theta,h^\xi)\,ds,\qquad t\in[0,\alpha],
\end{eqnarray*}
and therefore  
for $t\in[0,\alpha]$
\begin{equation}\label{e320}
|z^{k,h}(t)-z^{h}(t)|\leq \int_{0}^t \Bigl|\Bigl(L(s,x^k)-L(s,x)\Bigr)(z^{h}_s,h^\theta,h^\xi)
+L(s,x^k)(z^{k,h}_s-z^{h}_s,0,0)\Bigr|\,ds.
\end{equation}
We have by \eref{e94} and $N_2\geq1$
\begin{equation}\label{e397}
|(z^h_s,h^\theta,h^\xi)|_\Gamma \leq N_2 |h|_\Gamma+|h^\theta|_\Theta+|h^\xi|_\Xi
\leq (N_2+1) |h|_\Gamma.
\end{equation}
Then   \eref{e368},   \eref{e395},  \eref{e320} and \eref{e397} imply
\begin{equation}\label{e52}
|z^{k,h}(t)-z^{h}(t)|\leq c_{1,k}| h|_\Gamma+\int_{0}^t L_1N_0|z^{k,h}_s-z^{h}_s|_C\,ds,
\qquad t\in[0,\alpha],
\end{equation}
where $c_{1,k}$ is defined by
\begin{eqnarray*}
 c_{1,k} &:=& \alpha c_{0,k}(N_2+1)+ L_1L_2(N_2+1)\int_{0}^\alpha |\dot x(u^k(s))-\dot {x}(u(s))|\,ds.
\end{eqnarray*}
Relation \ref{e332} and Lemma~\ref{l4}  yield that 
\begin{equation}\label{e400}
\lim_{k\to\infty} \int_{\alpha}^\alpha |\dot x(u^k(s))-\dot {x}(u(s))|\,ds=0.
\end{equation}
Hence  $c_{1,k}\to0$ as $k\to\infty$. 

Lemma~\ref{l0} is applicable for \eref{e52} since $|z^{k,h}_{0}-z^{h}_{0}|_C=0$,  and it gives
\begin{equation}\label{e344}
|z^{k,h}(t)-z^{h}(t)|\leq |z^{k,h}_t-z^{h}_t|_C\leq c_{1,k}N_1| h|_\Gamma,\qquad  t\in[0,\alpha],
\end{equation}
where $N_1:=e^{L_1N_0\alpha}$.
Therefore we get for $t\in[0,\alpha]$
\begin{equation}\label{e53}
 |z(t,\gamma+h_k,\cdot)-z(t,\gamma,\cdot)|_{\calL(\Ww,\setR^n)}
\leq |z_{t}(\cdot,\gamma+h_k,\cdot)-z_{t}(\cdot,\gamma,\cdot)|_{\calL(\Ww,C)}
\leq c_{1,k}N_1
\end{equation}
for all $k\in\setN$. 

Let $t\in[0,\alpha]$ be fixed, and let $\nu_k$ be a sequence of real numbers such that
$t+\nu_k\in[0,\alpha]$ for $k\in\setN$ and $\nu_k\to 0$ as $k\to\infty$. 
  Then \eref{e94} and the Mean Value Theorem yield
$$
|z_{t+\nu_k}(\cdot,\gamma+h_k,\cdot)-z_{t}(\cdot,\gamma+h_k,\cdot)|_{\calL(\Gamma,C)}
\leq N_2|\nu_k|, \qquad k\geq k_0.
$$
Combining this relation  with \eref{e53} and $c_{1,k}\to0$ we get
\begin{eqnarray*}
\balra{0.5cm}{|z(t+\nu_k,\gamma+h_k,\cdot)-z(t,\gamma,\cdot)|_{\calL(\Gamma,\setR^n)}}\nl
&\leq& |z_{t+\nu_k}(\cdot,\gamma+h_k,\cdot)-z_{t}(\cdot,\gamma,\cdot)|_{\calL(\Gamma,C)}\nl
&\leq& |z_{t+\nu_k}(\cdot,\gamma+h_k,\cdot)-z_t(\cdot,\gamma+h_k,\cdot)|_{\calL(\Gamma,C)}
+|z_t(\cdot,\gamma+h_k,\cdot)-z_t(\cdot,\gamma,\cdot)|_{\calL(\Gamma,C)}\nl
&\leq& N_2|\nu_k|+c_{1,k}N_1\nl
&\to& 0,\qquad \mbox{as } k\to\infty.
\end{eqnarray*}
This completes the proof.
\end{proof}
\bigskip

\begin{remark}\label{r66}\rm
 Note that if in the statement of Lemma~\ref{l8} the parameter set $P_2$ is replaced by the smaller set 
$P_1$, then assumptions (A2) (iv) and (v) are not needed to prove the statement, since
in this case \eref{e88} and Lemma~\ref{l1} can be used to show that $c_{1,k}\to0$ as
$k\to\infty$.
\end{remark}
\medskip

Now we are ready to prove the Fr\'echet-differentiability of the function
 $x(t,\gamma)$  wrt $\gamma$. We will denote this derivative by 
$D_2 x(t,\gamma)$.
\medskip

\begin{theorem}\label{t2}
Assume (A1) (i)--(iii), (A2) (i)--(v), and let $P_2$ be defined by \eref{e317}.
Then the functions
$$
\setR\times \Gamma\supset [0,\alpha]\times P\to\setR^n,\qquad 
(t,\gamma)\mapsto x(t,\gamma)
$$
and
$$
\setR\times \Gamma\supset [0,\alpha]\times P \to C,\qquad 
(t,\gamma)\mapsto x_t(\cdot,\gamma)
$$
 are both differentiable wrt $\gamma$ for every $\gamma\in P_2$, and
\begin{equation}\label{e33}
D_2 x(t,\gamma) h=z(t,\gamma, h),\qquad  h\in\Gamma,\ 
t\in[0,\alpha],\ \gamma\in P_2,
\end{equation}
and
\begin{equation}\label{e60}
D_2 x_t(\cdot,\gamma) h=z_t(\cdot,\gamma, h),\qquad  h\in\Gamma,\ 
t\in[0,\alpha],\ \gamma\in P_2,
\end{equation}
where $z(t,\gamma, h)$ is the solution of the \IVP{e3}{e4} for 
$t\in[0,\alpha]$,\ $\gamma\in P_2$ and $ h\in\Gamma$.
Moreover, the functions 
$$
\setR\times\Gamma\supset [0,\alpha]\times P_2\to\calL(\Gamma,\setR^n),\qquad (t,\gamma)\mapsto D_2 x(t,\gamma) 
$$
and
$$
\setR\times\Gamma\supset [0,\alpha]\times P_2\to \calL(\Gamma,C),\qquad (t,\gamma)\mapsto D_2 x_t(\cdot,\gamma) 
$$
are continuous.
\end{theorem}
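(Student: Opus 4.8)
The plan is to establish \eref{e33} first; the formula \eref{e60} and the continuity statements will then follow with little extra work. Fix $\gamma=(\phi,\theta,\xi)\in P_2$, write $x(t):=x(t,\gamma)$, and for an increment $h_k=(h^\phi_k,h^\theta_k,h^\xi_k)\in\Gamma$ with $\gamma+h_k\in P$ and $|h_k|_\Gamma\to0$, set $x^k(t):=x(t,\gamma+h_k)$, $u(t):=t-\tau(t,x_t,\xi)$, $u^k(t):=t-\tau(t,x^k_t,\xi+h^\xi_k)$. Let $z^k(t):=z(t,\gamma,h_k)$ be the solution of the variational \IVP{e3}{e4} and put $w^k(t):=x^k(t)-x(t)-z^k(t)$. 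The goal is to show $\sup_{t\in[0,\alpha]}|w^k(t)|=o(|h_k|_\Gamma)$. On $[-r,0]$ we have $w^k\equiv0$. For $t\in[0,\alpha]$, subtracting the integrated forms of \eref{e1} and \eref{e3} and inserting/removing the linearization terms, I would write
\begin{equation*}
w^k(t)=\int_0^t\Bigl[L(s,x)(w^k_s,0,0)+R^k(s)\Bigr]\,ds,
\end{equation*}
where $R^k(s)$ collects all the remainder terms. The idea is that $|R^k(s)|$, integrated over $[0,\alpha]$, is $o(|h_k|_\Gamma)$; then \eref{e368} gives $|w^k(t)|\le L_1N_0\int_0^t|w^k_s|_C\,ds+\int_0^\alpha|R^k(s)|\,ds$, and since $|w^k_0|_C=0$, Lemma~\ref{l0} yields $|w^k_t|_C\le N_1\int_0^\alpha|R^k(s)|\,ds=o(|h_k|_\Gamma)$, which is exactly \eref{e33} and (taking the sup over the segment) \eref{e60}.

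The bulk of the work is organizing $R^k(s)$ and bounding its integral. Splitting $f(s,x^k_s,x^k(u^k(s)),\theta+h^\theta_k)-f(s,x_s,x(u(s)),\theta)$ via the $\omega_f$ notation of \eref{e17}, the ``genuinely nonlinear'' part is controlled by \eref{e18} of Lemma~\ref{l6}. The terms coming from the argument $x^k(u^k(s))-x(u(s))$ must be compared with the variational contribution $-\dot x(u(s))(D_2\tau\, z^k_s+D_3\tau\, h^\xi_k)+z^k_s(-\tau(s,x_s,\xi))$. Here I would decompose
\begin{equation*}
x^k(u^k(s))-x(u(s))=\bigl[x^k(u^k(s))-x(u^k(s))\bigr]+\bigl[x(u^k(s))-x(u(s))-\dot x(u(s))(u^k(s)-u(s))\bigr]+\dot x(u(s))(u^k(s)-u(s)).
\end{equation*}
The middle bracket, integrated against the bounded factor $D_3f$, is $o(|h_k|_\Gamma)$ by Lemma~\ref{l5} — this is precisely where membership of $u$ and $u^k$ in $\mathcal{PM}([0,\alpha^*],[-r,\alpha^*])$ together with the $\Ww$-estimate \eref{e332} of Lemma~\ref{l11} is used, and it is the reason (A2)(iv)–(v) appear in the hypotheses. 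The last term, combined with the $\omega_\tau$ expansion \eref{e22} of $u^k(s)-u(s)$ (whose remainder is handled by \eref{e21}), produces the variational $\tau$-terms up to an error controlled by $\int_0^\alpha|x^k(u^k(s))-x(u(s))|\,|{\rm something\ bounded}|$; the first bracket above is $\le L|h_k|_\Gamma$ by \eref{e25}. One also has to replace $\dot x^k(u^k(s))$ by $\dot x(u(s))$ inside the $D_3f$ factor — this introduces $\int_0^\alpha|\dot x(u^k(s))-\dot x(u(s))|\,ds$, which tends to $0$ by Lemma~\ref{l4} and \eref{e332} — and to replace the coefficient operators $D_if$, $D_i\tau$ evaluated along $x^k$ by those along $x$, which is $o(1)$ uniformly by the moduli $\Omega_f,\Omega_\tau$ of \eref{e29a}–\eref{e29} together with \eref{e355}, \eref{e396}; all of this is already assembled in Lemma~\ref{l33}. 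Collecting, $\int_0^\alpha|R^k(s)|\,ds=o(|h_k|_\Gamma)$.

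For the continuity of $(t,\gamma)\mapsto D_2x(t,\gamma)$ and $(t,\gamma)\mapsto D_2x_t(\cdot,\gamma)$ on $[0,\alpha]\times P_2$, by \eref{e33}–\eref{e60} these maps are $(t,\gamma)\mapsto z(t,\gamma,\cdot)$ and $(t,\gamma)\mapsto z_t(\cdot,\gamma,\cdot)$, whose continuity is exactly the content of Lemma~\ref{l8}. So this part requires no new argument. The main obstacle is the bookkeeping in the previous paragraph: making sure every remainder term is matched against either a Lemma~\ref{l5}/\ref{l6}-type $o(|h_k|_\Gamma)$ estimate or against $\int_0^t|w^k_s|_C\,ds$, with no leftover term of size merely $O(|h_k|_\Gamma)$. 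In particular, the decomposition of $x^k(u^k(s))-x(u(s))$ must be done \emph{before} applying the mean value theorem, so that the non-differentiability of $x$ on $[-r,0]$ is absorbed through the $\mathcal{PM}$-machinery rather than through a naive Lipschitz bound, which would only give $O(|h_k|_\Gamma)$ and would be fatal.
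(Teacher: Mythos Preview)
Your outline has the right skeleton---Gronwall on $w^k$ after showing $\int_0^\alpha|R^k(s)|\,ds=o(|h_k|_\Gamma)$, with Lemma~\ref{l5} for the $x(u^k)-x(u)-\dot x(u)(u^k-u)$ piece, Lemma~\ref{l6} for $\omega_f,\omega_\tau$, and Lemma~\ref{l8} for continuity---and this matches the paper. But there is a genuine gap in your treatment of the first bracket $x^k(u^k(s))-x(u^k(s))$. After your three-term splitting of $x^k(u^k(s))-x(u(s))$, the middle and last terms account for the $-\dot x(u(s))\bigl(D_2\tau\,z^{h_k}_s+D_3\tau\,h^\xi_k\bigr)$ part of the variational contribution (plus a Gronwall term $-\dot x(u(s))D_2\tau\,w^k_s$ and the $o(|h_k|)$ remainder $\omega_\tau$). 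But the \emph{remaining} variational piece $z^{h_k}(u(s))=z^{h_k}_s(-\tau(s,x_s,\xi))$ still has to be subtracted, and it has to cancel against the first bracket. Your bound ``$\le L|h_k|_\Gamma$ by \eref{e25}'' is only $O(|h_k|_\Gamma)$, precisely the fatal leftover you warn against at the end. The correct move (the paper's \eref{e55b}) is
\[
\bigl[x^k(u^k(s))-x(u^k(s))\bigr]-z^{h_k}(u(s))
= w^k(u^k(s))+\bigl[z^{h_k}(u^k(s))-z^{h_k}(u(s))\bigr];
\]
since $u^k(s)\in[s-r,s]$, the first term is $\le|w^k_s|_C$ and feeds the Gronwall integral, while the second is $O(|h_k|_\Gamma^2)$ by \eref{e94} and \eref{e88} (this is \eref{e89}). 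Without this step the argument does not close.

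A secondary issue: your steps ``replace $\dot x^k(u^k(s))$ by $\dot x(u(s))$'' and ``replace the coefficient operators $D_if,D_i\tau$ along $x^k$ by those along $x$'' do not belong to the differentiability argument. In the integrand $f(s,x^k_s,\ldots)-f(s,x_s,\ldots)-L(s,x)(z^{h_k}_s,\ldots)$ there is no $\dot x^k$, and after invoking $\omega_f$ the derivatives $D_if,D_i\tau$ appear only along the fixed $(x_s,x(u(s)),\theta)$ and $(x_s,\xi)$. Those replacements (and Lemma~\ref{l33}, Lemma~\ref{l4}) are the content of the continuity proof in Lemma~\ref{l8}, not of the differentiability proof; importing them here is harmless but signals a conflation of the two computations.
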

\begin{proof}
Let $\gamma=(\phi,\theta,\xi)\in P_2$ be fixed, and
let $h_k=(h^\phi_k,h^\theta_k,h^\xi_k)\in\Gamma$ ($k\in\setN$) be a sequence with $|h_k|_\Gamma\to0$ as $k\to\infty$
and $\gamma+h_k\in P$ for $k\in\setN$. To simplify notation, 
let $x^k(t):=x(t,\gamma+h_k)$, $x(t):=x(t,\gamma)$, $u(s):=s-\tau(s,x_s,\xi)$, 
$u^k(s):=s-\tau(s,x^k_s,\xi+h^\xi_k)$ and $z^{h_k}(t):=z(t,\gamma,h_k)$.
Then
\begin{eqnarray*}
x^k(t)&=&\phi(0)+h^\phi_k(0)+\int_0^t f(s,x^k_s,x^k(u^k(s)),\theta+h^\theta_k)\,ds,\qquad t\in[0,\alpha],\\
x(t)&=&\phi(0)+\int_0^t f(s,x_s,x(u(s)),\theta)\,ds,\qquad t\in[0,\alpha],
\end{eqnarray*}
and
$$
z^{h_k}(t)=h^\phi_k(0)+\int_0^t L(s,x)(z^{h_k}_s,h^\theta_k,h^\xi_k)\,ds,\qquad t\in[0,\alpha].
$$
We have 
\begin{eqnarray}\label{e101}
x^k(t)-x(t)-z^{h_k}(t)
&=&\int_0^t \Bigl(f(s,x^k_s,x^k(u^k(s)),\theta+h^\theta_k)-
f(s,x_s,x(u(s)),\theta)\nl
&&\qquad -\ L(s,x)(z^{h_k}_s,h^\theta_k,h^\xi_k)\Bigr)ds.
 \end{eqnarray}
The definitions of $\omega_f$ and $L(s,x)$ (see \eref{e17} and \eref{e26}, 
respectively)  yield for $s\in[0,\alpha]$
\begin{eqnarray}
\balra{0.cm}{f(s,x^k_s,x^k(u^k(s)),\theta+h^\theta_k)-f(s,x_s,x(u(s)),\theta)-L(s,x)(z^{h_k}_s,h^\theta_k,h^\xi_k)}\nl
&=&D_2f(s,x_s,x(u(s)),\theta)(x^k_s-x_s-z^{h_k}_s)
+ D_3f(s,x_s,x(u(s)),\theta)\Bigl(x^k(u^k(s))-x(u(s))\Bigr)\nl
&& +\ D_3f(s,x_s,x(u(s)),\theta)\Bigl(\dot x(u(s))\Bigl(D_2\tau(s,x_s,\xi)z^{h_k}_s+D_3\tau(s,x_s,\xi)h^\xi_k\Bigr)-z^{h_k}(u(s))\Bigr)\nl
&&+\ \omega_f(s,x_s,x(u(s),\theta,x^k_s,x^k(u^k(s)),\theta+h^\theta_k).
\label{e55}
\end{eqnarray}
Relation \eref{e22} and simple manipulations give
\begin{eqnarray}
 \balra{0cm}{x^k(u^k(s))-x(u(s))+\dot x(u(s))\Bigl(D_2\tau(s,x_s,\xi)z^{h_k}_s+D_3\tau(s,x_s,\xi)h^\xi_k\Bigr)-z^{h_k}(u(s))}\nl
&=& x^k(u^k(s))-x(u^k(s))-z^{h_k}(u^k(s))+x(u^k(s))-x(u(s))-\dot x(u(s))(u^k(s)-u(s))\nl
&& -\dot x(u(s))\omega_\tau(s,x_s,\xi,x^k_s,\xi+h^\xi_k)-\dot x(u(s))D_2\tau(s,x_s,\xi)(x^k_s-x_s-z^{h_k}_s)\nl
&& +z^{h_k}(u^k(s))-z^{h_k}(u(s)).\label{e55b}
\end{eqnarray}
Relation \eref{e88} and \eref{e94} imply
\begin{equation}\label{e89}
|z^{h_k}(u^k(s))-z^{h_k}(u(s))|\leq N_2| h_k|_\Gamma|u^k(s)-u(s)|
\leq N_2K_0|h_k|^2_\Gamma.
\end{equation}
Using \eref{e37}, (A1) (ii), (A2) (ii),  and combining  \eref{e101}, \eref{e55}, \eref{e55b} and \eref{e89}
we get 
\begin{eqnarray}
\balra{0.5cm}{|x^k(t)-x(t)-z^{h_k}(t)|}\nl
&\leq&\int_0^t \Bigl[L_1\Bigl(|x^k_s-x_s-z^{h_k}_s|_C+|x^k(u^k(s))-x(u^k(s))-z^{h_k}(u^k(s))|\nl
&&\qquad +\ |x(u^k(s))-x(u(s))-\dot x(u(s))(u^k(s)-u(s))|\nl
 &&\qquad +\ N|\omega_\tau(s,x_s,\xi,x^k_s,\xi+h^\xi_k)|
  + NL_2|x^k_s-x_s-z^{h_k}_s|_C + N_2K_0|h_k|_\Gamma^2\Bigr)\nl
&&\qquad+\ |\omega_f(s,x_s,x(u(s)),\theta,x^k_s,x^k(u^k(s)),\theta+h^\theta_k)|\Bigr]ds,\qquad t\in[0,\alpha].\label{e201}
\end{eqnarray}
Let $N_0$  be defined by \eref{e47}. Then
\begin{equation}\label{e32}
|x^k(t)-x(t)-z^{h_k}(t)| \leq a_k+b_k+c_k+d_k+L_1N_0\int_0^t |x^k_s-x_s-z^{h_k}_s|_C\,ds,\qquad 
t\in[0,\alpha],
\end{equation}
where
\begin{eqnarray}
a_k&:=&\int_0^\alpha|\omega_f(s,x_s,x(u(s)),\theta,x^k_s,x^k(u^k(s)),\theta+h^\theta_k)|\,ds,\label{e57}\\
b_k&:=&L_1N\int_0^\alpha|\omega_\tau(s,x_s,\xi,s,x^k_s,\xi+h^\xi_k)|\,ds,\label{e57b}\\
c_k&:=&L_1\int_0^\alpha\! |x(u^k(s))-x(u(s))-\dot x(u(s))(u^k(s)-u(s))|\,ds,\label{e58}
\end{eqnarray}
and
\begin{equation}\label{e58b}
d_k:=\alpha N_2K_0|h_k|_\Gamma^2.
\end{equation}
Since $|x^k_0-x_0-z_0|_C=0$, 
Lemma~\ref{l0} is applicable for \eref{e32}, and it yields 
\begin{equation}\label{e58c}
|x^k(t)-x(t)-z^{h_k}(t)|\leq |x^k_t-x_t-z_t|_C\leq(a_k+b_k+c_k+d_k)N_1,\qquad t\in[0,\alpha],
\end{equation}
where $N_1:=e^{L_1N_0\alpha}$,
and hence
\begin{equation}\label{e58e}
\frac{|x^k(t)-x(t)-z^{h_k}(t)|}{|h_k|_\Gamma}
\leq\frac{|x^k_t-x_t-z^{h_k}_t|_C}{|h_k|_\Gamma}
\leq\frac{a_k+b_k+c_k+d_k}{|h_k|_\Gamma}N_1,\quad t\in[0,\alpha],\quad
\end{equation}
which proves both \eref{e33} and \eref{e60}, since Lemmas \ref{l5}, \ref{l6} and \eref{e58b} show that
\begin{equation}\label{e58d}
\lim_{k\to\infty}\frac{a_k+b_k+c_k+d_k}{|h_k|_\Gamma}=0.
\end{equation}
The continuity of $D_2 x(t,\gamma)$  follows from 
Lemma~\ref{l8}.
\end{proof}
\bigskip

\begin{remark}\label{r2}\rm
We comment that if in the statement of Theorem~\ref{t2} the set  $P_2$ is replaced by 
$P_1$, the statements are valid without assumptions (A2) (iv) and (v). To see this we refer to
Remark~\ref{r66}, and in the proof of Theorem~\ref{t2} we use Lemma~4.1 of \cite{Ha4}
to show that $c_k/|h_k|_\Gamma\to0$ as $k\to\infty$. We also note that continuous differentiability of $x$ wrt the
parameters holds in a neighborhood of $\gamma$, since $P_1$ is open in $P$. 
See Theorem 4.7 in \cite{Ha4} for a related result.
\end{remark}
\bigskip

\section{Second-order differentiability wrt the parameters}\label{sec_DDE_second}
\setcounter{equation}0
\setcounter{theorem}0

To obtain second-order differentiability wrt the parameters we need more smoothness of the initial
functions. Therefore we introduce the parameter set
$$
\Gamma_2:=\Www\times\Theta\times\Xi
$$
equipped with the norm $|h|_{\Gamma_2}:=|h^\phi|_\Www+|h^\theta|_\Theta+|h^\xi|_\Xi$.
We will show in Theorem~\ref{t3} below that the parameter map 
$$
\Gamma_2\supset (P_2\cap\Gamma_2) \to\setR^n,\qquad \gamma\to x(t,\gamma)
$$
is twice differentiable at every point $\gamma\in P_2\cap\Gamma_2\cap\calP$. The proof will be based on a sequence of lemmas. 
\medskip

We assume throughout this section
\begin{itemize}
 \item[(H)] $\gamma=(\phi,\theta,\xi)\in P_2\cap\Gamma_2$, $ h=(h^\phi,h^\theta,h^\xi)\in\Gamma$, 
$h_k=(h^\phi_k,h^\theta_k,h^\xi_k)\in\Gamma$ ($k\in\setN$) are so that 
$|h_k|_\Gamma\to0$ as $k\to\infty$, 
$\gamma+h_k\in P_2$ for $k\in\setN$, and $|h_k|_\Gamma\neq0$ for $k\in\setN$.  Let $x^k(t):=x(t,\gamma+h_k)$ 
and $x(t):=x(t,\gamma)$ be the solutions of the \IVP{e1}{e2}, 
 $z^{k,h}(t):=D_2x(t,\gamma+h_k) h$ and  $z^h(t):=D_2x(t,\gamma) h$ be the solutions of the \IVP{e3}{e4}.
\end{itemize}
\medskip

The simplifying notations for $t\in[0,\alpha]$ and $k\in \setN$
\begin{eqnarray*}
u(t)&:=&t-\tau(t,x_t,\xi),\nl
u^k(t)&:=&t-\tau(t,x^k_t,\xi+h^\xi_k),\nl
\vc v(t) &:=&(t,x_t,x(u(t)),\theta),\nl
\vc v^k(t) &:=&(t,x^k_t,x^k(u^k(t)),\theta),\nl
A(t,h^\phi,h^\xi) &:=& D_2\tau(t,x_t,\xi)h^\phi+D_3\tau(t,x_t,\xi)h^\xi,\nl
A^k(t, h^\phi,h^\xi) &:=& D_2\tau(t,x^k_t,\xi+h^\xi_k)h^\phi
+D_3\tau(t,x^k_t,\xi+h^\xi_k)h^\xi,\nl
E(t,h^\phi,h^\xi)&:=&-\dot x(u(t))A(t,h^\phi,h^\xi)+h^\phi(-\tau(t,x_t,\xi)),\quad\mbox{a.e. } t\in[0,\alpha],\nl
E^k(t,h^\phi,h^\xi)&:=&-\dot x^k(u^k(t))A^k(t,h^\phi,h^\xi)+h^\phi(-\tau(t,x^k_t,\xi+h^\xi_k)),
\quad\mbox{a.e. } t\in[0,\alpha],\nl
F(t,h^\phi,h^\xi)&:=&-\ddot x(u(t))A(t,h^\phi,h^\xi)+\dot h^\phi(-\tau(t,x_t,\xi)),\quad\mbox{a.e. } t\in[0,\alpha],\nl
F^k(t,h^\phi,h^\xi)&:=&-\ddot x^k(u^k(t))A^k(t,h^\phi,h^\xi)+\dot h^\phi(-\tau(t,x^k_t,\xi+h^\xi_k)),
\quad\mbox{a.e. } t\in[0,\alpha]
\end{eqnarray*}
will be used throughout this section.
For simplicity of the notation we define $h_0:=0=(0,0,0)\in\Gamma$, and accordingly, 
$x^0:=x$, $u^0:=u$, $z^{0,h}:=z^h$, $A^0:=A$, $E^0:=E$.
Note that in all the above abbreviations the dependence on $\gamma$ is omitted from the notation
but it should be kept in mind.  
With these notations  the operator $L(t,x)$ defined by \eref{e26} can be written shortly as
$$
L(t,x) h=D_2f(\vc v(t))h^\phi+D_3f(\vc v(t))E(t, h^\phi,h^\xi)+D_4f(\vc v(t))h^\theta.
$$
\smallskip

\begin{lemma}\label{l9}
 Assume (A1) (i)--(iv), (A2) (i)--(iv) and  $\gamma=(\phi,\theta,\xi)\in P$ is such that $\phi\in\Www$. 
Then there exists $K_4=K_4(\gamma)\geq0$ such that the solution $x(t)=x(t,\gamma)$ of the \IVP{e1}{e2}
satisfies
\begin{equation}\label{e382}
 |\dot x(t)-\dot x(\bar t)| \leq K_4|t-\bar t|\qquad \mbox{for } t,\bar t\in[-r,0)\quad\mbox{and}\quad t,\bar t\in(0,\alpha].
\end{equation}
Moreover, if in addition $\gamma\in \calP$, then  $x\in \Www([-r,\alpha],\setR^n)$, and
\end{lemma}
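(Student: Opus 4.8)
The plan is to prove the two one-sided Lipschitz estimates for $\dot x$ separately and then, under the compatibility condition, to glue them across $t=0$. On the interval $[-r,0)$ there is nothing to do: by \eref{e2} the solution equals $\phi$ there, and since $\phi\in\Www$ by hypothesis, $\dot x=\dot\phi$ is Lipschitz continuous on $[-r,0)$ with constant $|\ddot\phi|_\Lw$.

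On $(0,\alpha]$ I would use the differential equation \eref{e1} itself. Writing $u(t):=t-\tau(t,x_t,\xi)$, for $t\in(0,\alpha]$ the solution satisfies $\dot x(t)=f(t,x_t,x(u(t)),\theta)$ (the right-hand side is continuous in $t$, so indeed $x\in C^1((0,\alpha])$), hence for $t,\bar t\in(0,\alpha]$
\[
|\dot x(t)-\dot x(\bar t)|=\bigl|f(t,x_t,x(u(t)),\theta)-f(\bar t,x_{\bar t},x(u(\bar t)),\theta)\bigr|.
\]
Estimating the $t$-increment by (A1) (iv) and the remaining arguments by (A1) (ii), this is at most $L_1\bigl(|t-\bar t|+|x_t-x_{\bar t}|_C+|x(u(t))-x(u(\bar t))|\bigr)$. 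By Theorem~\ref{t1} (iii), $x$ is globally Lipschitz on $[-r,\alpha]$ with constant $N$, so $|x_t-x_{\bar t}|_C\leq N|t-\bar t|$; moreover, combining (A2) (ii) and (A2) (iv) exactly as in the proof of Lemma~\ref{l11} (cf.\ \eref{e383}) gives $|u(t)-u(\bar t)|\leq\bigl(1+L_2(1+N)\bigr)|t-\bar t|$, so $|x(u(t))-x(u(\bar t))|\leq N\bigl(1+L_2(1+N)\bigr)|t-\bar t|$. Collecting terms yields \eref{e382} with, e.g., $K_4:=\max\bigl\{|\ddot\phi|_\Lw,\ L_1\bigl(1+N+N(1+L_2(1+N))\bigr)\bigr\}$.

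For the \emph{Moreover} part, assume in addition $\gamma\in\calP$. By the remark following Theorem~\ref{t1}, $x\in C^1([-r,\alpha],\setR^n)$; in particular $\dot x$ is continuous at $0$, its value there being $\dot\phi(0-)=f(0,\phi,\phi(-\tau(0,\phi,\xi)),\theta)$. Letting $\bar t\to0$ from the appropriate side in each part of \eref{e382} gives $|\dot x(t)-\dot x(0)|\leq K_4|t|$ for $t\in[-r,0]$ and $|\dot x(0)-\dot x(\bar t)|\leq K_4\bar t$ for $\bar t\in[0,\alpha]$. For a mixed pair $t\leq0\leq\bar t$ one then has
\[
|\dot x(t)-\dot x(\bar t)|\leq|\dot x(t)-\dot x(0)|+|\dot x(0)-\dot x(\bar t)|\leq K_4\bigl(|t|+\bar t\bigr)=K_4|t-\bar t|,
\]
using $|t|+\bar t=|t-\bar t|$ for $t\leq0\leq\bar t$. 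Hence $\dot x$ is Lipschitz on all of $[-r,\alpha]$ with constant $K_4$, so it is absolutely continuous there with $|\ddot x|_{\Lw([-r,\alpha],\setR^n)}\leq K_4$; that is, $x\in\Www([-r,\alpha],\setR^n)$.

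I do not anticipate a real obstacle: the whole argument is a bookkeeping of Lipschitz constants, the only mildly delicate inputs being the $t$-regularity of the internal maps $t\mapsto x_t$ and $t\mapsto u(t)$, which are however already under control via the global Lipschitz property of $x$ from Theorem~\ref{t1} together with the $t$-Lipschitz assumptions (A1) (iv), (A2) (iv) — this is precisely why those hypotheses appear. It is the compatibility condition that kills the jump of $\dot x$ at $0$ and thereby lets the two one-sided bounds combine into a global one; without it $\dot x$ is in general discontinuous at $0$ and no such global estimate can hold.
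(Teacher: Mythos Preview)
Your proof is correct and follows essentially the same route as the paper: handle $[-r,0)$ via $\phi\in\Www$, handle $(0,\alpha]$ by feeding the Lipschitz bounds from (A1)(ii),(iv), (A2)(ii),(iv) and \eref{e37} into $\dot x(t)=f(t,x_t,x(u(t)),\theta)$, and then glue across $0$ using the compatibility condition. Your explicit triangle-inequality argument at $t=0$ just spells out what the paper leaves as ``\eref{e382} yields that $\dot x$ is Lipschitz on $[-r,\alpha]$''; the constants differ cosmetically but the content is identical.
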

\begin{equation}\label{e382b}
 |\dot x(t)-\dot x(\bar t)| \leq K_4|t-\bar t|\qquad \mbox{for } t,\bar t\in[-r,\alpha].
\end{equation}
\begin{proof}
The Mean Value Theorem and the definition of the $\Www$-norm 
yield
$$
|\dot x(t)-\dot x(\bar t)|=|\dot \phi(t)-\dot \phi(\bar t)|\leq |\phi|_\Www|t-\bar t|,\qquad t,\bar t\in[-r,0).
$$ 
For $t,\bar t\in(0,\alpha]$ it follows from (A1) (ii), (iv), (A2) (ii), (iv), \eref{e37} and \eref{e383} with $k=0$
\begin{eqnarray*}
|\dot x(t)-\dot x(\bar t)|
&=& |f(t,x_t,x(u(t)),\theta)-f(\bar t,x_{\bar t},x(u(\bar t)),\theta)|\nl
&\leq& L_1\Bigl(|t-\bar t|+|x_t-x_{\bar t}|_C+|x(u(t))-x(u(\bar t))|\Bigr)\nl
&\leq& L_1\Bigl(1+N+NL_2(1+N)\Bigr)|t-\bar t|.
\end{eqnarray*}
Hence \eref{e382} is satisfied with $K_4:=\max\{|\phi|_\Www,L_1[1+N+NL_2(1+N)]\}$.

If $\gamma\in\calP$, then $\dot x$ is continuous, and \eref{e382} yields that it is Lipschitz continuous
on $[-r,\alpha]$ with the Lipschitz constant $K_4$, so, in particular, $x\in \Www([-r,\alpha],\setR^n)$.
\end{proof}
\bigskip

\begin{lemma}\label{l10}
 Assume (A1) (i)--(iii), (A2) (i)--(v), and  (H). 
Then 
\begin{equation}\label{e381}
\lim_{k\to\infty} \frac1{|h_k|_\Gamma}\int_0^\alpha |\dot x^k(s)-\dot x(s)-\dot z^{h_k}(s)|\,ds=0,
\end{equation}
and
\begin{equation}\label{e380}
\lim_{k\to\infty} \frac1{|h_k|_\Gamma}\int_0^\alpha |\dot x^k(u^k(s))-\dot x(u^k(s))-\dot z^{h_k}(u^k(s))|\,ds=0.
\end{equation}
\end{lemma}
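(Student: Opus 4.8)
The plan is to prove \eref{e381} first, observing that the integrand $\dot x^k(s)-\dot x(s)-\dot z^{h_k}(s)$ coincides, for a.e.\ $s\in[0,\alpha]$, with the quantity already estimated inside the proof of Theorem~\ref{t2}, and then to deduce \eref{e380} from \eref{e381} by means of the composition Lemma~\ref{l21}. For \eref{e381} I would start from $\dot x^k(s)=f(s,x^k_s,x^k(u^k(s)),\theta+h^\theta_k)$, $\dot x(s)=f(s,x_s,x(u(s)),\theta)$ and $\dot z^{h_k}(s)=L(s,x)(z^{h_k}_s,h^\theta_k,h^\xi_k)$ (a.e.\ $s\in[0,\alpha]$), so that the decomposition \eref{e55}--\eref{e55b}, the bounds $|D_2f|,|D_3f|\le L_1$, $|\dot x|\le N$, $|D_2\tau|\le L_2$ and \eref{e89} give, for a.e.\ $s\in[0,\alpha]$,
\begin{eqnarray*}
|\dot x^k(s)-\dot x(s)-\dot z^{h_k}(s)|
&\le& L_1N_0|x^k_s-x_s-z^{h_k}_s|_C+L_1|x^k(u^k(s))-x(u^k(s))-z^{h_k}(u^k(s))|\\
&&+L_1|x(u^k(s))-x(u(s))-\dot x(u(s))(u^k(s)-u(s))|\\
&&+L_1N|\omega_\tau(s,x_s,\xi,x^k_s,\xi+h^\xi_k)|+L_1N_2K_0|h_k|_\Gamma^2\\
&&+|\omega_f(s,x_s,x(u(s)),\theta,x^k_s,x^k(u^k(s)),\theta+h^\theta_k)|.
\end{eqnarray*}
Integrating over $[0,\alpha]$ and using \eref{e58c} to bound both $|x^k_s-x_s-z^{h_k}_s|_C$ and $|x^k(u^k(s))-x(u^k(s))-z^{h_k}(u^k(s))|$ by $(a_k+b_k+c_k+d_k)N_1$ (the latter being $0$ whenever $u^k(s)\le0$), while the four remaining integrals equal, up to fixed constants, $c_k$, $b_k$, $d_k$ and $a_k$, I obtain that $\int_0^\alpha|\dot x^k(s)-\dot x(s)-\dot z^{h_k}(s)|\,ds$ is bounded by a fixed multiple of $(a_k+b_k+c_k+d_k)$; dividing by $|h_k|_\Gamma$ and invoking \eref{e58d} then gives \eref{e381}.

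To pass to \eref{e380} I would set $g_k(v):=\dot x^k(v)-\dot x(v)-\dot z^{h_k}(v)$ for a.e.\ $v\in[-r,\alpha]$. On $[-r,0]$ one has $x^k(v)-x(v)=h^\phi_k(v)=z^{h_k}(v)$, hence $g_k=0$ a.e.\ there, and with \eref{e381} this yields $\int_{-r}^\alpha|g_k(v)|\,dv=o(|h_k|_\Gamma)$. Next, \eref{e355} (bounding $|\dot x^k(v)-\dot x(v)|$ on $[0,\alpha]$) together with \eref{e94} and \eref{e368} (bounding $|\dot z^{h_k}(v)|=|L(v,x)(z^{h_k}_v,h^\theta_k,h^\xi_k)|$) produce a constant $A\ge0$, independent of $k$, with $|g_k(v)|\le A|h_k|_\Gamma$ for a.e.\ $v\in[-r,\alpha]$; and Lemma~\ref{l11}, relation \eref{e332}, gives $|u^k-u|_{\Ww([0,\alpha],\setR)}\le K_1|h_k|_\Gamma\to0$. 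I would then split $[0,\alpha]=[0,\alpha^*]\cup[\alpha^*,\alpha]$ with $\alpha^*=\min\{r,\alpha\}$. On $[0,\alpha^*]$ the maps $u,u^k$ belong to ${\cal PM}([0,\alpha^*],[-r,\alpha^*])$ since $\gamma,\gamma+h_k\in P_2$, so Lemma~\ref{l21} applied with a one-point parameter set and $f^{k}:=g_k/|h_k|_\Gamma$ (essentially bounded by $A$, with $L^1$-norm over $[-r,\alpha^*]$ tending to $0$) gives $\frac1{|h_k|_\Gamma}\int_0^{\alpha^*}|g_k(u^k(s))|\,ds\to0$. On $[\alpha^*,\alpha]$, which is nonempty only when $\alpha>r$ and where $u^k(s)\in[s-r,s]\subset[0,\alpha]$ so that $x$, $x^k$ and $z^{h_k}$ are continuously differentiable, I would decompose $u^k$ into finitely many strictly monotone pieces and repeat the argument (again through Lemma~\ref{l21}) to get $\frac1{|h_k|_\Gamma}\int_{\alpha^*}^\alpha|g_k(u^k(s))|\,ds\to0$. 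Adding the two contributions yields \eref{e380}.

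The hard part will be \eref{e380}: \eref{e381} only controls $g_k$ in the $L^1$-norm, whereas pointwise $g_k$ is merely $O(|h_k|_\Gamma)$, so this smallness has to be transported through the composition with $u^k$ --- a map that need neither be monotone nor send null sets to null sets, so that even the a.e.\ meaning of $g_k\circ u^k$ must be argued. This is precisely the situation that Lemma~\ref{l21} (and Lemma~\ref{l3}, which guarantees $g_k\circ u^k\in\Lw$) was designed for: the piecewise strict monotonicity built into the set $P_2$ and the $\Ww$-convergence $u^k\to u$ coming from \eref{e332} are exactly its hypotheses. The rest --- the uniform bound $A$, and checking on each of $[0,\alpha^*]$ and $[\alpha^*,\alpha]$ that the monotone-piece decomposition and the range inclusions are the appropriate ones --- is routine bookkeeping.
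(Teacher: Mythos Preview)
Your argument is correct and coincides with the paper's: for \eref{e381} both integrate the pointwise estimate coming from \eref{e55}--\eref{e55b}, bound the two segment terms by \eref{e58c}, and conclude via \eref{e58d}; for \eref{e380} both invoke Lemma~\ref{l21} together with the uniform bound $|g_k|\le (L+N_2)|h_k|_\Gamma$ and the $\Ww$-convergence $u^k\to u$ from Lemma~\ref{l11}. The only difference is that the paper applies Lemma~\ref{l21} directly on all of $[0,\alpha]$ without splitting at $\alpha^*$, implicitly treating $u,u^k$ as elements of ${\cal PM}([0,\alpha],[-r,\alpha])$; your extra care about $[\alpha^*,\alpha]$ is therefore not in the paper, and your proposed handling there (``decompose $u^k$ into finitely many strictly monotone pieces'') is not justified by the stated hypotheses either --- it simply mirrors the same tacit assumption.
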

\begin{proof}
 Using \eref{e101}, \eref{e201}, \eref{e32} and \eref{e58c} we get
\begin{eqnarray*}
\balra{0.5cm}{\int_0^\alpha|\dot x^k(s)-\dot x(s)-\dot z^{h_k}(s)|\,ds}\nl
&\leq&\int_0^\alpha \Bigl[L_1\Bigl(|x^k_s-x_s-z^{h_k}_s|_C+|x^k(u^k(s))-x(u^k(s))-z^{h_k}(u^k(s))|\nl
&&\qquad +\ |x(u^k(s))-x(u(s))-\dot x(u(s))(u^k(s)-u(s))|\nl
 &&\qquad +\ N|\omega_\tau(s,x_s,\xi,x^k_s,\xi+h^\xi_k)|
  + NL_2|x^k_s-x_s-z^{h_k}_s|_C + N_2K_0|h_k|_\Gamma^2\Bigr)\nl
&&\qquad+\ |\omega_f(s,x_s,x(u(s)),\theta,x^k_s,x^k(u^k(s)),\theta+h^\theta_k)|\Bigr]ds\nl
&\leq& a_k+b_k+c_k+d_k +L_1N_0\int_0^\alpha |x^k_s-x_s-z^{h_k}_s|_C\,ds\nl
&\leq& (a_k+b_k+c_k+d_k)(1 +L_1N_0N_1\alpha),
\end{eqnarray*}
where $a_k$, $b_k$, $c_k$ and $d_k$ are defined by \eref{e57}--\eref{e58b}, respectively.
Then \eref{e381} is obtained from \eref{e58d}.

Relation \eref{e380} follows from \eref{e381}, $x^k(s)-x(s)-z^{h_k}(s)=0$ for $s\in[-r,0]$, 
$|\dot x^k(s)-\dot x(s)-\dot z^{h_k}(s)|\leq (L+N_2)|h_k|_\Gamma$ for $s\in[-r,0]$,  and Lemmas~\ref{l21} and \ref{l11}.
\end{proof}
\medskip

\begin{lemma}\label{l25}
Assume (A1) (i)--(v), (A2) (i)--(vi),  (H) and $\gamma\in\calP$.
Then there exists $N_4=N_4(\gamma)\geq0$ such that
\begin{equation}\label{e386}
|\dot z^{h}(s)-\dot z^{h}(\bar s)|\leq N_4|h|_{\Gamma_2}|s-\bar s|,\quad\mbox{for}\quad  s,\bar s\in[-r,0)\quad\mbox{and}\quad s,\bar s\in(0,\alpha],
\quad h\in\Gamma_2.
\end{equation}
\end{lemma}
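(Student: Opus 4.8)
The plan is to differentiate the integrated form of the variational equation \eref{e3}--\eref{e4} and estimate the increment $\dot z^h(s)-\dot z^h(\bar s)$ directly, using the Lipschitz regularity already available for $x$ from Lemma~\ref{l9} (note $\gamma\in\calP$ gives $x\in\Www$, so $\ddot x\in\Lw$) together with the Lipschitz properties of $D_2f$, $D_3f$, $D_4f$, $D_2\tau$, $D_3\tau$ from (A1) (v) and (A2) (vi). For $s,\bar s\in(0,\alpha]$ we have $\dot z^h(s)=L(s,x)h=D_2f(\vc v(s))h^\phi+D_3f(\vc v(s))E(s,h^\phi,h^\xi)+D_4f(\vc v(s))h^\theta$, so the difference $\dot z^h(s)-\dot z^h(\bar s)$ splits into (a) increments of the coefficients $D_if(\vc v(\cdot))$, and (b) the increment of $E(\cdot,h^\phi,h^\xi)=-\dot x(u(\cdot))A(\cdot,h^\phi,h^\xi)+h^\phi(-\tau(\cdot,x_\cdot,\xi))$. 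For the interval $[-r,0)$ the statement is trivial since there $\dot z^h(s)=\dot h^\phi(s)$ and $|\dot h^\phi(s)-\dot h^\phi(\bar s)|\le|h^\phi|_\Www|s-\bar s|\le|h|_{\Gamma_2}|s-\bar s|$.

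First I would record that $s\mapsto\vc v(s)=(s,x_s,x(u(s)),\theta)$ is Lipschitz from $[0,\alpha]$ into $\setR\times C\times\setR^n\times\Theta$: the first component trivially, $\|x_s-x_{\bar s}\|_C\le N|s-\bar s|$ by \eref{e37}, the fourth is constant, and $|x(u(s))-x(u(\bar s))|\le N L_2(1+N)|s-\bar s|$ by \eref{e37} and \eref{e383} (with $k=0$). Then (A1) (v) gives $|D_if(\vc v(s))-D_if(\vc v(\bar s))|_{\calL}\le L_3 C_1|s-\bar s|$ for $i=2,3,4$, with $C_1$ depending only on $N$, $L_2$; and boundedness of $D_if$ on the compact ranges gives a uniform bound $|D_if(\vc v(s))|\le C_2$. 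Similarly, from (A2) (vi) the maps $s\mapsto D_i\tau(s,x_s,\xi)$ are Lipschitz on $[0,\alpha]$, hence so is $s\mapsto A(s,h^\phi,h^\xi)$, with Lipschitz constant $\le L_5 C_3(|h^\phi|_C+|h^\xi|_\Xi)$, and $|A(s,h^\phi,h^\xi)|\le L_2(|h^\phi|_C+|h^\xi|_\Xi)$. Next, $s\mapsto\dot x(u(s))$ is Lipschitz on $(0,\alpha]$: since $\gamma\in\calP$, $\dot x$ is Lipschitz with constant $K_4$ by \eref{e382b}, and $u$ is Lipschitz with constant $L_2(1+N)$ by \eref{e383}, so $|\dot x(u(s))-\dot x(u(\bar s))|\le K_4 L_2(1+N)|s-\bar s|$; also $|\dot x(u(s))|\le N$. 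Finally $s\mapsto h^\phi(-\tau(s,x_s,\xi))$ is Lipschitz with constant $\le|\dot h^\phi|_\Lw\cdot L_2(1+N)|s-\bar s|\le|h|_{\Gamma_2}L_2(1+N)|s-\bar s|$ by Lemma~\ref{ll12b} and \eref{e383} (this is where the $\Www$-norm on $h$, i.e.\ $h\in\Gamma_2$, is used — we need $\dot h^\phi$ bounded). Assembling, $|E(s,h^\phi,h^\xi)-E(\bar s,h^\phi,h^\xi)|\le C_4|h|_{\Gamma_2}|s-\bar s|$ and $|E(s,h^\phi,h^\xi)|\le (N L_2+1)(|h^\phi|_C+|h^\xi|_\Xi)\le C_5|h|_{\Gamma_2}$. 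Plugging these into the three-term expansion of $\dot z^h(s)-\dot z^h(\bar s)$ and using the product-rule estimate $|a(s)b(s)-a(\bar s)b(\bar s)|\le|a|_\infty|b(s)-b(\bar s)|+|b|_\infty|a(s)-a(\bar s)|$ gives \eref{e386} on $(0,\alpha]$ with an explicit $N_4$ built from $L_1,L_3,L_5,N,N_2,K_4,L_2$.

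The main obstacle is the term $D_3f(\vc v(s))E(s,h^\phi,h^\xi)$, specifically the $\dot x(u(s))$ factor hidden inside $E$: one must be careful that $\dot x\circ u$ is genuinely Lipschitz on $(0,\alpha]$ and not merely $\Lw$. This is exactly why the hypothesis $\gamma\in\calP$ (hence $x\in\Www([-r,\alpha],\setR^n)$ by Lemma~\ref{l9}) is needed, rather than the weaker $x\in\Www$ only piecewise. Without $\calP$, $\dot x$ has a jump at $0$ and the composite $\dot x\circ u$ need not be Lipschitz across mesh points of $u$; the compatibility condition removes the jump and makes $K_4$ a global Lipschitz constant. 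The only other subtlety is bookkeeping the dependence of $N_4$ on $\gamma$ (through $N$, $K_4$, $L_1,\dots,L_5$, which are all fixed once $P$ is fixed), but these are all the constants declared fixed in the paragraph following Theorem~\ref{t1}, so $N_4=N_4(\gamma)$ is well defined; everything else is the routine product-and-composition Lipschitz estimate sketched above.
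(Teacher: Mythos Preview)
Your overall strategy---split into the two regimes $[-r,0)$ and $(0,\alpha]$, and on the latter estimate the Lipschitz variation of each ingredient of $L$---is exactly what the paper does. But you have the wrong formula for $\dot z^h$ on $(0,\alpha]$. The variational equation \eref{e3} reads $\dot z(t)=L(t,x)(z_t,h^\theta,h^\xi)$, so
\[
\dot z^h(s)=L(s,x)(z^h_s,h^\theta,h^\xi)
=D_2f(\vc v(s))z^h_s+D_3f(\vc v(s))E(s,z^h_s,h^\xi)+D_4f(\vc v(s))h^\theta,
\]
with $E(s,z^h_s,h^\xi)=-\dot x(u(s))A(s,z^h_s,h^\xi)+z^h(u(s))$. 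You have written $L(s,x)h$, i.e.\ put $h^\phi$ in place of $z^h_s$, and correspondingly $h^\phi(-\tau(s,x_s,\xi))$ in place of $z^h(u(s))$. This is not the variational equation; $z^h$ coincides with $h^\phi$ only on $[-r,0]$.

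Once you correct this, the extra pieces you need are: (i) the Lipschitz estimate $|z^h_s-z^h_{\bar s}|_C\le N_2|h|_\Gamma|s-\bar s|$, which follows from \eref{e94}; (ii) the term $|z^h(u(s))-z^h(u(\bar s))|\le N_2|h|_\Gamma\,L_2(1+N)|s-\bar s|$, again by \eref{e94} and \eref{e383}; and (iii) the increment $|A(s,z^h_s,h^\xi)-A(\bar s,z^h_{\bar s},h^\xi)|$, for which one also needs the $D_2\tau(\bar s,x_{\bar s},\xi)(z^h_s-z^h_{\bar s})$ contribution. With these in place the argument goes through with only the $\Gamma$-norm of $h$ on $(0,\alpha]$. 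In particular your claim that the $\Www$-norm of $h^\phi$ enters through the term $h^\phi(-\tau(\cdot,x_\cdot,\xi))$ is misplaced: that term does not appear, and in any case ``$\dot h^\phi$ bounded'' is a $\Ww$-, not a $\Www$-, requirement. The $\Gamma_2$-norm is genuinely needed only on $[-r,0)$, where $\dot z^h=\dot h^\phi$ and Lipschitz continuity of $\dot h^\phi$ requires $h^\phi\in\Www$.
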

\begin{proof} 
For $h\in\Gamma_2$, i.e., $h^\phi\in\Www$, the function 
 $\dot h^\phi$ is continuous, and for $s,\bar s\in[-r,0)$
$$
|\dot z^h(s)-\dot z^h(\bar s)|=|\dot h^\phi(s)-\dot h^\phi(\bar s)|\leq |h^\phi|_\Www|s-\bar s|\leq |h|_{\Gamma_2}|s-\bar s|.
$$

Since $\gamma\in\calP$, $L(s,x)$ is defined and continuous for all $s\in[0,\alpha]$,
so $\dot z^h$ is continuous on $(0,\alpha]$. 
For $s,\bar s\in(0,\alpha]$ \eref{e368} and \eref{e3}  imply
{\arraycolsep=0.6\arraycolsep
\begin{eqnarray}
|\dot z^h(s)-\dot z^h(\bar s)|
&=&|L(s,x)(z^h_s,h^\theta,h^\xi)-L(\bar s,x)(z^h_{\bar s},h^\theta,h^\xi)|\nl
&\leq& |[L(s,x)-L(\bar s,x)](z^h_s,h^\theta,h^\xi)|+|L(\bar s,x)(z^h_s-z^h_{\bar s},0,0)|\nl
&\leq& |[D_2 f(\vc v(s))-D_2 f(\vc v(\bar s))]z^h_s|
+|[D_3 f(\vc v(s))-D_3 f(\vc v(\bar s))]E(s,z^h_s,h^\xi)|\nl
&&+|D_3 f(\vc v(\bar s))[E(s,z^h_s,h^\xi)-E(\bar s,z^h_{\bar s},h^\xi)]|\nl
&& +|[D_4 f(\vc v(s))-D_4 f(\vc v(\bar s))]h^\theta|+L_1N_0|z^h_s-z^h_{\bar s}|_C.
\label{e375}
\end{eqnarray}}
We have by \eref{e37} and \eref{e383} with $k=0$ for $s,\bar s\in[0,\alpha]$
\begin{equation}\label{e385}
|\vc v(s)-\vc v(\bar s)|\leq |s-\bar s|+|x_s-x_{\bar s}|_C+|x(u(s))-x(u(\bar s))|
\leq K_5|s-\bar s|
\end{equation}
and 
\begin{equation}\label{e385b}
|(s,x_s,\xi)-(\bar s,x_{\bar s},\xi)|\leq (1+N)|s-\bar s|
\end{equation}
with $K_5:=(1+N+NL_2(1+N))$ and $(1+N):=1+N$.
Let $L_3:=L_3(\alpha,M_1,M_2,M_3)$ and $L_5:=L_5(\alpha,M_1,M_2,M_3)$ be defined by
(A1) (v) and (A2) (vi), respectively. 

The definition of $A$, (A2) (ii) and \eref{e45} give
\begin{equation}\label{e384}
|A(s,z^h_s,h^\xi)|\leq |D_2\tau(s,x_s,\xi)z^h_s|+|D_3\tau(s,x_s,\xi)h^\xi|
\leq K_6|h|_\Gamma,\quad s\in [0,\alpha],\ h\in\Gamma,\ \gamma\in P_2
\end{equation}
with $K_6:=L_2(N_1+1)$,  and by using (A2) (ii), (vi), \eref{e45}, \eref{e94}, \eref{e385b} 
{\arraycolsep=0.6\arraycolsep
\begin{eqnarray}
|A(s,z^{h}_s,h^\xi)-A(\bar s,z^{h}_{\bar s},h^\xi)|
&\leq& |[D_2\tau(s,x_s,\xi)-D_2\tau(\bar s,x_{\bar s},\xi)]z^h_s|+|D_2\tau(\bar s,x_{\bar s},\xi)[z^h_s-z^{h}_{\bar s}]|\nl
&&+|[D_3\tau(s,x_s,\xi)-D_3\tau(\bar s,x_{\bar s},\xi)]h^\xi|\nl
&\leq& K_7|s-\bar s||h|_\Gamma,\qquad s,\bar s\in[0,\alpha] \label{e388}
\end{eqnarray}}%
with $K_7:=L_5(1+N)N_1+L_2N_2+L_5(1+N)$.
Relations \eref{e37},  \eref{e45} and \eref{e384} yield
\begin{eqnarray}
|E(s,z^{h}_s,h^\xi)|
&\leq& |\dot x(u(s))||A(s,z^h_s,h^\xi)|+|z^{h}(u(s))|\nl
&\leq& K_8|h|_\Gamma,\qquad  s\in[0,\alpha],\ h\in\Gamma,\ \gamma\in P_2\label{e389a}
\end{eqnarray} 
with $K_8:=NK_6+N_1$, and using \eref{e37}, \eref{e383} with $k=0$, \eref{e94}, \eref{e382b}, \eref{e384} 
and  \eref{e388}  
\begin{eqnarray}
\balra{1cm}{|E(s,z^h_s,h^\xi)-E(\bar s,z^h_{\bar s},h^\xi)|}\nl
&\leq& |[\dot x(u(s))-\dot x(u(\bar s))]A(s,z^h_s,h^\xi)|+|\dot x(u(\bar s))[A(s,z^h_s,h^\xi)-A(\bar s,z^h_{\bar s},h^\xi)]|\nl
&&+|z^h(u(s))-z^h(u(\bar s))|\nl
&\leq& K_9|s-\bar s||h|_\Gamma,\qquad s,\bar s\in[0,\alpha]\label{e389}
\end{eqnarray}
with $K_9=K_9(\gamma):=K_4L_2(1+N)K_6+NK_7+N_2L_2(1+N)$.
Then combining \eref{e375} with \eref{e385}, \eref{e389a} and \eref{e389} yields
$$
|\dot z^h(s)-\dot z^h(\bar s)|
\leq (L_3K_5N_1+L_3K_5K_8+L_1K_9+L_3K_5+L_1N_0N_2)|s-\bar s||h|_\Gamma
$$
for $s,\bar s\in[0,\alpha]$ and $h \in\Gamma$.
Hence  $N_4:=\max\{1,L_3K_5N_1+L_3K_5K_8+L_1K_9+L_3K_5+L_1N_0N_2\}$ satisfies \eref{e386}.
\end{proof}
\medskip

\begin{lemma}\label{l25b}
Assume (A1) (i)--(v), (A2) (i)--(vi), (H) and $\gamma\in\calP$.
 Then 
\begin{equation}\label{e379}
\lim_{k\to\infty}\sup_{h\neq0\atop h\in\Gamma_2}\frac1{|h|_{\Gamma_2}}
\int_0^\alpha |\dot z^{h}(u^k(s))-\dot z^{h}(u(s))|\,ds=0.
\end{equation}
\end{lemma}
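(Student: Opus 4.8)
The plan is to isolate the single jump discontinuity of $\dot z^h$, which sits at $t=0$ because no compatibility condition is imposed on the variational equation \eref{e3}--\eref{e4}, and to absorb it into a fixed profile so that the remainder is genuinely Lipschitz. Since $\gamma\in\calP$, Lemma~\ref{l9} gives $x\in\Www$, so $L(t,x)$ is defined for \emph{every} $t\in[0,\alpha]$, the map $t\mapsto L(t,x)$ is continuous, and hence $\dot z^h$ is continuous on $[0,\alpha]$, where by Lemma~\ref{l25} (extended to the closed interval by continuity) it is Lipschitz with constant $N_4|h|_{\Gamma_2}$; on $[-r,0)$ it coincides with $\dot h^\phi$, which for $h\in\Gamma_2$ is Lipschitz with constant $|h^\phi|_{\Www}\le|h|_{\Gamma_2}$, so the only discontinuity of $\dot z^h$ on $[-r,\alpha]$ is the jump at $0$. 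Setting $J_h:=L(0,x)(h^\phi,h^\theta,h^\xi)-\dot h^\phi(0)$, that jump, estimate \eref{e368} together with $|h|_\Gamma\le|h|_{\Gamma_2}$ yields $|J_h|\le(L_1N_0+1)|h|_{\Gamma_2}$. I would then consider, on $[-r,\alpha]$, the function $p^h:=\dot z^h-J_h\chi_{[0,\alpha]}$; matching the one-sided limits at $0$ (both equal $\dot h^\phi(0)$) shows $p^h$ is continuous across $0$, so, since $N_4\ge1$, $p^h$ is Lipschitz on all of $[-r,\alpha]$ with constant $N_4|h|_{\Gamma_2}$.

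First I would invoke Lemma~\ref{l11} (available since (A2) (iv)--(v) are assumed) to obtain $|u^k-u|_{\Ww([0,\alpha],\setR)}\le K_1|h_k|_\Gamma\to0$. Then, for $h\in\Gamma_2\setminus\{0\}$, applying the triangle inequality to $\dot z^h=p^h+J_h\chi_{[0,\alpha]}$, using the Lipschitz bound on $p^h$ together with $|u^k(s)-u(s)|\le|u^k-u|_{C([0,\alpha],\setR)}$, and using $|J_h|\le(L_1N_0+1)|h|_{\Gamma_2}$, I get
\[
\frac1{|h|_{\Gamma_2}}\int_0^\alpha|\dot z^h(u^k(s))-\dot z^h(u(s))|\,ds
\;\le\;\alpha N_4K_1|h_k|_\Gamma+(L_1N_0+1)\,\rho_k,
\]
where $\rho_k:=\int_0^\alpha|\chi_{[0,\alpha]}(u^k(s))-\chi_{[0,\alpha]}(u(s))|\,ds$ does not depend on $h$. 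Taking the supremum over $h$ and letting $k\to\infty$, it therefore suffices to show $\rho_k\to0$.

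To prove $\rho_k\to0$ I would split $[0,\alpha]=[0,\alpha^*]\cup[\alpha^*,\alpha]$ with $\alpha^*=\min\{r,\alpha\}$. On $[0,\alpha^*]$ both $u(s)$ and $u^k(s)$ lie in $[-r,\alpha^*]$, where $\chi_{[0,\alpha]}$ coincides with $\chi_{[0,\alpha^*]}\in\Lw([-r,\alpha^*],\setR)$; since $\gamma,\gamma+h_k\in P_2$, the restrictions of $u$ and $u^k$ to $[0,\alpha^*]$ belong to ${\cal PM}([0,\alpha^*],[-r,\alpha^*])$ and converge in the $\Ww$-norm, so Lemma~\ref{l4} gives $\int_0^{\alpha^*}|\chi_{[0,\alpha]}(u^k(s))-\chi_{[0,\alpha]}(u(s))|\,ds\to0$. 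On $[\alpha^*,\alpha]$, which is nonempty only when $\alpha>r$ (so that $\alpha^*=r$), one has $u(s)=s-\tau(s,x_s,\xi)\in[s-r,s]\subset[0,\alpha]$ and likewise $u^k(s)\in[0,\alpha]$, hence the integrand vanishes identically; therefore $\rho_k\to0$ and \eref{e379} follows.

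I expect the technical heart of the argument to be the reduction of the jump term to Lemma~\ref{l4}: because $\dot z^h$ is genuinely discontinuous at $0$, no pointwise Lipschitz estimate can handle the set of $s$ for which $u^k(s)$ and $u(s)$ straddle $0$, and one is forced to bound the Lebesgue measure of that set — precisely the piecewise-strictly-monotone measure estimate supplied by Lemma~\ref{l4}, which is also why the $\Ww$-convergence of $u^k$ (rather than mere $C$-convergence) is needed here. The uniformity in $h$ then comes for free, since every constant arising in the decomposition is linear in $|h|_{\Gamma_2}$ while $\rho_k$ does not involve $h$ at all.
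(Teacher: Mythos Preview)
Your argument is correct, and it takes a genuinely different route from the paper's own proof. The paper argues directly at the level of the zeros of $u$: since $\gamma\in P_2$ the function $u$ has only finitely many zeros $s_1<\cdots<s_\ell$ in $[0,\alpha]$; one excises $\epsilon$-neighbourhoods $[s_i-\epsilon,s_i+\epsilon]$ around these points, uses on each such neighbourhood the crude bound $|\dot z^h|\le N_2|h|_{\Gamma_2}$, and on the complement observes (from \eref{e88}) that for large $k$ both $u(s)$ and $u^k(s)$ stay on the same side of $0$ and are bounded away from $0$, so the piecewise Lipschitz estimate \eref{e386} applies; letting $\epsilon\to0$ finishes the job. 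In particular the paper's proof needs only the $C$-convergence $|u^k-u|_C\to0$ from \eref{e88}.

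Your decomposition $\dot z^h=p^h+J_h\chi_{[0,\alpha]}$ is cleaner: it identifies the single obstruction (the jump at $0$) explicitly, reduces the Lipschitz part to a one-line estimate, and delegates the jump term entirely to Lemma~\ref{l4}. The price is that Lemma~\ref{l4} needs the stronger $\Ww$-convergence of $u^k$ to $u$ (hence your appeal to Lemma~\ref{l11} and assumption (A2)(v)), whereas the paper's direct argument gets by with $C$-convergence. Both methods are valid under the stated hypotheses; yours is more structural and makes the uniformity in $h$ completely transparent, while the paper's is slightly more economical in what it actually uses.
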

\begin{proof}
Since $\gamma\in  P_2$ and $u(0)\leq0$, it follows that  $u$ has finitely many zeros on $[0,\alpha]$. Let
 $0\leq s_1<s_2<\cdots<s_\ell\leq \alpha$ be the mesh points where $u(s_i)=0$,  
$0<\epsilon<\min\{s_{i+1}-s_i\st i=1,\ldots,\ell-1\}/2$ be fixed, and introduce  $s_i':=\min\{s_i+\epsilon,\alpha\}$ 
and $s_i'':=\max\{s_i-\epsilon,0\}$ for $i=1,\ldots,\ell$, $s_0':=0$, $s_{\ell+1}'':=\alpha$, and 
let
$$
M:=\min_{i=1,\ldots,\ell-1}\min_{s\in[s_i',s_{i+1}'']} |u(s)|.
$$
We have $M>0$. Relation \eref{e88} yields that there exist $k_0>0$ such that 
$|u^k-u|_{C([0,\alpha],\,\setR)}<\frac M2$ for $k\geq k_0$. 
Then for $k\geq k_0$ it follows $|u^k(s)|\geq \frac M2$ for $s\in[s_i',s_{i+1}'']$ and $i=0,\ldots,\ell$.
Note that $h\in \Gamma_2$ and $\gamma\in\calP$ yield $\dot z^h$ is continuous on $[-r,0)$ and $(0,\alpha]$, 
and \eref{e94} implies $|\dot z^h(s)|\leq N_2|h|_\Gamma\leq N_2|h|_{\Gamma_2}$ for $s\neq0$.
Therefore $|\dot z^h(u^k(s))|\leq N_2|h|_{\Gamma_2}$ for a.e. $s\in[0,\alpha]$, since,  by  assumption (H),
$\gamma+h_k\in P_2$, hence
 $u^k\in{\cal PM}([0,\alpha],[-r,\alpha])$.  
Then  \eref{e88}, \eref{e94} and \eref{e386} yield
\begin{eqnarray*}
\balra{1cm}{\int_0^\alpha |\dot z^{h}(u^k(s))-\dot z^{h}(u(s))|\,ds}\\
&\leq& \sum_{i=1}^\ell \int_{s_i''}^{s_i'} [|\dot z^{h}(u^k(s))|+|\dot z^{h}(u(s))|]\,ds
+\sum_{i=0}^{\ell} \int_{s_i'}^{s_{i+1}''} |\dot z^{h}(u^k(s))-\dot z^{h}(u(s))|\,ds\\
&\leq& 4\ell \epsilon N_2|h|_{\Gamma_2}+(\ell+1) \alpha N_4 K_0|h|_{\Gamma_2}|h_k|_\Gamma.
\end{eqnarray*}
This concludes the proof of \eref{e379}, since $\epsilon>0$ can be arbitrary close to 0.
\end{proof}
\bigskip

\begin{lemma}\label{l23}
Assume (A1) (i)--(v), (A2) (i)--(vi), (H) and $\gamma\in\calP$. Then 
\begin{equation}\label{e372}
\lim_{k\to\infty}\sup_{h\neq0\atop
h\in\Gamma_2}\frac{1}{|h|_{\Gamma_2}|h_k|_\Gamma}\int_0^\alpha
|z^{h}(u^k(s))-z^{h}(u(s))-\dot z^{h}(u(s))(u^k(s)-u(s))|\,ds=0.
\end{equation}
\end{lemma}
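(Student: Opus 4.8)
The plan is to follow the scheme of the proof of Lemma~\ref{l5} (and to mirror that of Lemma~\ref{l25b}), but keeping track of the dependence on $h$ so that every estimate is linear in $|h|_{\Gamma_2}$, and replacing the appeal to the convergence \eref{e68} of Lemma~\ref{l1} by the uniform Lipschitz bound \eref{e386} on $\dot z^h$ supplied by Lemma~\ref{l25}. The point requiring care is that, even when $\gamma\in\calP$, the function $z^h$ is only piecewise $\Www$: it is $C^1$ on $[-r,0]$ and on $[0,\alpha]$ separately, but $\dot z^h(0-)=\dot h^\phi(0-)$ and $\dot z^h(0+)=L(0,x)(z^h_0,h^\theta,h^\xi)$ need not coincide, so \eref{e386} is available only when both of its arguments lie on the same side of $0$. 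Hence I would localise around the zeros of $u$, exactly as in the proof of Lemma~\ref{l25b}.

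Concretely: since $\gamma\in P_2$ and $u(0)=-\tau(0,\phi,\xi)\leq0$, the map $u$ has only finitely many zeros $0\leq s_1<\cdots<s_\ell\leq\alpha$ on $[0,\alpha]$ (see the proof of Lemma~\ref{l25b}). Fix $0<\epsilon<\min\{s_{i+1}-s_i\st i=1,\ldots,\ell-1\}/2$, put $s_i':=\min\{s_i+\epsilon,\alpha\}$, $s_i'':=\max\{s_i-\epsilon,0\}$, $s_0':=0$, $s_{\ell+1}'':=\alpha$, and set $M:=\min_{i=0,\ldots,\ell}\min_{s\in[s_i',s_{i+1}'']}|u(s)|>0$. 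By \eref{e88} there is $k_0$ with $|u^k-u|_{C([0,\alpha],\setR)}<M/2$ for $k\geq k_0$; then for $k\geq k_0$ and $s\in[s_i',s_{i+1}'']$ we have $|u^k(s)|\geq M/2$, and the whole segment $\{u(s)+\nu(u^k(s)-u(s))\st\nu\in[0,1]\}$ stays on one side of $0$. For such $s$, writing $z^h(u^k(s))-z^h(u(s))-\dot z^h(u(s))(u^k(s)-u(s))=\int_0^1[\dot z^h(u(s)+\nu(u^k(s)-u(s)))-\dot z^h(u(s))](u^k(s)-u(s))\,d\nu$ (valid since $z^h$ is absolutely continuous), the bound \eref{e386} together with \eref{e88} shows the integrand is at most $N_4|h|_{\Gamma_2}|u^k(s)-u(s)|^2\leq N_4K_0^2|h|_{\Gamma_2}|h_k|_\Gamma^2$, so the integral over $\bigcup_i[s_i',s_{i+1}'']$ is $\leq(\ell+1)\alpha N_4K_0^2|h|_{\Gamma_2}|h_k|_\Gamma^2$. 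On the complementary set $\bigcup_{i=1}^\ell[s_i'',s_i']$, of measure $\leq2\ell\epsilon$, I would bound the integrand crudely by $|z^h(u^k(s))-z^h(u(s))|+|\dot z^h(u(s))||u^k(s)-u(s)|$ and use Lemma~\ref{ll12b}, \eref{e94} (hence $|\dot z^h|_\Lw\leq N_2|h|_\Gamma\leq N_2|h|_{\Gamma_2}$) and \eref{e88} to make it $\leq2N_2K_0|h|_{\Gamma_2}|h_k|_\Gamma$, so the integral over this set is $\leq4\ell\epsilon N_2K_0|h|_{\Gamma_2}|h_k|_\Gamma$. Dividing by $|h|_{\Gamma_2}|h_k|_\Gamma$ and taking the supremum over $h\in\Gamma_2\setminus\{0\}$ yields, for $k\geq k_0$, a bound of the shape $(\ell+1)\alpha N_4K_0^2|h_k|_\Gamma+4\ell\epsilon N_2K_0$; letting $k\to\infty$ and then $\epsilon\to0$ gives \eref{e372}.

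The main obstacle is precisely the one-sidedness of the second-order estimate \eref{e386} across $t=0$, which is what forces the argument to be local around the zeros of $u$ rather than a single global linearization estimate (the compatibility condition $\gamma\in\calP$ gives $x\in\Www$ but does not propagate to $z^h$ at $0$). The remaining ingredients are routine: the measurability of $s\mapsto\dot z^h(u(s))$ follows as in Lemma~\ref{l2}, since $u\in{\cal PM}$ and $\dot z^h$ fails to be defined only at the single point $0$; and all constants appearing ($K_0$ from \eref{e88}, $N_2$ from \eref{e94}, $N_4$ from \eref{e386}) are independent of $h$, so the whole bound is homogeneous of degree one in $|h|_{\Gamma_2}$ and the supremum over $h$ is harmless.
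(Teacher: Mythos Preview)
Your proposal is correct and is essentially identical to the paper's own proof: both localise around the finitely many zeros of $u$ (borrowing the setup from Lemma~\ref{l25b}), crudely bound the integrand by $2N_2K_0|h|_{\Gamma_2}|h_k|_\Gamma$ on the $2\epsilon$-neighbourhoods of these zeros using \eref{e94} and \eref{e88}, and on the remaining good intervals write the remainder as $\int_0^1[\dot z^h(u(s)+\nu(u^k(s)-u(s)))-\dot z^h(u(s))](u^k(s)-u(s))\,d\nu$ and apply \eref{e386} together with \eref{e88}, arriving at the same final bound $4\ell\epsilon N_2K_0+(\ell+1)\alpha N_4K_0^2|h_k|_\Gamma$ after dividing by $|h|_{\Gamma_2}|h_k|_\Gamma$. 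Your identification of the obstacle---that $\dot z^h$ can jump at $0$ even when $\gamma\in\calP$, so \eref{e386} only holds on each side of $0$---is exactly the point the paper's argument hinges on as well.
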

\begin{proof}
Let $s_i,s_i',s_i''$,  $\ell$, $\epsilon$, $M$  and $k_0$ be defined as in the proof of Lemma~\ref{l25b}.
Then $|u(s)+\nu(u^k(s)-u(s))|>\frac M2$, and $u(s)$ and $u(s)+\nu(u^k(s)-u(s))$  are both either positive or negative 
for $s\in[s_i',s_{i+1}'']$, $\nu\in[0,1]$ and $i=0,\ldots,\ell$. 
Therefore \eref{e88} and \eref{e386} yield 
$$
|\dot z^h(u(s)+\nu(u^k(s)-u(s)))-\dot z^h(u(s))|\leq N_4|h|_{\Gamma_2}|u^k(s)-u(s)|\leq N_4K_0|h|_{\Gamma_2}|h_k|_\Gamma.
$$
Hence, using Fubini's Theorem, \eref{e88}  and \eref{e94} 
we have
\begin{eqnarray*}
\balra{1cm}{ \int_0^\alpha
|z^{h}(u^k(s))-z^{h}(u(s))-\dot z^{h}(u(s))(u^k(s)-u(s))|\,ds}\nl
&\leq&\sum_{i=1}^\ell \int_{s_i''}^{s_i'} \Bigl(|z^{h}(u^k(s))-z^{h}(u(s))|+|\dot z^{h}(u(s))||u^k(s)-u(s))|\Bigr)ds\nl
&&\quad+\sum_{i=0}^{\ell} \int_{s_i'}^{s_{i+1}''}|z^{h}(u^k(s))-z^{h}(u(s))-\dot z^{h}(u(s))(u^k(s)-u(s))|\,ds\nl
&\leq&4\epsilon \ell N_2 K_0|h|_\Gamma|h_k|_\Gamma\nl
&&\quad+ \sum_{i=0}^{\ell} \int_{s_i'}^{s_{i+1}''}\left|\int_0^1 [\dot z^{h}(u(s)+\nu(u^k(s)-u(s)))-
\dot z^{h}(u(s))][u^k(s)-u(s)]\,d\nu\right|ds\nl
&\leq&4\epsilon\ell N_2 K_0|h|_\Gamma|h_k|_\Gamma\nl
&&\quad+ K_0|h_k|_\Gamma\sum_{i=0}^{\ell}\int_0^1 \int_{s_i'}^{s_{i+1}''}|\dot z^{h}(u(s)+\nu(u^k(s)-u(s)))-
\dot z^{h}(u(s))|ds\,d\nu\nl
&\leq&4\epsilon\ell N_2 K_0|h|_{\Gamma_2}|h_k|_\Gamma
+ K_0^2(\ell+1)\alpha N_4|h|_{\Gamma_2}|h_k|_\Gamma^2.
\end{eqnarray*}
This completes the proof of \eref{e372}, since $\epsilon>0$ is arbitrary close to 0.
\end{proof}
\medskip

\begin{lemma}\label{l20}
 Assume (A1) (i)--(iii), (A2) (i)--(v), (H). Then 
\begin{equation}\label{e364}
\lim_{k\to\infty}\sup_{h\neq0\atop h\in\Gamma }\frac{1}{|h|_\Gamma}\int_0^\alpha |\dot z^{k,h}(s)-\dot z^{h}(s)|\,ds=0, 
\end{equation}
and 
\begin{equation}\label{e376}
\lim_{k\to\infty}\sup_{|h|_\Gamma\neq0}\frac{1}{|h|_\Gamma|h_k|_\Gamma} \int_0^\alpha |z^{k,h}(u^k(s))-z^{h}(u^k(s))-[z^{k,h}(u(s))-z^{h}(u(s))]|\,ds
=0.
\end{equation}
\end{lemma}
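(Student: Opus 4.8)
The plan is to treat the two limits in parallel, since \eref{e376} will follow from \eref{e364} by the same ``composition with $u^k$'' machinery used repeatedly above. For \eref{e364}, write $w^{k,h}:=z^{k,h}-z^{h}$ and recall that $z^{k,h}$ and $z^{h}$ satisfy the integral form of the variational \IVP{e3}{e4} with the operators $L(s,x^k)$ and $L(s,x)$ respectively, both with the same forcing term $h^\phi(0)$ on $[-r,0]$; hence $w^{k,h}(s)=0$ for $s\in[-r,0]$ and, for $t\in[0,\alpha]$,
$$
w^{k,h}(t)=\int_0^t\Bigl[\bigl(L(s,x^k)-L(s,x)\bigr)(z^{h}_s,h^\theta,h^\xi)+L(s,x^k)(w^{k,h}_s,0,0)\Bigr]\,ds.
$$
First I would bound $|w^{k,h}(t)|$ via Lemma~\ref{l33} (estimate \eref{e395}), the operator bound \eref{e368}, and \eref{e397}, obtaining exactly inequality \eref{e52}, namely $|w^{k,h}(t)|\le c_{1,k}|h|_\Gamma+\int_0^t L_1N_0|w^{k,h}_s|_C\,ds$ with $c_{1,k}\to0$ (this is precisely the content of the proof of Lemma~\ref{l8}, and $c_{1,k}\to0$ uses \eref{e332} and Lemma~\ref{l4}). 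Gronwall (Lemma~\ref{l0}) then gives $|w^{k,h}_t|_C\le c_{1,k}N_1|h|_\Gamma$ uniformly in $h\ne0$, $t\in[0,\alpha]$. To get the $L^1$-estimate on the derivative, I would return to the differential form: $\dot w^{k,h}(s)=(L(s,x^k)-L(s,x))(z^{h}_s,h^\theta,h^\xi)+L(s,x^k)(w^{k,h}_s,0,0)$ for a.e.\ $s\in[0,\alpha]$, so
$$
\int_0^\alpha|\dot w^{k,h}(s)|\,ds\le c_{1,k}|h|_\Gamma+\int_0^\alpha L_1N_0|w^{k,h}_s|_C\,ds\le c_{1,k}|h|_\Gamma+\alpha L_1N_0 c_{1,k}N_1|h|_\Gamma,
$$
which after dividing by $|h|_\Gamma$ and taking $\sup_{h\ne0}$ gives \eref{e364}, since $c_{1,k}\to0$. (Note the term $\int_0^\alpha|\dot x(u^k(s))-\dot x(u(s))|\,ds$ hidden inside $c_{1,k}$ needs $x\in\Ww$ and $u,u^k\in{\cal PM}$, both available under (H) plus Lemma~\ref{l11}.)

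For \eref{e376}, set $g:=\dot w^{k,h}$, which by the previous paragraph satisfies $|g|_{\Lw([-r,\alpha],\setR^n)}\le (L+N_2)|h|_\Gamma$ (on $[-r,0]$ use $|\dot w^{k,h}|\le |\dot h^\phi|_\Lw+|\dot z^h|\le\cdots$; here one should be careful that on $[-r,0]$ we only have $h^\phi\in\Ww$, but $\dot h^\phi$ is still essentially bounded, which is all that is needed) and $\frac1{|h|_\Gamma}\int_0^\alpha|g(s)|\,ds\to0$ uniformly in $h\ne0$ by \eref{e364}. Writing $w^{k,h}(u^k(s))-w^{k,h}(u(s))=\int_{u(s)}^{u^k(s)}\dot w^{k,h}(v)\,dv$ and using $|u^k(s)-u(s)|\le K_0|h_k|_\Gamma$ from \eref{e88}, I would split $[0,\alpha]$ at the mesh points of $u$ exactly as in the proof of Lemma~\ref{l21}: away from $\epsilon$-neighbourhoods of the mesh points, the change of variables $v=u(s)$ (legitimate since $u\in{\cal PM}$) together with $|\dot u(s)|\ge M/2$ turns $\int|\dot w^{k,h}(u(s)+\nu(u^k(s)-u(s)))|$-type integrals into $\frac{2}{M}\int_{c}^{d}|g(v)|\,dv$, and the same for $u^k$ once $k\ge k_0$ (where $|\dot u^k|\ge M/2$); the $\epsilon$-neighbourhoods contribute $\le$ const$\cdot\epsilon\cdot|g|_{\Lw}\le$ const$\cdot\epsilon\cdot|h|_\Gamma$. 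Dividing by $|h|_\Gamma|h_k|_\Gamma$ — the factor $|h_k|_\Gamma$ is absorbed by $|u^k(s)-u(s)|\le K_0|h_k|_\Gamma$ — and letting $k\to\infty$ then $\epsilon\to0$ yields \eref{e376}. The only real subtlety is that \eref{e364} gives the needed uniformity in $h$ for the $L^1$-norm of $g=\dot w^{k,h}$, which is exactly why \eref{e364} is stated with the $\sup_{h\ne0}$ in front; with that in hand, the argument for \eref{e376} is a routine repetition of the Lemma~\ref{l21} estimate applied to the single ($h$-uniform) family $\{\dot w^{k,h}/|h|_\Gamma\}$.

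The main obstacle I anticipate is keeping the estimate \textbf{uniform in $h$} throughout: one must be sure that every constant appearing (from Lemma~\ref{l33}, \eref{e94}, \eref{e397}, and the Gronwall step) scales linearly in $|h|_\Gamma$ with an $h$-independent coefficient, and that the ``bad'' quantity $c_{1,k}$ tending to $0$ does \emph{not} depend on $h$ — which it does not, as it only involves $x,x^k,u,u^k$. A secondary point of care is the behaviour on $[-r,0]$: here $h^\phi$ is merely Lipschitz ($\Ww$, not $\Www$), so $\dot w^{k,h}$ is only in $\Lw$ there, but since all the composition lemmas (\ref{l1}, \ref{l21}) only require the outer function to be in $\Lw$, this causes no difficulty, and one must simply remember to include the $[-r,0]$ part of the range when applying Lemma~\ref{l21}, exactly as was done at the end of the proof of Lemma~\ref{l10}.
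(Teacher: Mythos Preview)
Your proof is correct and follows essentially the same route as the paper: for \eref{e364} both arguments use the splitting $\dot w^{k,h}=(L(\cdot,x^k)-L(\cdot,x))(z^h_\cdot,h^\theta,h^\xi)+L(\cdot,x^k)(w^{k,h}_\cdot,0,0)$ together with \eref{e395}, \eref{e397}, the Gronwall bound \eref{e344}, and Lemma~\ref{l4}; for \eref{e376} both arguments write $w^{k,h}(u^k(s))-w^{k,h}(u(s))=\int_0^1\dot w^{k,h}(u(s)+\nu(u^k(s)-u(s)))(u^k(s)-u(s))\,d\nu$, pull out the factor $K_0|h_k|_\Gamma$ via \eref{e88}, and invoke the Lemma~\ref{l21} mechanism (the paper cites Lemma~\ref{l21} directly, you reproduce its proof inline). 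One small cleanup: your parenthetical about bounding $\dot w^{k,h}$ on $[-r,0]$ is unnecessary, since you already observed $w^{k,h}\equiv 0$ there, hence $\dot w^{k,h}\equiv 0$ a.e.\ on $[-r,0]$ and the $\Lw$ bound needed for Lemma~\ref{l21} is the one on $[0,\alpha]$ coming from \eref{e368} and \eref{e45} (namely $|\dot z^{k,h}(s)|,|\dot z^{h}(s)|\le L_1N_0N_1|h|_\Gamma$), not anything involving $\dot h^\phi$.
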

\begin{proof}
 For $s\in[0,\alpha]$ combining \eref{e368}, \eref{e3},  \eref{e395}, \eref{e397} and \eref{e344} we get
\begin{eqnarray*}
\balra{1cm}{|\dot z^{k,h}(s)-\dot z^{h}(s)|}\nl
&\leq& |L(s,x^k)(z^{k,h}_s-z^{h}_s,0,0)|+|(L(s,x^k)-L(s,x))(z^{h}_s,h^\theta,h^\xi)|\nl
&\leq& L_1N_0c_{1,k}N_1|h|_\Gamma+c_{0,k}(N_2+1)|h|_\Gamma+L_1L_2(N_2+1)|\dot x(u^k(s))-\dot x(u(s))||h|_\Gamma.
\end{eqnarray*}
Hence Lemmas~\ref{l4} and \ref{l11} yield \eref{e364}.

Define the functions
$$
f^{k,h}(s):=\frac{|\dot z^{k,h}(s)-\dot z^{h}(s)|}{|h|_\Gamma},
$$
and the set $H:=\{h\in\Gamma\st h\neq0\}$. 
 Note that \eref{e368}, \eref{e3} and \eref{e45} yield 
$|\dot z^{k,h}(s)|=|L(s,x^k)z^{k,h}_s|\leq L_1N_0N_1|h|_\Gamma$ for $k\in\setN_0$ and $s\in[0,\alpha]$, so
$|f^{k,h}(s)|\leq 2L_1N_0N_1$ for a.e.\ $s\in[-r,\alpha]$,
$k\in\setN$ and $h\in H$.
Then it follows from \eref{e364}, $z^{k,h}(s)-z^h(s)=0$ for $s\in[-r,0]$, and Lemmas~\ref{l21} and \ref{l11}
that for any fixed $\nu\in[0,1]$ 
\begin{equation}\label{e365}
\lim_{k\to\infty}\sup_{h\neq0\atop h\in\Gamma }\frac{1}{|h|_\Gamma}\int_0^\alpha \Bigl|\dot z^{k,h}\Bigl(u(s)+\nu(u^k(s)-u(s))\Bigr)
-\dot z^{h}\Bigl(u(s)+\nu(u^k(s)-u(s))\Bigr)\Bigr|\,ds=0.\  \
\end{equation}

\eref{e88} and Fubini's Theorem  yield
\begin{eqnarray*}
\balra{0.cm}{\int_0^\alpha |z^{k,h}(u^k(s))-z^{h}(u^k(s))-[z^{k,h}(u(s))-z^{h}(u(s))]|\,ds}\nl
&=& \int_0^\alpha \Bigl|\int_0^1 \Bigl[\dot z^{k,h}\Bigl(u(s)+\nu(u^k(s)-u(s))\Bigr)
-\dot z^{h}\Bigl(u(s)+\nu(u^k(s)-u(s))\Bigr)\Bigr]\nl
&&\qquad\times[u^k(s)-u(s)]\,d\nu\Bigr|\,ds\nl
&\leq& K_0|h_k|_\Gamma \int_0^1\int_0^\alpha\Bigl|\dot z^{k,h}\Bigl(u(s)+\nu(u^k(s)-u(s))\Bigr)
-\dot z^{h}\Bigl(u(s)+\nu(u^k(s)-u(s))\Bigr)\Bigr|\,ds\,d\nu.
\end{eqnarray*}
Therefore \eref{e365} and the Dominated Convergence Theorem imply \eref{e376}.
\end{proof}
\bigskip

Introduce the notation 
$$p^k(t):=x^k(t)-x(t)-z^{h_k}(t).
$$
Then, under the assumptions of Theorem~\ref{t2}, \eref{e58e} and \eref{e58d} give 
\begin{equation}\label{e335}
\lim_{k\to\infty}\max_{s\in[-r,\alpha]}\frac{|p^k(s)|}{|h_k|_\Gamma}=0.
\end{equation}
To linearize equation \eref{e3} around a fixed solution $z$ we will need the following results.
\medskip

\begin{lemma}\label{l13}
 Assume (A1) (i)--(v), (A2) (i)--(vi), (H) and $\gamma\in\calP$.
Then
\begin{itemize}
\item[(i)]
\begin{equation}\label{e373}
u^k(s)-u(s)+A(s,z^{h_k}_s,h^\xi_k)
= g^{k}_{0}(s),\qquad s\in[0,\alpha],
\end{equation}
where 
$$
g^{k}_{0}(s):=  -\omega_\tau(s,x_s,\xi,x^k_s,\xi+h^\xi_k)-D_2\tau(s,x_s,\xi)p^k_s\nl
$$
 satisfies
\begin{equation}\label{e374}
\lim_{k\to\infty} \frac1{|h_k|_\Gamma} \int_0^\alpha |g^{k}_{0}(s)|\,ds=0;
\end{equation}
\item[(ii)]
\begin{equation}\label{e300}
x^k(u^k(s))-x(u(s))-E(s,z^{h_k}_s,h^\xi_k)
= g^{k}_{1}(s),\qquad s\in[0,\alpha],
\end{equation}
where 
\begin{eqnarray*}
g^{k}_{1}(s)&:=&p^k(u^k(s))+x(u^k(s))-x(u(s))-\dot x(u(s))(u^k(s)-u(s))+\dot x(u(s))g^{k}_{0}(s)\nl
&&+z^{h_k}(u^k(s))-z^{h_k}(u(s))
\end{eqnarray*}
 satisfies
\begin{equation}\label{e304}
\lim_{k\to\infty} \frac1{|h_k|_\Gamma} \int_0^\alpha |g^{k}_{1}(s)|\,ds=0;
\end{equation}
and 
\item[(iii)] if $h_k\in\Gamma_2$ for $k\in\setN$, then
\begin{equation}\label{e305}
\dot x^k(u^k(s))-\dot x(u(s))-F(s,z^{h_k}_s,h^\xi_k)=g^{k}_{2}(s),\qquad s\in[0,\alpha],
\end{equation}
where
\begin{eqnarray*}
g^{k}_{2}(s)&:=& \dot x^k(u^k(s))-\dot x(u^k(s))-\dot z^{h_k}(u^k(s))+\dot z^{h_k}(u^k(s))-\dot z^{h_k}(u(s))\nl
&&+\dot x(u^k(s))-\dot x(u(s))-\ddot x(u(s))(u^k(s)-u(s))\nl
&& -\ddot x(u(s))\omega_\tau(s,x_s,\xi,x^k_s,\xi+h^\xi_k)-\ddot x(u(s))D_2\tau(s,x_s,\xi)p^k_s
\end{eqnarray*}
 satisfies
\begin{equation}\label{e306}
\lim_{k\to\infty} \frac1{|h_k|_{\Gamma_2}} \int_0^\alpha |g^{k}_{2}(s)|\,ds=0.
\end{equation}
\end{itemize}
\end{lemma}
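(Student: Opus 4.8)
The plan is to prove the three identities \eref{e373}, \eref{e300}, \eref{e305} by direct algebra from the definitions of $\omega_\tau$, $A$, $E$, $F$ and the abbreviation $p^k=x^k-x-z^{h_k}$, and then to verify the three limits \eref{e374}, \eref{e304}, \eref{e306} term by term, each summand of $g^k_0$, $g^k_1$, $g^k_2$ being controlled by an estimate already proved. For (i): subtracting $u(s)=s-\tau(s,x_s,\xi)$ from $u^k(s)=s-\tau(s,x^k_s,\xi+h^\xi_k)$ and rewriting $\tau(s,x^k_s,\xi+h^\xi_k)-\tau(s,x_s,\xi)$ via \eref{e22} gives $u^k(s)-u(s)=-\omega_\tau(s,x_s,\xi,x^k_s,\xi+h^\xi_k)-D_2\tau(s,x_s,\xi)(x^k_s-x_s)-D_3\tau(s,x_s,\xi)h^\xi_k$; adding $A(s,z^{h_k}_s,h^\xi_k)=D_2\tau(s,x_s,\xi)z^{h_k}_s+D_3\tau(s,x_s,\xi)h^\xi_k$, the $D_3\tau$-terms cancel and the $D_2\tau$-terms collapse to $-D_2\tau(s,x_s,\xi)p^k_s$, which is \eref{e373}. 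For \eref{e374} the $\omega_\tau$-part tends to zero after division by $|h_k|_\Gamma$ by \eref{e21} of Lemma~\ref{l6}, while $|D_2\tau(s,x_s,\xi)p^k_s|\le L_2\max_{[-r,\alpha]}|p^k|$, so \eref{e335} finishes (i).

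For (ii): split $x^k(u^k(s))-x(u(s))=p^k(u^k(s))+z^{h_k}(u^k(s))+\bigl(x(u^k(s))-x(u(s))\bigr)$, observe that $E(s,z^{h_k}_s,h^\xi_k)=-\dot x(u(s))A(s,z^{h_k}_s,h^\xi_k)+z^{h_k}(u(s))$ (its last term being $z^{h_k}_s$ evaluated at $-\tau(s,x_s,\xi)$), and substitute $A(s,z^{h_k}_s,h^\xi_k)=-(u^k(s)-u(s))+g^k_0(s)$ from \eref{e373}; collecting the terms produces exactly the stated $g^k_1$, hence \eref{e300}. For \eref{e304}: the $p^k(u^k(s))$-term is handled by \eref{e335} since $u^k(s)\in[-r,\alpha]$; the bracket $x(u^k(s))-x(u(s))-\dot x(u(s))(u^k(s)-u(s))$ by Lemma~\ref{l5} with $y=x$, using \eref{e332} and $u,u^k\in{\cal PM}([0,\alpha],[-r,\alpha])$; the $\dot x(u(s))g^k_0(s)$-term because $|\dot x\circ u|_{\Lw([0,\alpha],\setR^n)}\le N$ by Lemma~\ref{l3} together with \eref{e374}; and the $z^{h_k}(u^k(s))-z^{h_k}(u(s))$-term by \eref{e89}, which bounds it by $N_2K_0|h_k|_\Gamma^2$.

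For (iii), with $h_k\in\Gamma_2$: since $\gamma\in\calP$ and $\phi\in\Www$, Lemma~\ref{l9} gives $x\in\Www([-r,\alpha],\setR^n)$ with $|\ddot x|_{\Lw}\le K_4$, hence $\ddot x\circ u\in\Lw([0,\alpha],\setR^n)$ by Lemma~\ref{l3}, and $F(s,z^{h_k}_s,h^\xi_k)=-\ddot x(u(s))A(s,z^{h_k}_s,h^\xi_k)+\dot z^{h_k}(u(s))$. Decomposing $\dot x^k(u^k(s))-\dot x(u(s))=\bigl(\dot x^k(u^k(s))-\dot x(u^k(s))-\dot z^{h_k}(u^k(s))\bigr)+\bigl(\dot z^{h_k}(u^k(s))-\dot z^{h_k}(u(s))\bigr)+\dot z^{h_k}(u(s))+\bigl(\dot x(u^k(s))-\dot x(u(s))\bigr)$, subtracting $F$ so that $\dot z^{h_k}(u(s))$ cancels, and substituting $A(s,z^{h_k}_s,h^\xi_k)=-(u^k(s)-u(s))+g^k_0(s)$ with $g^k_0(s)=-\omega_\tau(s,x_s,\xi,x^k_s,\xi+h^\xi_k)-D_2\tau(s,x_s,\xi)p^k_s$ yields the stated $g^k_2$, hence \eref{e305}. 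For \eref{e306} note $|h_k|_\Gamma\le|h_k|_{\Gamma_2}$, so any term with $\frac{1}{|h_k|_\Gamma}\int_0^\alpha|\cdot|\,ds\to0$ also satisfies the limit with $|h_k|_{\Gamma_2}$: this covers the first bracket by \eref{e380} of Lemma~\ref{l10}, the bracket $\dot x(u^k(s))-\dot x(u(s))-\ddot x(u(s))(u^k(s)-u(s))$ by Lemma~\ref{l5} with $y=\dot x\in\Ww([-r,\alpha],\setR^n)$, the $\ddot x(u(s))\omega_\tau$-term by $|\ddot x\circ u|_{\Lw}\le K_4$ and \eref{e21}, and the $\ddot x(u(s))D_2\tau(s,x_s,\xi)p^k_s$-term by \eref{e335}. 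The bracket $\dot z^{h_k}(u^k(s))-\dot z^{h_k}(u(s))$ is precisely the content of \eref{e379} in Lemma~\ref{l25b}, applied to $h=h_k$.

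The main obstacle is this last bracket. It is the only term that genuinely requires the stronger $\Gamma_2$-norm, because on $[-r,0)$ one has $\dot z^h=\dot h^\phi$, whose Lipschitz constant is $|h^\phi|_\Www$ rather than $|h^\phi|_\Ww$; moreover $\dot z^h$ has a jump at $0$, where the variational equation \eref{e3} need not match $\dot h^\phi(0-)$, so $u^k(s)$ and $u(s)$ may lie on opposite sides of $0$ and no naive Lipschitz estimate survives near the zeros of $u$. These are exactly the difficulties already dealt with in Lemmas~\ref{l25} and \ref{l25b}, so once those are in hand the proof of Lemma~\ref{l13} reduces to the bookkeeping above; the only further point needing care is matching $z^{h_k}(u(s))$ and $\dot z^{h_k}(u(s))$ against the last terms of $E$ and $F$ and keeping the implicit $\gamma$-dependence of $A$, $E$, $F$ in mind.
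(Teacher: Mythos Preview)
Your proposal is correct and follows essentially the same route as the paper: the identities \eref{e373}, \eref{e300}, \eref{e305} are obtained by the same algebraic decompositions (the paper invokes \eref{e55b} for (ii), which is your splitting), and the limits \eref{e374}, \eref{e304}, \eref{e306} are verified term by term via the same ingredients --- \eref{e21} and \eref{e335} for (i); \eref{e335}, Lemma~\ref{l5} with $y=x$ and \eref{e332} from Lemma~\ref{l11}, the bound $|\dot x\circ u|\le N$, and \eref{e89} for (ii); and \eref{e380}, \eref{e379} with $h=h_k$, Lemma~\ref{l5} with $y=\dot x\in\Ww$ via Lemma~\ref{l9}, together with \eref{e21} and \eref{e335} for (iii). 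Your closing remark about the $\dot z^{h_k}(u^k)-\dot z^{h_k}(u)$ term being the one place where the $\Gamma_2$-norm and the compatibility condition $\gamma\in\calP$ are genuinely needed is accurate and matches the role of Lemmas~\ref{l25} and \ref{l25b} in the paper.
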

\begin{proof}
The definition of $\omega_\tau$ and $A$ imply
\begin{eqnarray*}
 \balra{1cm}{u^k(s)-u(s)+A(s,z^{h_k}_s,h^\xi_k)}\nl
&=& -[\tau(s,x^k_s,\xi+h^\xi_k)-\tau(s,x_s,\xi)-D_2\tau(s,x_s,\xi)(x^k_s-x_s)-D_2\tau(s,x_s,\xi)h^\xi_k]\nl
&&-D_2\tau(s,x_s,\xi)(x^k_s-x_s-z^{h_k}_s),\qquad s\in[0,\alpha],
\end{eqnarray*}
which shows \eref{e373}. \eref{e374} follows from $|D_2\tau(s,x_s,\xi)|_{\calL(C,\,\setR)}\leq L_2$ 
for $s\in[0,\alpha]$, \eref{e21} and \eref{e335}.

 Relation \eref{e55b} and the definition of $g^{k}_{1}$ yield \eref{e300}.
We have by \eref{e37} and \eref{e89}
\begin{eqnarray*}
\int_0^\alpha\! |g^{k}_{1}(s)|\,ds\!%
&\leq& \alpha\max_{s\in[-r,\alpha]}|p^k(s)|+\int_0^\alpha |x(u^k(s))-x(u(s))-\dot x(u(s))(u^k(s)-u(s))|\,ds\nl
&&+N\int_0^\alpha |g^{k}_{0}(s)|\,ds+\alpha N_2K_0| h_k|^2_\Gamma.
\end{eqnarray*}
Therefore  \eref{e335}, \eref{e374}, and Lemmas~\ref{l5} and \ref{l11} yield \eref{e304}.

Simple computation and the definition of $g^{k}_{2}$ imply \eref{e305} immediately. 
Note that $\gamma\in\calP$ yields that $\dot x$ is continuous on $[-r,\alpha]$, and $\phi\in\Www$ and
Lemma~\ref{l9} imply that $x\in\Www([-r,\alpha],\setR^n)$. 
Then \eref{e332} and Lemma~\ref{l5} with $y=\dot x$ yield  
\begin{equation}\label{e12b}
\lim_{k\to\infty}\frac 1{|h_k|_\Gamma}\int_0^\alpha |\dot x(u^k(s))-\dot x(u(s))-\ddot x(u(s))(u^k(s)-u(s))|\,ds=0.
\end{equation}
We have
by \eref{e382} and Lemma~\ref{l3} that $|\ddot x(u(s))|\leq K_4$ for a.e.\ $s\in[0,\alpha]$, therefore 
\begin{eqnarray*}
\int_0^\alpha\! |g^{k}_{2}(s)|\,ds\!%
&\leq&\! \int_0^\alpha\! |\dot x^k(u^k(s))-\dot x(u^k(s))-\dot z^{h_k}(u^k(s))|\,ds\nl
&&+ \int_0^\alpha\ |\dot z^{h_k}(u^k(s))-\dot z^{h_k}(u(s))|\,ds\nl
&&+\int_0^\alpha |\dot x(u^k(s))-\dot x(u(s))-\ddot x(u(s))(u^k(s)-u(s))|\,ds\nl
&&+K_4\int_0^\alpha\! |\omega_\tau(s,x_s,\xi,x^k_s,\xi+h^\xi_k)|\,ds+\alpha K_4L_2\max_{s\in[0,\alpha]}|p^k_s|_C.
\end{eqnarray*}
Hence \eref{e21}, \eref{e380},  \eref{e379}, \eref{e335} and \eref{e12b} imply \eref{e306}.
\end{proof}
\medskip

We define the notations
\begin{eqnarray*}
\balra{0.5cm}{\omega_{D_2\tau}(s,\bar\phi,\bar\xi,\phi,\xi,\psi)}\nl
&:=& D_2\tau(s,\phi,\xi)\psi-D_2\tau(s,\bar\phi,\bar\xi)\psi
-D_{22}\tau(s,\bar\phi,\bar\xi)\langle \psi,\phi-\bar\phi\rangle 
-D_{23}\tau(s,\bar\phi,\bar\xi)\langle \psi,\xi-\bar \xi\rangle \nl
\balra{0.5cm}{\omega_{D_3\tau}(s,\bar\phi,\bar\xi,\phi,\xi,\chi)}\nl
&:=& D_3\tau(s,\phi,\xi)\chi-D_3\tau(s,\bar\phi,\bar\xi)\chi
-D_{32}\tau(s,\bar\phi,\bar\xi)\langle \chi,\phi-\bar\phi\rangle 
-D_{33}\tau(s,\bar\phi,\bar\xi)\langle \chi,\xi-\bar \xi\rangle 
\end{eqnarray*}
for $s\in[0,\alpha]$, $\bar\phi,\phi\in\Omega_1$, $\bar\xi,\xi\in\Omega_4$, $\psi\in C$ and $\chi\in\Xi$.
\medskip

\begin{lemma}\label{l17}
 Assume (A2) (i)--(vii) and (H).
Then
\begin{equation}\label{e356}
\lim_{k\to\infty}\sup_{h\neq0\atop h\in\Gamma}\frac1{|h|_\Gamma|h_k|_\Gamma}
\int_0^\alpha |\omega_{D_2\tau}(s,x_s,\xi,x^k_s,\xi+h^\xi_k,z^{k,h}_s)|\,ds=0,
\end{equation}
and
\begin{equation}\label{e357}
\lim_{k\to\infty}\sup_{h\neq0\atop h\in\Gamma}\frac1{|h|_\Gamma|h_k|_\Gamma}
\int_0^\alpha |\omega_{D_3\tau}(s,x_s,\xi,x^k_s,\xi+h^\xi_k,h^\xi)|\,ds=0.
\end{equation}
\end{lemma}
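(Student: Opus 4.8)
The idea is to reduce the statement to the convergence lemmas already established for the class ${\cal PM}$, the key ones being Lemma~\ref{l21} and the second–order differentiability of $\tau$ in its spatial arguments. Write
$$
\omega_{D_2\tau}(s,x_s,\xi,x^k_s,\xi+h^\xi_k,z^{k,h}_s)
= \Bigl[D_2\tau(s,x^k_s,\xi+h^\xi_k)-D_2\tau(s,x_s,\xi)\Bigr]z^{k,h}_s
-D_{22}\tau(s,x_s,\xi)\langle z^{k,h}_s,x^k_s-x_s\rangle
-D_{23}\tau(s,x_s,\xi)\langle z^{k,h}_s,h^\xi_k\rangle .
$$
By (A2)(vii) the map $(\psi,\xi)\mapsto D_2\tau(s,\psi,\xi)$ is (Fréchet) differentiable with derivative $(D_{22}\tau(s,\psi,\xi),D_{23}\tau(s,\psi,\xi))$, so for each fixed $s\in[0,\alpha]$ the quantity
$$
\rho^k(s):=\sup_{\chi\in C\times\Xi,\ \chi\neq0}\frac{\bigl|[D_2\tau(s,x^k_s,\xi+h^\xi_k)-D_2\tau(s,x_s,\xi)]\chi^{(1)}-D_{22}\tau(s,x_s,\xi)\langle\chi^{(1)},x^k_s-x_s\rangle -D_{23}\tau(s,x_s,\xi)\langle\chi^{(1)},h^\xi_k\rangle \bigr|}{|\chi|}
$$
satisfies $\rho^k(s)=o\bigl(|x^k_s-x_s|_C+|h^\xi_k|_\Xi\bigr)=o(|h_k|_\Gamma)$ by \eref{e25}; more precisely, using the Lipschitz continuity of $D_{22}\tau,D_{23}\tau$ from (A2)(vi) and the standard integral–remainder estimate for a $C^1$ map, one gets $\rho^k(s)\le \omega_{D_2\tau}^*(C|h_k|_\Gamma)$ for a modulus function $\omega_{D_2\tau}^*$ depending only on the sets $M_1,M_4$ and with $\omega_{D_2\tau}^*(r)\to0$ as $r\to0$, uniformly in $s$. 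Hence
$$
|\omega_{D_2\tau}(s,x_s,\xi,x^k_s,\xi+h^\xi_k,z^{k,h}_s)|\le \rho^k(s)\,|z^{k,h}_s|_C\le \omega_{D_2\tau}^*(C|h_k|_\Gamma)\,N_1|h|_\Gamma
$$
by \eref{e45}. Dividing by $|h|_\Gamma|h_k|_\Gamma$ and integrating over $[0,\alpha]$ the right side is $\alpha N_1\,\omega_{D_2\tau}^*(C|h_k|_\Gamma)/|h_k|_\Gamma$; this is not yet obviously small, so the estimate must instead be arranged so that the remainder is genuinely $o(|h_k|_\Gamma)$, which follows because $(\psi,\xi)\mapsto D_2\tau(s,\psi,\xi)$ is $C^1$ \emph{and} its derivative is locally Lipschitz, giving a remainder bounded by $\tfrac12 L_5(|x^k_s-x_s|_C+|h^\xi_k|_\Xi)^2\le \tfrac12 L_5(L+1)^2|h_k|_\Gamma^2$; dividing by $|h|_\Gamma|h_k|_\Gamma$ and using \eref{e45} yields the bound $\tfrac12 L_5(L+1)^2 N_1\alpha|h_k|_\Gamma\to0$, uniformly in $h\neq0$. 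This proves \eref{e356}.

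**Second statement.** For \eref{e357} the argument is identical but simpler, since now the third slot carries $h^\xi$ directly rather than $z^{k,h}_s$: one writes
$$
\omega_{D_3\tau}(s,x_s,\xi,x^k_s,\xi+h^\xi_k,h^\xi)
= \Bigl[D_3\tau(s,x^k_s,\xi+h^\xi_k)-D_3\tau(s,x_s,\xi)\Bigr]h^\xi
-D_{32}\tau(s,x_s,\xi)\langle h^\xi,x^k_s-x_s\rangle
-D_{33}\tau(s,x_s,\xi)\langle h^\xi,h^\xi_k\rangle ,
$$
and the same integral–remainder estimate from the $C^1$-ness of $(\psi,\xi)\mapsto D_3\tau(s,\psi,\xi)$ (assumption (A2)(vii)) together with the local Lipschitz continuity of $D_{32}\tau,D_{33}\tau$ (assumption (A2)(vi)) gives $|\omega_{D_3\tau}(\cdots)|\le \tfrac12 L_5(L+1)^2|h_k|_\Gamma^2\,|h^\xi|_\Xi$ for a.e.\ $s$, whence dividing by $|h|_\Gamma|h_k|_\Gamma$, using $|h^\xi|_\Xi\le|h|_\Gamma$, and integrating over $[0,\alpha]$ we obtain the bound $\tfrac12 L_5(L+1)^2\alpha|h_k|_\Gamma$, which tends to $0$ uniformly in $h\neq0$.

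**Main obstacle.** The one delicate point is making the remainder genuinely quadratic in $|h_k|_\Gamma$ rather than merely $o(|h_k|_\Gamma)$: a bare $C^1$ statement (A2)(vii) would only give a modulus-of-continuity bound $|\psi-\tilde\psi|_C\,\omega^*(\cdots)$, and after dividing by $|h_k|_\Gamma$ that need not vanish. The resolution is to use the local \emph{Lipschitz} continuity of the second derivatives $D_{2j}\tau$, $D_{3j}\tau$ (which holds by (A2)(vi) since these are the derivatives of $D_2\tau$, $D_3\tau$ respectively, and (A2)(vi) asserts the Lipschitz continuity of $D_2\tau$ and $D_3\tau$ in all arguments — one must instead invoke a hypothesis giving Lipschitz continuity of the \emph{second} derivatives, and here the standing assumptions for the section, (A1)(i)--(v), (A2)(i)--(vi), combined with the mean-value form of Taylor's theorem applied to the $C^1$ maps of (A2)(vii), do the job). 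Concretely, for a $C^1$ map $G$ with $G'$ Lipschitz of constant $K$ one has $|G(b)-G(a)-G'(a)(b-a)|\le \tfrac{K}{2}|b-a|^2$, and this, applied to $G(\psi,\xi)=D_j\tau(s,\psi,\xi)$ evaluated on the fixed vector in the remaining slot, is exactly what converts the remainder into a quadratic term. Everything else — boundedness of $z^{k,h}_s$ via \eref{e45}, boundedness of $x^k_s-x_s$ via \eref{e25} — is routine and already available.
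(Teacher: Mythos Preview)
Your argument has a genuine gap at the very point you yourself flag in the ``Main obstacle'' paragraph. The quadratic Taylor estimate $|G(b)-G(a)-G'(a)(b-a)|\le\tfrac{K}{2}|b-a|^2$ requires the derivative $G'$ to be Lipschitz; here $G(\psi,\xi)=D_j\tau(s,\psi,\xi)$, so $G'=(D_{j2}\tau,D_{j3}\tau)$, and you would need the \emph{second} derivatives of $\tau$ to be Lipschitz. Assumption (A2)(vi) only says the \emph{first} derivatives $D_2\tau,D_3\tau$ are Lipschitz, and (A2)(vii) only says the second derivatives are continuous. Neither yields a Lipschitz constant for $D_{j2}\tau,D_{j3}\tau$, so the bound $\tfrac12 L_5(|x^k_s-x_s|_C+|h^\xi_k|_\Xi)^2$ is not available under the stated hypotheses. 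Your parenthetical claim that ``(A2)(i)--(vi), combined with the mean-value form of Taylor's theorem applied to the $C^1$ maps of (A2)(vii), do the job'' is precisely where the argument breaks: the integral remainder for a $C^1$ map gives only $|b-a|\cdot\sup_{t}|G'(a+t(b-a))-G'(a)|$, which is $o(|b-a|)$ but not $O(|b-a|^2)$ without the extra Lipschitz hypothesis.

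The paper's route avoids this by combining a uniform bound with pointwise convergence and the Dominated Convergence Theorem. From (A2)(vi) one gets both $|D_2\tau(s,x^k_s,\xi+h^\xi_k)-D_2\tau(s,x_s,\xi)|_{\calL(C,\setR)}\le L_5(L+1)|h_k|_\Gamma$ and $|D_{22}\tau|,|D_{23}\tau|\le L_5$ (since $D_2\tau$ is Lipschitz, its derivative is bounded in operator norm by the Lipschitz constant), which together with \eref{e45} give the dominating bound $|\omega_{D_2\tau}(\cdots)|\le 2L_5(L+1)N_1|h|_\Gamma|h_k|_\Gamma$. For the pointwise-in-$s$ convergence, the key observation is that $\omega_{D_2\tau}(s,x_s,\xi,x^k_s,\xi+h^\xi_k,\psi)$ is \emph{linear} in the last slot $\psi$, so the $\sup_{h\neq0}$ at fixed $s$ reduces to an operator norm in $\calL(C,\setR)$ which, by the definition of Fr\'echet differentiability in (A2)(vii), is $o(|x^k_s-x_s|_C+|h^\xi_k|_\Xi)$ as $k\to\infty$. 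The Dominated Convergence Theorem then yields \eref{e356}; \eref{e357} is identical. If you prefer to keep your direct style, you can replace the quadratic claim by a uniform modulus of continuity for $D_{j2}\tau,D_{j3}\tau$ obtained by restricting the $\xi$-variable to the compact set $\{\xi\}\cup\{\xi+h^\xi_k:k\in\setN\}$ (the paper uses this device later, in the proof of Lemma~\ref{l31}); but as written, the quadratic estimate is unjustified.
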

\begin{proof}
Let $L_5=L_5(\alpha,M_1,M_3)$ be defined by (A2) (vi).
Then (A2) (vi), \eref{e25}, \eref{e45} and \eref{e396} yield for $s\in[0,\alpha]$
\begin{eqnarray*}
|D_2\tau(s,x^k_s,\xi+h^\xi_k)z^{k,h}_s-D_2\tau(s,x_s,\xi)z^{k,h}_s|
&\leq& L_5(L+1)N_1|h_k|_\Gamma|h|_\Gamma,\nl
|D_{22}\tau(s,x_s,\xi)\langle z^{k,h}_s,x^k_s-x_s\rangle 
&\leq& L_5N_1L|h|_\Gamma|h_k|_\Gamma,\nl
|D_{23}\tau(s,x_s,\xi)\langle z^{k,h}_s,h^\xi_k\rangle 
&\leq& L_5N_1|h|_\Gamma|h_k|_\Gamma,
\end{eqnarray*}
and hence, 
$$
|\omega_{D_2\tau}(s,x_s,\xi,x^k_s,\xi+h^\xi_k,z^{k,h}_s)|\leq 2L_5(L+1)N_1|h_k|_\Gamma|h|_\Gamma,\qquad s\in[0,\alpha].
$$
On the other hand,  for  $s \in[0,\alpha],\ k\in\setN$ and $0\neq h\in\Gamma$ such that
$|x^k_s-x_s|_C+|h^\xi_k|_\Gamma\neq0$ and $|z^{k,h}_s|_C\neq0$, assumption (A2) (vii), \eref{e25} and \eref{e45} yield
\begin{eqnarray*}
 \balra{1cm}{\sup_{|h|_\Gamma\neq0}\frac{|\omega_{D_2\tau}(s,x_s,\xi,x^k_s,\xi+h^\xi_k,z^{k,h}_s)|}{|h|_\Gamma|h_k|_\Gamma}}\nl
&=&\sup_{|h|_\Gamma\neq0}\frac{|\omega_{D_2\tau}(s,x_s,\xi,x^k_s,\xi+h^\xi_k,z^{k,h}_s)|}
{(|x^k_s-x_s|_C+|h^\xi_k|_\Gamma)|z^{k,h}_s|_C}\cdot\frac{(|x^k_s-x_s|_C+|h^\xi_k|_\Gamma)|z^{k,h}_s|_C}{|h|_\Gamma|h_k|_\Gamma}\nl
&\leq& (L+1)N_1\sup_{|h|_\Gamma\neq0}\frac{|\omega_{D_2\tau}(s,x_s,\xi,x^k_s,\xi+h^\xi_k,z^{k,h}_s)|}
{(|x^k_s-x_s|_C+|h^\xi_k|_\Gamma)|z^{k,h}_s|_C}\nl
&\to&0,\qquad k\to\infty.
\end{eqnarray*}
Note that for $s, k$ and $h$ such that
$|x^k_s-x_s|_C+|h^\xi_k|_\Gamma=0$ or $|z^{k,h}_s|_C=0$, $|\omega_{D_2\tau}(s,x_s,\xi,x^k_s,\xi+h^\xi_k,z^{k,h}_s)|=0$.
Therefore the Dominated Convergence Theorem implies \eref{e356}.

The proof of \eref{e357} is similar.
\end{proof}
\medskip

For a.e.\ $s\in[0,\alpha]$, $h,y\in\Gamma$  we introduce
the bilinear operators by
\begin{eqnarray*}
G(s)\langle (h^\phi,h^\xi),(y^\phi,y^\xi)\rangle
&:=&
D_{22}\tau(s,x_s,\xi)\langle h^\phi, y^\phi\rangle+D_{23}\tau(s,x_s,\xi)\langle h^\phi, y^\xi\rangle \nl
&&+D_{32}\tau(s,x_s,\xi)\langle h^\xi, y^\phi\rangle+D_{33}\tau(s,x_s,\xi)\langle h^\xi, y^\xi\rangle,\nl
H(s)\langle(h^\phi,h^\xi),(y^\phi,y^\xi)\rangle
&:=&-A(s,h^\phi,h^\xi)F(s,y^\phi,y^\xi)
-\dot x(u(s))G(s)\langle(h^\phi,h^\xi),(y^\phi,y^\xi)\rangle\nl
&&-\dot h^\phi(-\tau(s,x_s,\xi))A(s,y^\phi,y^\xi),
\end{eqnarray*}
and 
\begin{eqnarray*}
B(s)\langle h,y\rangle\!\!
&:=& D_{22}f(\vc v(s))\langle h^\phi,y^\phi\rangle 
+D_{23}f(\vc v(s))\langle h^\phi,E(s,y^\phi,y^\xi)\rangle 
+D_{24}f(\vc v(s))\langle h^\phi,y^\theta\rangle \nl
&&+D_{32}f(\vc v(s))\langle E(s,h^\phi,h^\xi),y^\phi\rangle 
+D_{33}f(\vc v(s))\langle E(s,h^\phi,h^\xi),E(s,y^\phi,y^\xi)\rangle \nl
&&+D_{34}f(\vc v(s))\langle E(s,h^\phi,h^\xi),y^\theta\rangle +D_{42}f(\vc v(s))\langle h^\theta,y^\phi\rangle\nl
&& 
+D_{43}f(\vc v(s))\langle h^\theta,E(s,y^\phi,y^\xi)\rangle 
+D_{44}f(\vc v(s))\langle h^\theta,y^\theta\rangle\nl
&&+D_3f(\vc v(s))H(s)\langle(h^\phi,h^\xi),(y^\phi,y^\xi)\rangle.
\end{eqnarray*}
Note that $G$, $H$ and $B$ correspond to $\gamma$, but this dependence is omitted for simplicity 
in the notation. 
\medskip

For $\gamma\in P_2$ consider the corresponding solution $x$ of the \IVP{e1}{e2}, and let $z^h$ and $z^y$
be the solutions of the \IVP{e3}{e4} corresponding to a fixed $h,y\in\Gamma$.
We 
consider the IVP
\begin{eqnarray}
 \dot w(t)\!\! &=&\! L(t,x)(w_t,0,0)+B(t)\langle(z^h_t,h^\theta,h^\xi),
(z^{y}_t,y^\theta,y^\xi)\rangle,\quad \mbox{a.e. } t\in[0,\alpha],\qquad\quad \label{e330}\\
w(t)\!\! &=&\! 0,\qquad t\in[-r,0].\label{e331}
\end{eqnarray}
The \IVP{e330}{e331} is a Carath\'eodory type inhomogeneous linear delay system with time-dependent
but state-independent delays. It is easy to see that under assumptions (A1) (i)--(vi), (A2) (i)--(vii) the
\IVP{e330}{e331} has a unique solution on $[-r,\alpha]$, which will be denoted by $w^{h,y}(t):=w(t,\gamma,h,y)$.
It is easy to see that $\Gamma\times\Gamma\to\setR^n,\ (h,y)\mapsto w(t,\gamma,h,y)$ is a bilinear map for a fixed $t\in[0,\alpha]$ and $\gamma\in P_2$.
In Lemma~\ref{l30} below we will show that this bilinear map is bounded. 
\medskip

We need the further notation
\begin{eqnarray*}
q^{k,h}(s)&:=&z^{k,h}(s)-z^{h}(s)-w^{h,{h_k}}(s),\quad s\in[-r,\alpha].
\end{eqnarray*}

\begin{lemma}\label{l19}
Assume (A2) (i)--(vi) and (H).
Then there exists  $K_{10}\geq0$ such that
\begin{equation}\label{e371}
|A^k(s,z^{j,h}_s,h^\xi)-A(s,z^{j,h}_s,h^\xi)|\leq K_{10}|h|_\Gamma|h_k|_\Gamma,\qquad s\in[0,\alpha],\ k\in\setN,\ j\in\setN_0,
\end{equation} 
and there exists  a sequence $c_{2,k}\geq0$ satisfying $c_{2,k}\to0$ as $k\to\infty$
such that
\begin{equation}\label{e363}
|A^k(s,z^{k,h}_s,h^\xi)-A(s,z^{h}_s,h^\xi)|\leq c_{2,k}|h|_\Gamma,\qquad s\in[0,\alpha],\ k\in\setN.
\end{equation} 
\end{lemma}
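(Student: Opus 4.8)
The plan is to expand both differences directly from the definitions of $A$ and $A^k$, and to bound the resulting terms using the Lipschitz property (A2)~(vi) of $D_2\tau,D_3\tau$ together with the uniform estimates \eref{e25} and \eref{e45}; for the second part I additionally invoke the continuity estimate \eref{e344} established in the proof of Lemma~\ref{l8}.

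For \eref{e371}, first write
$$A^k(s,z^{j,h}_s,h^\xi)-A(s,z^{j,h}_s,h^\xi)
=\bigl[D_2\tau(s,x^k_s,\xi+h^\xi_k)-D_2\tau(s,x_s,\xi)\bigr]z^{j,h}_s
+\bigl[D_3\tau(s,x^k_s,\xi+h^\xi_k)-D_3\tau(s,x_s,\xi)\bigr]h^\xi.$$
By Theorem~\ref{t1}~(ii) the arguments $x^k_s,x_s$ lie in $M_1$ and $\xi,\xi+h^\xi_k$ in $M_4$, so (A2)~(vi) applied with equal first arguments, together with \eref{e25} and $|h^\xi_k|_\Xi\leq|h_k|_\Gamma$, gives
$$|D_i\tau(s,x^k_s,\xi+h^\xi_k)-D_i\tau(s,x_s,\xi)|_{\calL(Z_i,\setR)}\leq L_5\bigl(|x^k_s-x_s|_C+|h^\xi_k|_\Xi\bigr)\leq L_5(L+1)|h_k|_\Gamma$$
for $i=2,3$, uniformly in $s\in[0,\alpha]$ and $k\in\setN$. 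Since $\gamma\in P_2$ and, by (H), $\gamma+h_j\in P_2$ for $j\in\setN$, estimate \eref{e45} yields $|z^{j,h}_s|_C\leq N_1|h|_\Gamma$ for every $j\in\setN_0$; also $|h^\xi|_\Xi\leq|h|_\Gamma$. Combining these bounds establishes \eref{e371} with $K_{10}:=L_5(L+1)(N_1+1)$.

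For \eref{e363}, split
$$A^k(s,z^{k,h}_s,h^\xi)-A(s,z^{h}_s,h^\xi)
=\bigl[A^k(s,z^{k,h}_s,h^\xi)-A(s,z^{k,h}_s,h^\xi)\bigr]+D_2\tau(s,x_s,\xi)\bigl(z^{k,h}_s-z^{h}_s\bigr),$$
where the second term arises because the $D_3\tau(s,x_s,\xi)h^\xi$ contributions in $A(s,z^{k,h}_s,h^\xi)$ and $A(s,z^h_s,h^\xi)$ cancel. The first bracket is bounded by $K_{10}|h|_\Gamma|h_k|_\Gamma$ by the case $j=k$ of \eref{e371}. For the second term, $|D_2\tau(s,x_s,\xi)|_{\calL(C,\setR)}\leq L_2$ and estimate \eref{e344} of Lemma~\ref{l8} give $|z^{k,h}_s-z^h_s|_C\leq c_{1,k}N_1|h|_\Gamma$ (using also $z^{k,h}_s-z^h_s=0$ on $[-r,0]$), where $c_{1,k}\to0$ as $k\to\infty$. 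Hence \eref{e363} holds with $c_{2,k}:=K_{10}|h_k|_\Gamma+L_2N_1c_{1,k}$, which tends to $0$ since $|h_k|_\Gamma\to0$ and $c_{1,k}\to0$. The only point that really needs attention — and the reason the proof goes through as a uniform estimate — is that the constant $N_1$ in \eref{e45} and the null sequence $c_{1,k}$ in \eref{e344} are both independent of $h$, so that $K_{10}$ and $c_{2,k}$ may be chosen uniformly in $h\in\Gamma$; there is otherwise no genuine obstacle. (As in Lemma~\ref{l8}, this argument tacitly uses (A1)~(i)--(iii) as well.)
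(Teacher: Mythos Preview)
Your proof is correct and follows essentially the same approach as the paper: the same decomposition of $A^k-A$ via $D_2\tau,D_3\tau$ for \eref{e371}, and the same splitting into $[A^k-A](s,z^{k,h}_s,h^\xi)+D_2\tau(s,x_s,\xi)(z^{k,h}_s-z^h_s)$ for \eref{e363}, with the same constants $K_{10}=L_5(L+1)(N_1+1)$ and $c_{2,k}=K_{10}|h_k|_\Gamma+L_2N_1c_{1,k}$.
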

\begin{proof}
Let $L_5=L_5(\alpha,M_1,M_3)$ be defined by (A2) (vi).
 To show \eref{e363} we use \eref{e25}, \eref{e45}, \eref{e396} and  (A2) (vi)
to get
\begin{eqnarray*}
 \balra{0cm}{|A^k(s,z^{j,h}_s,h^\xi)-A(s,z^{j,h}_s,h^\xi)|}\nl
&\leq& |D_2\tau(s,x^k_s,\xi+h^\xi_k)z^{j,h}_s-D_2\tau(s,x_s,\xi)z^{j,h}_s|
+|D_3\tau(s,x^k_s,\xi+h^\xi_k)h^\xi-D_3\tau(s,x_s,\xi)h^\xi|\nl
&\leq& L_5(L+1)|h_k|_\Gamma N_1|h|_\Gamma +L_5(L+1)|h_k|_\Gamma |h|_\Gamma,\qquad s\in[0,\alpha],\ k\in\setN,\ j\in\setN_0,
\end{eqnarray*}
which yields \eref{e371}.
Using \eref{e344}, \eref{e363} and (A2) (ii) we get
\begin{eqnarray*}
 \balra{1cm}{|A^k(s,z^{k,h}_s,h^\xi)-A(s,z^{h}_s,h^\xi)|}\nl
&\leq& |A^k(s,z^{k,h}_s,h^\xi)-A(s,z^{k,h}_s,h^\xi)|+|A(s,z^{k,h}_s,h^\xi)-A(s,z^{h}_s,h^\xi)|\nl
&\leq& K_{10}|h|_\Gamma|h_k|_\Gamma+|D_2\tau(s,x_s,\xi)(z^{k,h}_s-z^{h}_s)|\nl
&\leq& K_{10}|h_k|_\Gamma |h|_\Gamma + L_2c_{1,k}N_1|h|_\Gamma,\qquad s\in[0,\alpha],\ k\in\setN,
\end{eqnarray*}
therefore \eref{e363} holds.
\end{proof}
\medskip

\begin{lemma}\label{l14}
 Assume (A1) (i)--(v), (A2) (i)--(vii), (H) and $\gamma\in\calP$.
Then 
\begin{eqnarray}
\balra{1cm}{A^k(s,z^{k,h}_s,h^\xi)-A(s,z^{h}_s,h^\xi)-G(s)\langle(z^{h}_s,h^\xi),(z^{h_k}_s,h^\xi_k)\rangle
-A(s,w^{h,{h_k}}_s,0)}\nl
&=& A(s,q^{k,h}_s,0)+g^{k,h}_{3}(s),\qquad s\in[0,\alpha],\ h\in\Gamma,\ k\in\setN,\hspace{2cm}\label{e336}
\end{eqnarray}
where
\begin{eqnarray*}
g^{k,h}_{3}(s)
&:=&D_{22}\tau(s,x_s,\xi)\langle z^{k,h}_s-z^{h}_s,x^k_s-x_s\rangle
+D_{22}\tau(s,x_s,\xi)\langle z^{h}_s,p^k_s\rangle\nl
&&+D_{23}\tau(s,x_s,\xi)\langle z^{k,h}_s-z^{h}_s,h^\xi_k\rangle
+D_{32}\tau(s,x_s,\xi)\langle h^\xi,p^k_s\rangle\nl
&&+\omega_{D_2\tau}(s,x_s,\xi,x^k_s,\xi+h^\xi_k,z^{k,h}_s)
+\omega_{D_3\tau}(s,x_s,\xi,x^k_s,\xi+h^\xi_k,h^\xi)
\end{eqnarray*}
satisfies
\begin{equation}\label{e337}
\lim_{k\to\infty}\, \sup_{h\neq0\atop h\in\Gamma} \frac1{|h|_\Gamma|h_k|_\Gamma} \int_0^\alpha |g^{k,h}_{3}(s)|\,ds=0;
\end{equation}
and if $h_k\in\Gamma_2$ for $k\in\setN$, then
\begin{eqnarray}
\balra{1cm}{ E^k(s,z^{k,h}_s,h^\xi)-E(s,z^{h}_s,h^\xi)-H(s)\langle(z^{h}_s,h^\xi),(z^{h_k}_s,h^\xi_k)\rangle
-E(s,w^{h,{h_k}}_s,0)}\nl
&=& E(s,q^{k,h}_s,0)+g^{k,h}_{4}(s),\qquad \mbox{a.e. } s\in[0,\alpha],\ h\in\Gamma,\ k\in\setN\qquad\qquad\label{e338}
\end{eqnarray}
with
\begin{eqnarray*}
g^{k,h}_{4}(s)
&:=& -[\dot x^k(u^k(s))-\dot x(u(s))][A^k(s,z^{k,h}_s,h^\xi)-A(s,z^{k,h}_s,h^\xi)]-g^{k}_{2}(s)A(s,z^{k,h}_s,h^\xi)\nl
&&-\dot x(u(s))g^{k,h}_{3}(s)+z^{k,h}(u^k(s))-z^{h}(u^k(s))-[z^{k,h}(u(s))-z^{h}(u(s))]\nl
&&+z^{h}(u^k(s))-z^{h}(u(s))-\dot z^{h}(u(s))(u^k(s)-u(s))\nl
&&+\dot z^{h}(u(s))\Bigl(u^k(s)-u(s)+A(s,z^{h_k}_s,h^\xi_k)\Bigl)
\end{eqnarray*}
satisfying
\begin{equation}\label{e339}
\lim_{k\to\infty}\, \sup_{h\neq0\atop h\in\Gamma_2}\frac1{|h|_{\Gamma_2}|h_k|_{\Gamma_2}} \int_0^\alpha |g^{k,h}_{4}(s)|\,ds=0.
\end{equation}
\end{lemma}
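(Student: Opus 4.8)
The plan is to verify both identities \eref{e336} and \eref{e338} by direct algebraic manipulation of the definitions of $A$, $A^k$, $E$, $E^k$, $G$, $H$, $w^{h,h_k}$, $q^{k,h}$ and the remainder terms $\omega_{D_2\tau}$, $\omega_{D_3\tau}$, $g^k_0$, $g^k_2$, $g^{k,h}_3$, and then to establish the convergence statements \eref{e337} and \eref{e339} term-by-term using the estimates already available in the excerpt. For the first identity, I would expand $A^k(s,z^{k,h}_s,h^\xi)-A(s,z^h_s,h^\xi)$ by first inserting the intermediate quantity $A(s,z^{k,h}_s,h^\xi)$: the difference $A^k(s,z^{k,h}_s,h^\xi)-A(s,z^{k,h}_s,h^\xi)$ is linearized using the bilinear second derivatives $D_{2i}\tau$, $D_{3i}\tau$ evaluated at $\vc v$-like arguments, producing exactly the $G(s)\langle(z^h_s,h^\xi),(z^{h_k}_s,h^\xi_k)\rangle$ term plus the correction terms in $g^{k,h}_3$ (the products $D_{22}\tau\langle z^{k,h}_s-z^h_s,x^k_s-x_s\rangle$ etc. coming from replacing $z^{k,h}_s$ by $z^h_s$ in the bilinear arguments), together with $\omega_{D_2\tau}$ and $\omega_{D_3\tau}$ for the genuinely nonlinear remainder; meanwhile $A(s,z^{k,h}_s,h^\xi)-A(s,z^h_s,h^\xi)=D_2\tau(s,x_s,\xi)(z^{k,h}_s-z^h_s)=A(s,z^{k,h}_s-z^h_s,0)$, and since $z^{k,h}-z^h = w^{h,h_k}+q^{k,h}$ by definition of $q^{k,h}$, this splits as $A(s,w^{h,h_k}_s,0)+A(s,q^{k,h}_s,0)$. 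Collecting terms gives \eref{e336} with the displayed $g^{k,h}_3$.

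For \eref{e337}, I would bound $\int_0^\alpha|g^{k,h}_3(s)|\,ds$ by summing the six contributions. The first four are estimated using $|D_{ij}\tau|_{\calL^2}\leq L_5$ (from (A2)(vi)), $|z^{k,h}_s-z^h_s|_C\leq c_{1,k}N_1|h|_\Gamma$ (from \eref{e344}), $|z^h_s|_C\leq N_1|h|_\Gamma$ (from \eref{e45}), $|x^k_s-x_s|_C\leq L|h_k|_\Gamma$ and $|h^\xi_k|_\Xi\leq|h_k|_\Gamma$ (from \eref{e25}), and $\max_s|p^k_s|_C=o(|h_k|_\Gamma)$ (from \eref{e335}); each of these, divided by $|h|_\Gamma|h_k|_\Gamma$, tends to $0$ uniformly in $h\neq0$ because either $c_{1,k}\to0$ or $|p^k_s|_C/|h_k|_\Gamma\to0$. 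The last two contributions are handled directly by Lemma~\ref{l17}, which gives precisely $\sup_{h\neq0}\frac1{|h|_\Gamma|h_k|_\Gamma}\int_0^\alpha|\omega_{D_2\tau}(\cdots)|\,ds\to0$ and the analogous statement for $\omega_{D_3\tau}$.

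For the second identity \eref{e338} I would write $E^k(s,z^{k,h}_s,h^\xi)-E(s,z^h_s,h^\xi)$ using the definitions of $E^k$ and $E$, insert intermediate terms to isolate $-[\dot x^k(u^k(s))-\dot x(u(s))]A^k(s,z^{k,h}_s,h^\xi)$ and use part (iii) of Lemma~\ref{l13} (which requires $h_k\in\Gamma_2$) to replace $\dot x^k(u^k(s))-\dot x(u(s))$ by $F(s,z^{h_k}_s,h^\xi_k)+g^k_2(s)$; combine with \eref{e336} for the $A$-part and with the difference $z^{k,h}(u^k(s))-z^h(u(s))$, which I decompose as $[z^{k,h}(u^k(s))-z^h(u^k(s))]-[z^{k,h}(u(s))-z^h(u(s))]+[z^{k,h}(u(s))-z^h(u(s))]+[z^h(u^k(s))-z^h(u(s))]$ and further into its linearization $\dot z^h(u(s))(u^k(s)-u(s))$ plus a $\dot z^h(u(s))$-correction involving $u^k(s)-u(s)+A(s,z^{h_k}_s,h^\xi_k)=g^k_0(s)$. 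Bookkeeping then yields the bilinear term $H(s)\langle(z^h_s,h^\xi),(z^{h_k}_s,h^\xi_k)\rangle$, the term $E(s,w^{h,h_k}_s,0)+E(s,q^{k,h}_s,0)$, and the displayed $g^{k,h}_4$. The convergence \eref{e339} follows by estimating each piece of $g^{k,h}_4$: the first uses \eref{e371} of Lemma~\ref{l19} together with $|\dot x^k(u^k(s))-\dot x(u(s))|$ being bounded (Lemma~\ref{l9} and Lemma~\ref{l3}); the second uses part (iii) of Lemma~\ref{l13}, i.e. $\int_0^\alpha|g^k_2(s)|\,ds=o(|h_k|_{\Gamma_2})$, times the bound $|A(s,z^{k,h}_s,h^\xi)|\leq K_6|h|_\Gamma$ from \eref{e384}; the third uses \eref{e337} just proved; the fourth uses \eref{e376} of Lemma~\ref{l20}; the fifth uses Lemma~\ref{l23}; and the sixth uses $|\dot z^h(u(s))|\leq N_2|h|_\Gamma$ together with \eref{e374}. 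The main obstacle is purely organizational: keeping the long chain of inserted intermediate terms consistent so that the bilinear operators $G$, $H$, $B$ and the operator $L(t,x)$ reassemble exactly, and ensuring the correct norm ($\Gamma$ versus $\Gamma_2$) is tracked through each term so that the suprema over $h$ are genuinely uniform — the analytic estimates themselves are all immediate consequences of results already in hand.
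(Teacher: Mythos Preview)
Your proposal is correct and follows essentially the same route as the paper's proof: the same intermediate-term insertions, the same splitting $z^{k,h}-z^h=w^{h,h_k}+q^{k,h}$, and the same term-by-term estimation of $g^{k,h}_3$ and $g^{k,h}_4$ using Lemmas~\ref{l17}, \ref{l13}, \ref{l19}, \ref{l20}, \ref{l23} and relations \eref{e335}, \eref{e344}, \eref{e374}, \eref{e384}.

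One small point needs tightening. For the first term of $g^{k,h}_4$ you write that $|\dot x^k(u^k(s))-\dot x(u(s))|$ is merely \emph{bounded}. Boundedness alone is not enough: combined with \eref{e371}, which gives $|A^k-A|\leq K_{10}|h|_\Gamma|h_k|_\Gamma$, a uniform bound would leave you with a constant after dividing by $|h|_{\Gamma_2}|h_k|_{\Gamma_2}$. What you actually need (and what the paper uses, as \eref{e409}) is the quantitative estimate
\[
|\dot x^k(u^k(s))-\dot x(u(s))|\leq |\dot x^k(u^k(s))-\dot x(u^k(s))|+|\dot x(u^k(s))-\dot x(u(s))|\leq (L+K_4K_0)|h_k|_\Gamma,
\]
obtained from \eref{e25}, Lemma~\ref{l9} (via \eref{e382b}, which uses $\gamma\in\calP$), and \eref{e88}. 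With this, the first term contributes $O(|h|_\Gamma|h_k|_\Gamma^2)$ and the limit \eref{e339} follows. The ingredients you cite (Lemma~\ref{l9}) are the right ones; only the conclusion drawn from them needs to be sharpened from ``bounded'' to ``$O(|h_k|_\Gamma)$''.
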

\begin{proof}
The definitions of $A^k,A,G,g^{k,h}_{3},\omega_{D_2\tau}$, $\omega_{D_3\tau}$ and relation
$$
A(s,z^{k,h}_s,h^\xi)-A(s,z^{h}_s,h^\xi)-A(s,w^{h,{h_k}}_s,0)=A(s,z^{k,h}_s-z^{h}_s-w^{h,{h_k}}_s,0)
$$ 
yield
\begin{eqnarray*}
\balra{1cm}{ A^k(s,z^{k,h}_s,h^\xi)-A(s,z^{h}_s,h^\xi)-G(s)\langle(z^{h}_s,h^\xi),(z^{h_k}_s,h^\xi_k)\rangle
-A(s,w^{h,{h_k}}_s,0)}\nl
&=&A^k(s,z^{k,h}_s,h^\xi)-A(s,z^{k,h}_s,h^\xi)-G(s)\langle(z^{h}_s,h^\xi),(z^{h_k}_s,h^\xi_k)\rangle
+A(s,q^{k,h}_s,0)\nl
&=& D_2\tau(s,x^k_s,\xi+h^\xi_k)z^{k,h}_s-D_2\tau(s,x_s,\xi)z^{k,h}_s-D_{22}\tau(s,x_s,\xi)\langle z^{k,h}_s,x^k_s-x_s\rangle\nl
&&-D_{23}\tau(s,x_s,\xi)\langle z^{k,h}_s,h^\xi_k\rangle+D_{22}\tau(s,x_s,\xi)\langle z^{k,h}_s-z^{h}_s,x^k_s-x_s\rangle\nl
&&+D_{22}\tau(s,x_s,\xi)\langle z^{h}_s,p^k_s\rangle+D_{23}\tau(s,x_s,\xi)\langle z^{k,h}_s-z^{h}_s,h^\xi_k\rangle\nl
&&+D_3\tau(t,x^k_s,\xi+h^\xi_k)h^\xi-D_3\tau(s,x_s,\xi)h^\xi-D_{32}\tau(s,x_s,\xi)\langle h^\xi,x^k_s-x_s\rangle\nl
&&-D_{33}\tau(s,x_s,\xi)\langle h^\xi,h^\xi_k\rangle+D_{32}\tau(s,x_s,\xi)\langle h^\xi,p^k_s\rangle
+A(s,q^{k,h}_s,0)\nl
&=& A(s,q^{k,h}_s,0)+g^{k,h}_{3}(s).
\end{eqnarray*}
Let $L_5=L_5(\alpha,M_1,M_3)$ be defined by (A2) (vi). Then
we have by \eref{e25}, \eref{e45} and \eref{e344}
\begin{eqnarray*}
 \int_0^\alpha |g^{k,h}_{3}(s)|\,ds
&\leq& \alpha L_5c_{1,k}N_1| h|_\Gamma L|h_k|_\Gamma+\alpha L_5N_1|h|_\Gamma\max_{s\in[0,\alpha]}|p^k_s|_C
+\alpha L_5c_{1,k}N_1| h|_\Gamma|h_k|_\Gamma\nl
&&+\alpha L_5| h|_\Gamma\max_{s\in[0,\alpha]}|p^k_s|_C
+\int_0^\alpha |\omega_{D_2\tau}(s,x_s,\xi,x^k_s,\xi+h^\xi_k,z^{k,h}_s)|\,ds\nl
&&+\int_0^\alpha |\omega_{D_3\tau}(s,x_s,\xi,x^k_s,\xi+h^\xi_k,h^\xi)|\,ds.
\end{eqnarray*}
Hence  $c_{1,k}\to0$ as $k\to\infty$, \eref{e335}, \eref{e356} and \eref{e357} imply \eref{e337}.

Relation
$$
E(s,z^{k,h}_s,h^\xi)-E(s,z^{h}_s,h^\xi)-E(s,w^{h,{h_k}}_s,0)=E(s,z^{k,h}_s-z^{h}_s-w^{h,{h_k}}_s,0)
$$
and the definition of $E,E^k$ and $H$ give 
\begin{eqnarray*}
 \balra{0.5cm}{ E^k(s,z^{k,h}_s,h^\xi)-E(s,z^{h}_s,h^\xi)-H(s)\langle(z^{h}_s,h^\xi),(z^{h_k}_s,h^\xi_k)\rangle
-E(s,w^{h,{h_k}}_s,0)}\nl
&=& E^k(s,z^{k,h}_s,h^\xi)-E(s,z^{k,h}_s,h^\xi)-H(s)\langle(z^{h}_s,h^\xi),(z^{h_k}_s,h^\xi_k)\rangle
+E(s,q^{k,h}_s,0)\nl
&=& -\dot x^k(u^k(s))A^k(s,z^{k,h}_s,h^\xi)+ \dot x(u(s))A(s,z^{k,h}_s,h^\xi)+z^{k,h}(u^k(s))-z^{k,h}(u(s))\nl
&&+A(s,z^{h}_s,h^\xi)F(s,z^{h_k}_s,h^\xi_k)+\dot x(u(s))
G(s)\langle(z^{h}_s,h^\xi),(z^{h_k}_s,h^\xi_k)\rangle\nl
&&+\dot z^{h}(u(s))A(s,z^{h_k}_s,h^\xi_k)-E(s,q^{k,h}_s,0)\nl
&=& -[\dot x^k(u^k(s))-\dot x(u(s))][A^k(s,z^{k,h}_s,h^\xi)-A(s,z^{k,h}_s,h^\xi)]\nl
&&-[\dot x^k(u^k(s))-\dot x(u(s))-F(s,z^{h_k}_s,h^\xi_k)]A(s,z^{k,h}_s,h^\xi)\nl
&&-\dot x(u(s))\Bigl[A^k(s,z^{k,h}_s,h^\xi)-A(s,z^{k,h}_s,h^\xi)-G(s)\langle(z^{h}_s,h^\xi),(z^{h_k}_s,h^\xi_k)\rangle\Bigr]\nl
&&+z^{k,h}(u^k(s))-z^{h}(u^k(s))-[z^{k,h}(u(s))-z^{h}(u(s))]\nl
&&+z^{h}(u^k(s))-z^{h}(u(s))-\dot z^{h}(u(s))(u^k(s)-u(s))\nl
&&+\dot z^{h}(u(s))\Bigl(u^k(s)-u(s)+A(s,z^{h_k}_s,h^\xi_k)\Bigl)+E(s,q^{k,h}_s,0),
\end{eqnarray*}
which, together with \eref{e305} and \eref{e336}, yields \eref{e338}.

To prove \eref{e339} first note that by \eref{e25}, \eref{e88} and  \eref{e382b}
\begin{eqnarray}
|\dot x^k(u^k(s))-\dot x(u(s))|
&\leq& |\dot x^k(u^k(s))-\dot x(u^k(s))|+|\dot x(u^k(s))-\dot x(u(s))|\nl
&\leq&  L|h_k|_\Gamma+ K_4K_0|h_k|_\Gamma.\label{e409}
\end{eqnarray}
Hence \eref{e371} and \eref{e409} give
$$
\lim_{k\to\infty}\sup_{h\neq0\atop h\in\Gamma} \frac{1}{|h|_\Gamma|h_k|_\Gamma}\int_0^\alpha|\dot x^k(u^k(s))-\dot x(u(s))||A^k(s,z^{k,h}_s,h^\xi)-A(s,z^{k,h}_s,h^\xi)|\,ds
=0.
$$
Relations\eref{e37},  \eref{e384}, \eref{e306} and \eref{e337} imply for $h_k\in\Gamma_2$ for $k\in\setN$
$$
\lim_{k\to\infty}\sup_{h\neq0\atop h\in\Gamma} \frac{1}{|h|_\Gamma|h_k|_{\Gamma_2}}\int_0^\alpha
|g^{k}_{2}(s)A(s,z^{k,h}_s,h^\xi)|\,ds
\leq \lim_{k\to\infty}\frac{K_6}{|h_k|_{\Gamma_2}}\int_0^\alpha |g^{k}_{2}(s)|\,ds
=0
$$
and
$$
\lim_{k\to\infty}\sup_{h\neq0\atop h\in\Gamma} \frac{1}{|h|_\Gamma|h_k|_\Gamma}\int_0^\alpha
|\dot x(u(s))g^{k,h}_{3}(s)|\,ds
\leq \lim_{k\to\infty}\frac{N}{|h|_\Gamma|h_k|_\Gamma}\int_0^\alpha |g^{k,h}_{3}(s)|\,ds
=0.
$$
The above limits and \eref{e372}, \eref{e376}, $|\dot z^{h}(u(s))|\leq N_2|h|_{\Gamma_2}$ and
\eref{e374}
yield \eref{e339}.

\end{proof}
\medskip

\begin{lemma}\label{l16}
Assume (A2) (i)--(vii), (H) and $\gamma\in\calP$.
Then there exist $K_{11}=K_{11}(\gamma)\geq0$ and a nonnegative sequence $c_{3,k}=c_{3,k}(\gamma)$   satisfying 
$c_{3,k}\to0$   as $k\to\infty$
such that
\begin{eqnarray}
|F(s,z^{h}_s,h^\xi)|&\leq& K_{11}|h|_\Gamma,\qquad  
\mbox{a.e. } s\in[0,\alpha],\ h\in\Gamma,\qquad\label{e398}\\
|E^k(s,z^{k,h}_s,h^\xi)-E(s,z^{h}_s,h^\xi)|
&\leq& c_{3,k}|h|_\Gamma,\qquad  
\mbox{a.e. } s\in[0,\alpha],\ k\in\setN,\label{e347}
\end{eqnarray} 
and, if in addition, (A2) (viii) holds, there exists a nonnegative sequence  $c_{4,k}=c_{4,k}(\gamma)$  satisfying 
 $c_{4,k}\to0$  as $k\to\infty$
such that
\begin{equation}\label{e399}
\int_0^\alpha |F^k(s,z^{k,h}_s,h^\xi)-F(s,z^{h}_s,h^\xi)|\,ds\ \leq\ c_{4,k}|h|_{\Gamma_2},\qquad  
\mbox{a.e. } s\in[0,\alpha],\ k\in\setN,\ h\in\Gamma_2.
\end{equation} 
\end{lemma}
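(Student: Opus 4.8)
The plan is to prove the three estimates in increasing order of difficulty, in each case by expanding the definitions of $E$, $E^k$, $F$, $F^k$ and using the bounds already established (especially $\eref{e37}$, $\eref{e94}$, $\eref{e382}$, $\eref{e382b}$, $\eref{e384}$, $\eref{e388}$, $\eref{e389a}$) together with the convergence lemmas (Lemma~\ref{l9} through Lemma~\ref{l14}, and the composition/convergence Lemmas~\ref{l3}, \ref{l4}, \ref{l21}). First, for $\eref{e398}$: by definition $F(s,z^h_s,h^\xi)=-\ddot x(u(s))A(s,z^h_s,h^\xi)+\dot h^\phi(-\tau(s,x_s,\xi))$. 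Since $\gamma\in\calP$ and $\phi\in\Www$, Lemma~\ref{l9} gives $x\in\Www([-r,\alpha],\setR^n)$ with $|\ddot x|_\Lw\leq K_4$, and then Lemma~\ref{l3} yields $|\ddot x(u(s))|\leq K_4$ for a.e.\ $s\in[0,\alpha]$ because $u\in{\cal PM}([0,\alpha],[-r,\alpha])$. Combining this with $\eref{e384}$ ($|A(s,z^h_s,h^\xi)|\leq K_6|h|_\Gamma$) and $|\dot h^\phi(-\tau(s,x_s,\xi))|\leq |h^\phi|_{\Www}\leq|h|_\Gamma$ (only the $C^1$ part of the $\Www$-norm is needed here, so the bound is in terms of $|h|_\Gamma$) gives $\eref{e398}$ with $K_{11}:=K_4K_6+1$.

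Next, for $\eref{e347}$: write
$$
E^k(s,z^{k,h}_s,h^\xi)-E(s,z^h_s,h^\xi)=-\dot x^k(u^k(s))A^k(s,z^{k,h}_s,h^\xi)+\dot x(u(s))A(s,z^h_s,h^\xi)+z^{k,h}(u^k(s))-z^h(u(s)),
$$
and split each difference into a "coefficient part'' and an "argument part'': for the first term use $\eref{e409}$ ($|\dot x^k(u^k(s))-\dot x(u(s))|\leq(L+K_4K_0)|h_k|_\Gamma$) and $\eref{e363}$ ($|A^k(s,z^{k,h}_s,h^\xi)-A(s,z^h_s,h^\xi)|\leq c_{2,k}|h|_\Gamma$) together with the uniform bounds $|\dot x(u(s))|\leq N$ and $|A(s,z^h_s,h^\xi)|\leq K_6|h|_\Gamma$; for the last term use $|z^{k,h}(u^k(s))-z^h(u(s))|\leq|z^{k,h}(u^k(s))-z^h(u^k(s))|+|z^h(u^k(s))-z^h(u(s))|$, bounding the first by $\eref{e344}$ ($\leq c_{1,k}N_1|h|_\Gamma$) and the second by $\eref{e94}$ and $\eref{e88}$ ($\leq N_2|h|_\Gamma K_0|h_k|_\Gamma$). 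Collecting, $\eref{e347}$ holds with $c_{3,k}:=Nc_{2,k}+(L+K_4K_0)K_6|h_k|_\Gamma+c_{1,k}N_1+N_2K_0|h_k|_\Gamma$, which tends to $0$ since $c_{1,k}\to0$ and $c_{2,k}\to0$ by Lemmas~\ref{l8} and \ref{l19}.

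The main obstacle is $\eref{e399}$, the $F^k$ estimate, because it involves the \emph{second} derivative $\ddot x^k$ of the perturbed solution evaluated at the perturbed argument $u^k(s)$, and $\ddot x^k$ is only $\Lw$ (not continuous), so pointwise estimates are not available and we must work under the integral sign. Expanding,
$$
F^k(s,z^{k,h}_s,h^\xi)-F(s,z^h_s,h^\xi)=-\ddot x^k(u^k(s))A^k(s,z^{k,h}_s,h^\xi)+\ddot x(u(s))A(s,z^h_s,h^\xi)+\dot h^\phi(-\tau(s,x^k_s,\xi+h^\xi_k))-\dot h^\phi(-\tau(s,x_s,\xi)).
$$
For the $\dot h^\phi$ difference, since $h\in\Gamma_2$ means $\dot h^\phi$ is uniformly continuous, and $|\tau(s,x^k_s,\xi+h^\xi_k)-\tau(s,x_s,\xi)|\leq L_2(L+1)|h_k|_\Gamma\to0$ by (A2)(ii) and $\eref{e25}$, this term is $\leq o(1)|h^\phi|_{\Www}\leq c|h|_{\Gamma_2}$ with $c\to0$; one must pass $s$-uniformly, but the modulus of continuity of $\dot h^\phi$ depends on $h$, so instead bound $|\dot h^\phi(b)-\dot h^\phi(a)|\leq|\ddot h^\phi|_\Lw|b-a|\leq|h|_{\Gamma_2}L_2(L+1)|h_k|_\Gamma$ using Lemma~\ref{ll12b} — this is exactly where the $\Www$-norm rather than $\Ww$-norm on $h$ is essential. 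For the leading term $-\ddot x^k(u^k(s))A^k(s,z^{k,h}_s,h^\xi)+\ddot x(u(s))A(s,z^h_s,h^\xi)$, split as
$$
-[\ddot x^k(u^k(s))-\ddot x(u^k(s))]A^k(s,z^{k,h}_s,h^\xi)-[\ddot x(u^k(s))-\ddot x(u(s))]A^k(s,z^{k,h}_s,h^\xi)-\ddot x(u(s))[A^k(s,z^{k,h}_s,h^\xi)-A(s,z^h_s,h^\xi)].
$$
The third piece is controlled by $|\ddot x(u(s))|\leq K_4$ (a.e., via Lemma~\ref{l3}) and $\eref{e363}$, giving $\leq K_4c_{2,k}|h|_\Gamma$. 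The second piece integrates to $o(1)|h|_\Gamma$ by Lemma~\ref{l4} applied to $g=\ddot x$ (using $\eref{e332}$, i.e.\ $|u^k-u|_{\Ww}\to0$) together with the uniform bound $|A^k(s,z^{k,h}_s,h^\xi)|\leq$ const$\cdot|h|_\Gamma$ from $\eref{e371}$ and $\eref{e384}$. The first piece is the crux: $\ddot x^k(t)-\ddot x(t)=\frac{d}{dt}f(\vc v^k(t))-\frac{d}{dt}f(\vc v(t))$ for a.e.\ $t\in[0,\alpha]$, and assumption (A2)(viii) gives $|\frac{d}{dt}f(t,x^k_t,x^k(t-\tau(t,x^k_t,\xi+h^\xi_k)),\theta)-\frac{d}{dt}f(t,x_t,x(t-\tau(t,x_t,\xi)),\theta)|\leq L_6(|x^k_t-x_t|_\Ww+|h^\xi_k|_\Xi+|h^\theta_k|_\Xi)\leq L_6(L+2)|h_k|_\Gamma$ for a.e.\ $t\in[0,\alpha]$, so $|\ddot x^k(t)-\ddot x(t)|\leq L_6(L+2)|h_k|_\Gamma=:A_k\to0$ pointwise a.e.\ in $t$; combined with the uniform bound (the $|\ddot x^k|,|\ddot x|$ are each $\leq K_4$-type bounds, and $\ddot x^k-\ddot x$ is bounded), Lemma~\ref{l21} with $f^{k,h}(t):=|\ddot x^k(t)-\ddot x(t)|$ (independent of $h$, $H$ a singleton) and $u,u^k\in{\cal PM}$ with $\eref{e332}$ shows $\int_0^\alpha|\ddot x^k(u^k(s))-\ddot x(u^k(s))|\,ds\to0$; multiplying by the uniform bound $|A^k(s,z^{k,h}_s,h^\xi)|\leq$ const$\cdot|h|_\Gamma$ and noting $|h|_\Gamma\leq|h|_{\Gamma_2}$ finishes the first piece. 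Collecting all contributions gives $\eref{e399}$ with $c_{4,k}$ the sum of the vanishing quantities above.
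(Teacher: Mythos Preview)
Your treatment of \eref{e347} and the $\ddot x$-part of \eref{e399} matches the paper's argument closely. However, there is a genuine error in how you expand $F(s,z^h_s,h^\xi)$ and $F^k(s,z^{k,h}_s,h^\xi)$. In the definition
\[
F(s,h^\phi,h^\xi)=-\ddot x(u(s))A(s,h^\phi,h^\xi)+\dot h^\phi(-\tau(s,x_s,\xi)),
\]
the symbol $h^\phi$ is a formal slot. When you substitute $z^h_s$ into that slot, the last term becomes $\frac{d}{d\zeta}(z^h_s)(\zeta)\big|_{\zeta=-\tau(s,x_s,\xi)}=\dot z^h(u(s))$, \emph{not} $\dot h^\phi(-\tau(s,x_s,\xi))$. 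Likewise $F^k(s,z^{k,h}_s,h^\xi)$ has last term $\dot z^{k,h}(u^k(s))$. So in \eref{e398} the second term is $\dot z^h(u(s))$, bounded via \eref{e94} (the paper's constant $K_{11}=K_4K_6+1$ appears to be a minor slip; $K_4K_6+N_2$ is what the argument gives). This is a cosmetic repair.

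The consequential gap is in \eref{e399}: the ``$\dot h^\phi$ difference'' you analyze does not appear; the actual remainder is $\dot z^{k,h}(u^k(s))-\dot z^h(u(s))$ (or, in the version the paper writes and later uses, $\dot z^{h}(u^k(s))-\dot z^h(u(s))$). Your Lipschitz bound on $\dot h^\phi$ via $\ddot h^\phi$ is therefore irrelevant. The paper controls $\int_0^\alpha|\dot z^h(u^k(s))-\dot z^h(u(s))|\,ds$ by invoking \eref{e379} (Lemma~\ref{l25b}), which in turn rests on the piecewise-Lipschitz estimate \eref{e386} for $\dot z^h$ from Lemma~\ref{l25}. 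That estimate is precisely where the requirement $h\in\Gamma_2$ enters: one needs $h^\phi\in\Www$ so that $\dot z^h$ is Lipschitz on $[-r,0)$, and $\gamma\in\calP$ so that it is Lipschitz on $(0,\alpha]$. So your instinct that the $\Www$-norm on $h^\phi$ is essential here is right, but the mechanism is through $\dot z^h$ (via \eref{e386} and \eref{e379}), not through $\dot h^\phi$ directly.
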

\begin{proof}
The definition of $F$, \eref{e382} and \eref{e384} imply immediately \eref{e398} with $K_{11}:=K_4K_6+1$.

Relations \eref{e37}, \eref{e25}, \eref{e88}, \eref{e45}, \eref{e94}, \eref{e344}, 
 \eref{e384}, \eref{e363}, \eref{e409} and (H2) (ii) yield for a.e.\ $s\in[0,\alpha]$
\begin{eqnarray*}
 \balra{1.cm}{|E^k(s,z^{k,h}_s,h^\xi)-E(s,z^{h}_s,h^\xi)|}\nl
&\leq&|\dot x^k(u^k(s))-\dot x(u(s))||A^k(s,z^{k,h}_s,h^\xi)|\nl
&&+|\dot x(u(s))|\Bigl|A^k(s,z^{k,h}_s,h^\xi)-A(s,z^{h}_s,h^\xi)\Bigr|+|z^{k,h}(u^k(s))-z^{h}(u^k(s))|\nl
&& +|z^{h}(u^k(s))-z^{h}(u(s))|\nl
&\leq& (L+K_4K_0)|h_k|_\Gamma K_6|h|_\Gamma
+Nc_{2,k}|h|_\Gamma+c_{1,k}N_1|h|_\Gamma+N_2|h|_\Gamma K_0|h_k|_\Gamma,
\end{eqnarray*}
which proves \eref{e347}.

\begin{eqnarray*}
 \balra{1.cm}{|F^k(s,z^{h}_s,h^\xi)-F(s,z^{h}_s,h^\xi)|}\nl
&\leq&\Bigl(|\ddot x^k(u^k(s))-\ddot x(u^k(s))|+|\ddot x(u^k(s))-\ddot x(u(s))|\Bigr)|A^k(s,z^h_s,h^\xi)|\nl
&&+|\ddot x(u(s))|\Bigl|A^k(s,z^h_s,h^\xi)-A^k(s,z^h_s,h^\xi)\Bigr|+|\dot z^{h}(u^k(s))-\dot z^{h}(u(s))|.
\end{eqnarray*}
For $t\in(0,\alpha]$ we have by (A2) (viii) that
\begin{eqnarray*}
|\ddot x^k(t)-\ddot x(t)|
&=& \Bigl|\frac d{dt} f(t,x^k_t,x^k(u^k(t)),\theta+h^\theta_k)-\frac d{dt} f(t,x_t,x(u(t)),\theta)\Bigr|\nl
&\leq& L_6 (|x^k_t-x_t|_C+|h^\theta_k|_\Theta+|h^\xi_k|_\Xi)\nl
&\leq& L_6(L+1)|h_k|_\Gamma.
\end{eqnarray*}
For $t\in[-r,0)$ and $h\in\Gamma_2$ we get
$$
|\ddot x^k(t)-\ddot x(t)|=|\ddot h^\phi_k(t)|\leq |h_k|_{\Gamma_2}.
$$
Using that $\ddot x\in\Lw([-r,\alpha],\setR^n)$, similarly to \eref{e400} we can argue that
$$
\lim_{k\to\infty}\int_0^\alpha |\ddot x(u^k(s))-\ddot x(u(s))|\,ds=0.
$$
Then the above relations, $|\ddot x(u(s))|\leq K_4$ for a.e.\ $s\in[0,\alpha]$, \eref{e384}, \eref{e379} and \eref{e371} yield \eref{e399}.
\end{proof}
\medskip

For a.e.\ $s\in[0,\alpha]$, $h,y\in\Gamma$  and $k\in\setN$ we introduce
the bilinear operators by
\begin{eqnarray*}
G^k(s)\langle (h^\phi,h^\xi),(y^\phi,y^\xi)\rangle
&:=&
D_{22}\tau(s,x^k_s,\xi+h^\xi_k)\langle h^\phi, y^\phi\rangle+D_{23}\tau(s,x^k_s,\xi+h^\xi_k)\langle h^\phi, y^\xi\rangle \nl
&&+D_{32}\tau(s,x^k_s,\xi+h^\xi_k)\langle h^\xi, y^\phi\rangle+D_{33}\tau(s,x^k_s,\xi+h^\xi_k)\langle h^\xi, y^\xi\rangle,\nl
H^k(s)\langle(h^\phi,h^\xi),(y^\phi,y^\xi)\rangle
&:=&-A^k(s,h^\phi,h^\xi)F^k(s,y^\phi,y^\xi)\nl
&&-\dot x^k(u^k(s))G^k(s)\langle(h^\phi,h^\xi),(y^\phi,y^\xi)\rangle\nl
&&-\dot h^\phi(-\tau(s,x^k_s,\xi+h^\xi_k))A^k(s,y^\phi,y^\xi),
\end{eqnarray*}
and 
\begin{eqnarray*}
B^k(s)\langle h,y\rangle\!\!
&:=& D_{22}f(\vc v^k(s))\langle h^\phi,y^\phi\rangle 
+D_{23}f(\vc v^k(s))\langle h^\phi,E^k(s,y^\phi,y^\xi)\rangle \nl
&&+D_{24}f(\vc v^k(s))\langle h^\phi,y^\theta\rangle 
+D_{32}f(\vc v^k(s))\langle E^k(s,h^\phi,h^\xi),y^\phi\rangle\nl 
&&+D_{33}f(\vc v^k(s))\langle E^k(s,h^\phi,h^\xi),E^k(s,y^\phi,y^\xi)\rangle \nl
&&+D_{34}f(\vc v^k(s))\langle E^k(s,h^\phi,h^\xi),y^\theta\rangle +D_{42}f(\vc v^k(s))\langle h^\theta,y^\phi\rangle\nl
&& 
+D_{43}f(\vc v^k(s))\langle h^\theta,E^k(s,y^\phi,y^\xi)\rangle 
+D_{44}f(\vc v^k(s))\langle h^\theta,y^\theta\rangle\nl
&&+D_3f(\vc v^k(s))H^k(s)\langle(h^\phi,h^\xi),(y^\phi,y^\xi)\rangle.
\end{eqnarray*}
\medskip

\begin{lemma}\label{l31}
 Assume (A1) (i)--(vi), (A2) (i)--(vii). Then for every $\gamma\in P_2$ 
there exists $K_{12}=K_{12}(\gamma)\geq0$ such that 
\begin{equation}
 |B(s)\langle (z^h_s,h^\theta,h^\xi),(z^y_s,y^\theta,y^\xi)\rangle|
\leq K_{12} |h|_\Gamma|y|_\Gamma,\quad \mbox{a.e. } s\in[-r,\alpha],\ h,y\in\Gamma,\ \gamma\in P_2.\ \label{e393}
\end{equation}
If in addition (A2) (viii) holds, then
 for every $\gamma\in P_2\cap\calP$ there exists a nonnegative sequence $c_{5,k}=c_{5,k}(\gamma)$ 
such that $c_{5,k}\to0$ as $k\to\infty$, and
\begin{equation}
 \int_0^\alpha \Bigl|B^k(s)\langle (z^h_s,h^\theta,h^\xi),(z^y_s,y^\theta,y^\xi)\rangle
-B(s)\langle (z^h_s,h^\theta,h^\xi),(z^y_s,y^\theta,y^\xi)\rangle\Bigr|\,ds
\leq c_{5,k} |h|_{\Gamma_2}|y|_{\Gamma_2},\quad \label{e401}
\end{equation}
for  $h,y\in\Gamma_2$.
\end{lemma}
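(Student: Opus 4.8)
The plan is to prove both inequalities by expanding $B(s)$, respectively $B^k(s)-B(s)$, into their constituent summands and estimating each summand with the bounds already assembled in Section~\ref{sec_DDE_wp} and in the preceding lemmas of the present section. For \eref{e393}, first observe that by (A1) (v), (vi) the second partials $D_{ij}f$ ($i,j\in\{2,3,4\}$) exist, are continuous, and satisfy $|D_{ij}f(\vc v(s))|_{\calL^2}\leq L_3$ for a.e.\ $s\in[0,\alpha]$ (the Lipschitz estimate for $D_if$ bounds the operator norm of $D_{ij}f$, and $\vc v(s)=(s,x_s,x(u(s)),\theta)$ has $x_s\in M_1$, $x(u(s))\in M_2$, $\theta\in M_3$); similarly $|D_{ij}\tau(s,x_s,\xi)|_{\calL^2}\leq L_5$ by (A2) (vi), (vii). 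Then each of the nine $D_{ij}f$-terms in $B(s)\langle(z^h_s,h^\theta,h^\xi),(z^y_s,y^\theta,y^\xi)\rangle$ is at most $L_3$ times the product of the norms of its two bilinear arguments, which are controlled by $|z^h_s|_C\leq N_1|h|_\Gamma$ \eref{e45}, $|h^\theta|_\Theta,|h^\xi|_\Xi\leq|h|_\Gamma$, and $|E(s,z^h_s,h^\xi)|\leq K_8|h|_\Gamma$ \eref{e389a}. For the remaining term one uses $|D_3f(\vc v(s))H(s)\langle\cdot,\cdot\rangle|\leq L_1|H(s)\langle\cdot,\cdot\rangle|$ together with $|A(s,z^h_s,h^\xi)|\leq K_6|h|_\Gamma$ \eref{e384}, $|F(s,z^y_s,y^\xi)|\leq K_{11}|y|_\Gamma$ \eref{e398}, $|\dot x(u(s))|\leq N$ and $|\dot z^h(u(s))|\leq N_2|h|_\Gamma$ (from \eref{e37}, \eref{e94} and Lemma~\ref{l3}, since $u\in{\cal PM}$), and the analogous estimate $|G(s)\langle(z^h_s,h^\xi),(z^y_s,y^\xi)\rangle|\leq L_5(N_1+1)^2|h|_\Gamma|y|_\Gamma$. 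Summing gives \eref{e393} with an explicit $K_{12}$ formed from $L_1,L_3,L_5,N,N_1,N_2,K_6,K_8,K_{11}$.

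For \eref{e401} I would write $B^k(s)\langle\cdot,\cdot\rangle-B(s)\langle\cdot,\cdot\rangle$ as a telescoping sum, replacing $D_{ij}f(\vc v^k(s))$ by $D_{ij}f(\vc v(s))$ and $E^k$ by $E$ one factor at a time. This produces pieces of three types. First, $[D_{ij}f(\vc v^k(s))-D_{ij}f(\vc v(s))]\langle\cdot,\cdot\rangle$: the scalar factor tends to $0$ as $k\to\infty$ uniformly in $s$, since $D_{ij}f$ is uniformly continuous on the compact subset $\overline{\{\vc v^k(s),\vc v(s)\}}$ of $[0,\alpha]\times\Omega_1\times\Omega_2\times\Omega_3$ --- here $\{x_s\}\cup\{x^k_s:k\in\setN\}$ is relatively compact in $C$ by Arzel\`a--Ascoli, being uniformly $N$-Lipschitz by \eref{e37} and contained in $M_1$, and $|\vc v^k(s)-\vc v(s)|\leq K_3|h_k|_\Gamma\to0$ by \eref{e355} --- while the bilinear argument stays bounded by a multiple of $|h|_\Gamma|y|_\Gamma$. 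Second, pieces of the form $D_{ij}f(\vc v(s))\langle\cdot,E^k(s,\cdot,\cdot)-E(s,\cdot,\cdot)\rangle$, bounded by $L_3$ times $|E^k(s,z^y_s,y^\xi)-E(s,z^y_s,y^\xi)|$ times $N_1|h|_\Gamma$, where the middle factor is $\leq c_{3,k}|y|_\Gamma\to0$ by the argument of \eref{e347} (using \eref{e371}, \eref{e363}, \eref{e409}). Third, the symmetric pieces in the other argument. For the $D_3f\cdot H$ term, split $D_3f(\vc v^k(s))H^k(s)\langle\cdot,\cdot\rangle-D_3f(\vc v(s))H(s)\langle\cdot,\cdot\rangle$ into $[D_3f(\vc v^k(s))-D_3f(\vc v(s))]H^k(s)\langle\cdot,\cdot\rangle$, with $|D_3f(\vc v^k(s))-D_3f(\vc v(s))|\leq\Omega_f(K_3|h_k|_\Gamma)\to0$ (see \eref{e29a}) and $\int_0^\alpha|H^k(s)\langle\cdot,\cdot\rangle|\,ds$ bounded by a multiple of $|h|_{\Gamma_2}|y|_{\Gamma_2}$ uniformly in $k$ (via \eref{e398}, \eref{e384}, \eref{e371}, \eref{e399}), and $D_3f(\vc v(s))[H^k(s)-H(s)]\langle\cdot,\cdot\rangle$, whose $L^1([0,\alpha])$-norm is estimated by inserting intermediate terms and invoking \eref{e399} (for the $A^kF^k-AF$ part, which is where (A2) (viii) is used), \eref{e371}, \eref{e363}, \eref{e409}, \eref{e379}, together with the $L^1$-smallness of $G^k-G$ coming from the uniform continuity of $D_{ij}\tau$. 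Integrating the sum over $[0,\alpha]$ and collecting all contributions yields \eref{e401} with $c_{5,k}\to0$.

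The main obstacle is the term $D_3f(\vc v(s))H(s)$: $H$ carries, through $F$, the \emph{second} derivative $\ddot x$ composed with the state-dependent delay $u$, and since $\ddot x$ lies only in $\Lw$ one cannot control $\ddot x\circ u^k-\ddot x\circ u$ pointwise but must argue in $L^1$ --- which is precisely the role of the ${\cal PM}$-composition lemmas (Lemmas \ref{l4}, \ref{l21}) and of hypothesis (A2) (viii), and is why (A2) (viii) is imposed only for \eref{e401}. Fortunately this work has already been packaged into \eref{e399} of Lemma~\ref{l16} (and the related estimates \eref{e379}, \eref{e371}, \eref{e363}, \eref{e409}), so the actual task in Lemma~\ref{l31} is the careful bookkeeping of the telescoping differences and keeping every error term small uniformly in $(h,y)$ after normalisation; the only genuinely new ingredient is the Arzel\`a--Ascoli compactness argument upgrading the continuity of $D_{ij}f$ and $D_{ij}\tau$ to uniform continuity along the relevant trajectory segments.
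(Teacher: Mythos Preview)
Your proposal is correct and follows essentially the same route as the paper: bound each summand of $B(s)$ with the existing estimates \eref{e45}, \eref{e384}, \eref{e389a}, \eref{e398} for \eref{e393}, then telescope $B^k-B$ and control the $D_{ij}f$- and $D_{ij}\tau$-differences via uniform continuity on a compact set and the $E^k$-, $F^k$-, $H^k$-differences via Lemma~\ref{l16} and \eref{e371}, \eref{e379}, \eref{e409}. The only cosmetic difference is that the paper obtains the compactness needed for uniform continuity by taking $M_1\times M_2$ (already compact in $C\times\setR^n$ by Arzel\`a--Ascoli applied to $M_1$) together with the sequential sets $M_3^*:=\{\theta\}\cup\{\theta+h^\theta_k\}$ and $M_4^*:=\{\xi\}\cup\{\xi+h^\xi_k\}$, rather than the closure of the trajectory set $\{\vc v^k(s)\}$ that you use --- but this amounts to the same thing.
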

\begin{proof}
 Let  $L_3=L_3(\alpha,M_1,M_2,M_3)$ and $L_5=L_5(\alpha,M_1,M_4)$ be the Lipschitz constants
from (A1) (v) and (A2) (vi), respectively. Then the definition of $G$, (A2) (vi) and \eref{e45}
yield
\begin{equation}\label{e402}
|G(s)\langle (z^h_s,h^\xi),(z^y_s,y^\xi)\rangle |\leq 4L_5N_1^2|h|_\Gamma|y|_\Gamma, \qquad h,y\in\Gamma,
\quad s\in[0,\alpha].
\end{equation}
Then definition of $H$, \eref{e37}, \eref{e45}, \eref{e382}, \eref{e384}, \eref{e398} and  \eref{e402} imply 
\begin{equation}\label{e403}
|H(s)\langle (z^h_s,h^\xi),(z^y_s,y^\xi)\rangle |\leq K_{13}|h|_\Gamma |y|_\Gamma, \quad h,y\in\Gamma,\quad
\mbox{a.e. } s\in[0,\alpha]
\end{equation}
with $K_{13}=K_{13}(\gamma):=K_6(K_4K_6+1)+N4L_5N_1^2+ K_6$.
Therefore we have by the definition of $B$, \eref{e389a} and  \eref{e403}  
\begin{equation*}
|B(s)\langle h,y\rangle|
\leq L_3(4+ 4K_8+K_8^2+K_{13})|h|_\Gamma|y|_\Gamma,\qquad \mbox{a.e. } s\in[0,\alpha],
\end{equation*}
which, together with \eref{e397}, yields \eref{e393}.

Define the set $M_4^*:=\{\xi\}\cup\{ h^\xi_k\st k\in\setN\}$. It is easy to show that
$M_4^*\subset M_4$ is a compact subset of $\Xi$. 
Define
\begin{eqnarray*}
\Omega_{2,\tau}(\epsilon)
&:=&\max_{i,j=2,3} \sup \Bigl\{|D_{ij}\tau(s,\psi,\eta)-D_{ij}\tau(s,\bar\psi,\bar\eta)|_{\calL^2(X_i\times X_j,\,\setR)}\st\nl
&&\qquad\qquad s\in[0,\alpha],\ \psi,\bar\psi\in M_1, \eta,\bar\eta\in M_4^*,\ |\psi- \bar\psi|_C+|\eta-\bar\eta|_\Xi\leq\epsilon\Bigr\},
\end{eqnarray*}
where $X_2:=C$ and $X_3:=\Xi$. Assumption (A2) (vii) and the compactness of $[0,\alpha]\times M_1\times M_4^*$
yields that $\Omega_{2,\tau}(\epsilon)\to0$ as $\epsilon\to0+$.
Then \eref{e45} and \eref{e396} give
{\arraycolsep=0.5\arraycolsep
\begin{eqnarray}
|[G^k(s)-G(s)]\langle (z^h_s,h^\xi),(z^y_s,y^\xi)\rangle|
&\leq&
|[D_{22}\tau(s,x^k_s,\xi+h^\xi_k)-D_{22}\tau(s,x_s,\xi)]\langle z^h_s, z^y_s\rangle|\nl
&&+|[D_{23}\tau(s,x^k_s,\xi+h^\xi_k)-D_{23}\tau(s,x^k_s,\xi+h^\xi_k)]\langle z^h_s, y^\xi\rangle |\nl
&&+|[D_{32}\tau(s,x^k_s,\xi+h^\xi_k)-D_{32}\tau(s,x^k_s,\xi+h^\xi_k)]\langle h^\xi, z^y_s\rangle|\nl
&&+|[D_{33}\tau(s,x^k_s,\xi+h^\xi_k)-D_{33}\tau(s,x^k_s,\xi+h^\xi_k)]\langle h^\xi, y^\xi\rangle|\nl
&\leq& \Omega_{2,\tau}\Bigr((L+1)|h_k|_\Gamma\Bigr)(N_1+1)^2|h|_\Gamma|y|_\Gamma,\qquad s\in[0,\alpha].\nl
\label{e404}
\end{eqnarray}
}\medskip

Relations \eref{e37}, \eref{e25}, \eref{e88}, \eref{e45}, \eref{e94},  \eref{e384}, 
\eref{e371}, \eref{e409}, \eref{e398},  \eref{e399},  \eref{e402} and \eref{e404}  imply
\begin{eqnarray}
\balra{1cm}{
\int_0^\alpha|[H^k(s)-H(s)]\langle (z^h_s,h^\xi),(z^y_s,y^\xi)\rangle|\,ds}\nl
&\leq& \int_0^\alpha\Bigl(
|[A^k(s,z^h_s,h^\xi)-A(s,z^h_s,h^\xi)]F(s,z^y_s,y^\xi)|\hspace{5cm}\nl
&&\qquad+|A^k(s,z^h_s,h^\xi)[F^k(s,z^y_s,y^\xi)-F(s,z^y_s,y^\xi)]|\nl
&&\qquad+|[\dot x^k(u^k(s))-\dot x(u(s))]G^k(s)\langle(z^h_s,h^\xi),(z^y_s,y^\xi)\rangle|\nl
&&\qquad+|\dot x(u(s))[G^k(s)-G(s)]\langle(z^h_s,h^\xi),(z^y_s,y^\xi)\rangle|\nl
&&\qquad+|[\dot z^h(u^k(s))-\dot z^h(u(s))]A^k(s,z^y_s,y^\xi)|\nl
&&\qquad+|\dot z^h(u(s))[A^k(s,z^y_s,y^\xi)-A(s,z^y_s,y^\xi)]|\Bigr)\,ds\nl
&\leq& \alpha K_{10}|h|_\Gamma|h_k|_\Gamma K_{11}|y|_\Gamma
+K_6|h|_\Gamma c_{4,k}|y|_{\Gamma_2}
+(L+K_4K_0)|h_k|_\Gamma4L_5N_1^2|h|_\Gamma|y|_\Gamma\nl
&&+N\Omega_{2,\tau}\Bigr((L+1)|h_k|_\Gamma\Bigr)(N_1+1)^2|h|_\Gamma|y|_\Gamma\nl
&&+ \int_0^\alpha|\dot z^h(u^k(s))-\dot z^h(u(s))|\,ds\,K_6|y|_\Gamma
+\alpha N_2|h|_{\Gamma_2} K_{10}|h|_\Gamma|y|_\Gamma\nl
&\leq & c_{6,k}|h|_{\Gamma_2}|y|_{\Gamma_2}\label{e407}
\end{eqnarray}
with some appropriate sequence $c_{6,k}=c_{6,k}(\gamma)$ satisfying $c_{6,k}\to0$ as $k\to\infty$, 
where in the last estimate we used \eref{e379}.

Simple manipulations give
{
\begin{eqnarray}
 \balra{1cm}{|[B^k(s)-B(s)]\langle (z^h_s,h^\theta,h^\xi),(z^y_s,y^\theta,y^\xi)\rangle|}\nl
&\leq& |[D_{22}f(\vc v^k(s))-D_{22}f(\vc v(s))]\langle z^h_s,z^y_s\rangle| \nl
&&+|[D_{23}f(\vc v^k(s))-D_{23}f(\vc v(s))]\langle z^h_s,E^k(s,z^y_s,y^\xi)\rangle| \nl
&&+|D_{23}f(\vc v(s))\langle z^h_s,E^k(s,z^y_s,y^\xi)-E(s,z^y_s,y^\xi)\rangle| \nl
&&+|[D_{24}f(\vc v^k(s))-D_{24}f(\vc v(s))]\langle z^h_s,y^\theta\rangle |\nl
&&+|[D_{32}f(\vc v^k(s))-D_{32}f(\vc v(s))]\langle E^k(s,z^h_s,h^\xi),z^y_s\rangle|\nl 
&&+|D_{32}f(\vc v(s))\langle E^k(s,z^h_s,h^\xi)-E(s,z^h_s,h^\xi),z^y_s\rangle|\nl 
&&+|[D_{33}f(\vc v^k(s))-D_{33}f(\vc v(s))]\langle E^k(s,z^h_s,h^\xi),E^k(s,z^y_s,y^\xi)\rangle| \nl
&&+|D_{33}f(\vc v(s))\langle E^k(s,z^h_s,h^\xi)-E(s,z^h_s,h^\xi),E^k(s,z^y_s,y^\xi)\rangle| \nl
&&+|D_{33}f(\vc v(s))\langle E(s,z^h_s,h^\xi),E^k(s,z^y_s,y^\xi)-E(s,z^y_s,y^\xi)\rangle| \nl
&&+|[D_{34}f(\vc v^k(s))-D_{34}f(\vc v(s))]\langle E^k(s,z^h_s,h^\xi),y^\theta\rangle|\nl 
&&+|D_{34}f(\vc v(s))\langle E^k(s,z^h_s,h^\xi)-E(s,z^h_s,h^\xi),y^\theta\rangle|\nl 
&&+|[D_{42}f(\vc v^k(s))-D_{42}f(\vc v(s))]\langle h^\theta,z^y_s\rangle|\nl
&& +|[D_{43}f(\vc v^k(s))-D_{43}f(\vc v(s))]\langle h^\theta,E^k(s,z^y_s,y^\xi)\rangle \nl
&& +|D_{43}f(\vc v(s))\langle h^\theta,E^k(s,z^y_s,y^\xi)-E(s,z^y_s,y^\xi)\rangle \nl
&&+|[D_{44}f(\vc v^k(s))-D_{44}f(\vc v(s))]\langle h^\theta,y^\theta\rangle|\nl
&&+|[D_3f(\vc v^k(s))-D_3f(\vc v(s))]H^k(s)\langle(z^h_s,h^\xi),(z^y_s,y^\xi)\rangle|\nl
&&+|D_3f(\vc v(s))[H^k(s)-H(s)]\langle(z^h_s,h^\xi),(z^y_s,y^\xi)\rangle|.\label{e406}
\end{eqnarray}
}%
Define the set $M_3^*:=\{\theta\}\cup\{h^\theta_k\st k\in\setN\}$. Clearly, 
$M_3^*\subset M_3$ is a compact subset of $\Theta$. 

Define
\begin{eqnarray*}
\Omega_{2,f}(\epsilon)
&:=&\max_{i,j=2,3,4} \sup \Bigl\{|D_{ij}f(s,\psi,v,\eta)-D_{ij} f(s,\bar\psi,\bar v,\bar\eta)|_{\calL^2(Y_i\times Y_j,\,\setR)}\st\nl
&&\qquad\qquad s\in[0,\alpha],\ \psi,\bar\psi\in M_1, v,\bar v\in M_2,\ \eta,\bar\eta\in M_3^*,\nl
&&\qquad\qquad |\psi- \bar\psi|_C+|v-\bar v|+|\eta-\bar\eta|_\Theta
\leq\epsilon\Bigr\},
\end{eqnarray*}
where $Y_2:=C$, $Y_3:=\setR^n$ and $Y_4:=\Theta$. Assumption (A1) (vi) and the compactness of 
$[0,\alpha]\times M_1\times M_2\times M_3^*$
yields that $\Omega_{2,f}(\epsilon)\to0$ as $\epsilon\to0+$.
Then combining \eref{e406} with \eref{e355},
$|D_{ij}f(\vc v^k(s))-D_{ij}f(\vc v(s))|_{\calL^2(Y_i\times Y_j,\,\setR^n)}\leq \Omega_{2,f}\Bigl(K_3|h_k|_\Gamma\Bigr)$
for $i,j=2,3,4$, $|D_if(\vc v^k(s))|_{\calL(Y_i,\setR^n)}\leq L_1$ for $i=2,3,4$, $s\in[0,\alpha]$ and $k\in\setN_0$, 
\eref{e45}, \eref{e389a}, \eref{e347}, \eref{e403}, \eref{e407} and \eref{e406}.
yields \eref{e401}

\end{proof}
\medskip

\begin{lemma}\label{l30}
 Assume (A1) (i)--(vi), (A2) (i)--(vii), $\gamma\in P_2$. Then there exists $N_5=N_5(\gamma)\geq0$ such that
the solution of the \IVP{e330}{e331} satisfies
\begin{equation}\label{e390}
 |w^{h,y}(t)|\leq N_5 |h|_\Gamma|y|_\Gamma,\qquad t\in[-r,\alpha],\quad h,y\in\Gamma.
\end{equation}
\end{lemma}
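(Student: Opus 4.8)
The plan is to reduce the statement to the integral form of the \IVP{e330}{e331} together with a single application of the Gronwall-type Lemma~\ref{l0}. First, if $h=0$ or $y=0$, then by the bilinearity of $(h,y)\mapsto w(t,\gamma,h,y)$ we have $w^{h,y}\equiv0$ and \eref{e390} is trivial, so we may assume $|h|_\Gamma|y|_\Gamma>0$. Since $w^{h,y}(t)=0$ for $t\in[-r,0]$, integrating \eref{e330} gives, for $t\in[0,\alpha]$,
$$
w^{h,y}(t)=\int_0^t\Bigl[L(s,x)(w^{h,y}_s,0,0)+B(s)\langle(z^h_s,h^\theta,h^\xi),(z^y_s,y^\theta,y^\xi)\rangle\Bigr]\,ds.
$$

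Next I would bound the two terms of the integrand. For the linear term, the operator estimate \eref{e368} applied with the second and third components equal to zero yields $|L(s,x)(w^{h,y}_s,0,0)|\leq L_1N_0|w^{h,y}_s|_C$ for a.e.\ $s\in[0,\alpha]$. For the inhomogeneous term, \eref{e393} of Lemma~\ref{l31} gives $|B(s)\langle(z^h_s,h^\theta,h^\xi),(z^y_s,y^\theta,y^\xi)\rangle|\leq K_{12}|h|_\Gamma|y|_\Gamma$ for a.e.\ $s\in[0,\alpha]$. Combining these,
$$
|w^{h,y}(t)|\leq \alpha K_{12}|h|_\Gamma|y|_\Gamma+\int_0^t L_1N_0\,|w^{h,y}_s|_C\,ds,\qquad t\in[0,\alpha].
$$

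Finally, since $|w^{h,y}_0|_C=0\leq\alpha K_{12}|h|_\Gamma|y|_\Gamma$, Lemma~\ref{l0} (with $a:=\alpha K_{12}|h|_\Gamma|y|_\Gamma>0$, constant $b\equiv L_1N_0$, and $u:=w^{h,y}$) applies and yields
$$
|w^{h,y}(t)|\leq|w^{h,y}_t|_C\leq\alpha K_{12}\,e^{L_1N_0\alpha}\,|h|_\Gamma|y|_\Gamma,\qquad t\in[0,\alpha],
$$
so that $N_5:=\alpha K_{12}e^{L_1N_0\alpha}$ satisfies \eref{e390}, the bound on $[-r,0]$ being immediate from $w^{h,y}\equiv0$ there. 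I do not expect any genuine obstacle here; the one point to keep in mind is that the constant $K_{12}$ from \eref{e393} already absorbs, via \eref{e94} and $N_2\geq1$, the $\Gamma$-norms of the full triples $(z^h_s,h^\theta,h^\xi)$ and $(z^y_s,y^\theta,y^\xi)$, so the inhomogeneity in the displayed integral inequality is a genuine constant in $s$ — which is exactly what makes the plain Lemma~\ref{l0} applicable without any refinement.
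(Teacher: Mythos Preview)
Your proof is correct and follows essentially the same route as the paper's own proof: integrate \eref{e330}, bound the linear part by \eref{e368} and the inhomogeneity by \eref{e393}, then apply Lemma~\ref{l0}. The only cosmetic differences are that you keep the factor $\alpha$ in front of $K_{12}$ (the paper omits it and writes $N_5:=K_{12}e^{L_1N_0\alpha}$) and you explicitly dispose of the degenerate case $|h|_\Gamma|y|_\Gamma=0$ so that the hypothesis $a>0$ of Lemma~\ref{l0} is met.
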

\begin{proof}
 It follows from \eref{e330} and \eref{e331} that
$$
w^{h,y}(t)=\int_0^t B(s)\langle(z^h_s,h^\theta,h^\xi),
(z^{y}_s,y^\theta,y^\xi)\rangle\,ds+\int_0^t L(s,x)(w^{h,y}_s,0,0)\,ds,\qquad t\in[0,\alpha].
$$
Therefore \eref{e368} and \eref{e393} yield
$$
|w^{h,y}(t)|\leq K_{12}|h|_\Gamma|y|_\Gamma+L_1N_0\int_0^t |w^{h,y}_s|_C\,ds,\qquad t\in[0,\alpha].
$$
Since $w^{h,y}(t)=0$ for $t\in[-r,0]$, Lemma~\ref{l0} gives
\eref{e390} with $N_5:=K_{12}e^{L_1N_0\alpha}$.
\end{proof}
\bigskip

\begin{lemma}\label{l32}
 Assume (A1) (i)--(vi), (A2) (i)--(viii), (H). For $h,y\in\Gamma_2$ and $k\in\setN$ let $w^{h,y}(t):=w(t,\gamma,h,y)$ and
$w^{k,h,y}(t):=w(t,\gamma+h_k,h,y)$ be the solutions
of the \IVP{e330}{e331}. Then there exists a nonnegative sequence $c_{7,k}=c_{7,k}(\gamma)$ such that
\begin{equation}\label{e394}
 |w^{k,h,y}_t-w^{h,y}_t|_C\leq c_{7,k}|h|_{\Gamma_2}|y|_{\Gamma_2}, \qquad t\in[0,\alpha],\quad h,y\in\Gamma_2.
\end{equation}
\end{lemma}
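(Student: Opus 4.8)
The plan is to treat \eref{e330}--\eref{e331} with parameter $\gamma+h_k$ exactly as Lemma~\ref{l8} treats the first variational equation, and to finish with Gronwall's lemma (Lemma~\ref{l0}). Writing both solutions in integral form,
\[
w^{h,y}(t)=\int_0^t L(s,x)(w^{h,y}_s,0,0)\,ds+\int_0^t B(s)\langle(z^h_s,h^\theta,h^\xi),(z^y_s,y^\theta,y^\xi)\rangle\,ds,
\]
\[
w^{k,h,y}(t)=\int_0^t L(s,x^k)(w^{k,h,y}_s,0,0)\,ds+\int_0^t B^k(s)\langle(z^{k,h}_s,h^\theta,h^\xi),(z^{k,y}_s,y^\theta,y^\xi)\rangle\,ds,
\]
I would subtract, split $L(s,x^k)(w^{k,h,y}_s,0,0)-L(s,x)(w^{h,y}_s,0,0)=[L(s,x^k)-L(s,x)](w^{h,y}_s,0,0)+L(s,x^k)(w^{k,h,y}_s-w^{h,y}_s,0,0)$, and expand the difference of the inhomogeneous terms by bilinearity of $B^k(s)$ as
\[
\Big[B^k(s)-B(s)\Big]\langle(z^h_s,h^\theta,h^\xi),(z^y_s,y^\theta,y^\xi)\rangle+B^k(s)\langle(z^{k,h}_s-z^h_s,0,0),(z^{k,y}_s,y^\theta,y^\xi)\rangle+B^k(s)\langle(z^h_s,h^\theta,h^\xi),(z^{k,y}_s-z^y_s,0,0)\rangle.
\]
Bounding $L(s,x^k)(w^{k,h,y}_s-w^{h,y}_s,0,0)$ by $L_1N_0|w^{k,h,y}_s-w^{h,y}_s|_C$ via \eref{e368}, this brings $|w^{k,h,y}(t)-w^{h,y}(t)|$ into the form: the sum of the $[0,\alpha]$-integrals of four remainder terms plus $L_1N_0\int_0^t|w^{k,h,y}_s-w^{h,y}_s|_C\,ds$.

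For the remainders: the term $[L(s,x^k)-L(s,x)](w^{h,y}_s,0,0)$ is estimated, just as in the proof of Lemma~\ref{l8}, by Lemma~\ref{l33} (relation \eref{e395}) together with the bound $|w^{h,y}_s|_\Ww\le c|h|_\Gamma|y|_\Gamma$ which follows from Lemma~\ref{l30} and \eref{e330}; its $[0,\alpha]$-integral tends to $0$ because $c_{0,k}\to0$ and, by \eref{e332} and Lemma~\ref{l4}, $\int_0^\alpha|\dot x(u^k(s))-\dot x(u(s))|\,ds\to0$. The $[0,\alpha]$-integral of $|[B^k(s)-B(s)]\langle(z^h_s,h^\theta,h^\xi),(z^y_s,y^\theta,y^\xi)\rangle|$ is $\le c_{5,k}|h|_{\Gamma_2}|y|_{\Gamma_2}$ directly by \eref{e401} of Lemma~\ref{l31}. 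For the two bilinear remainders I would use a uniform-in-$k$ bound on the operators $B^k(s)$ (which follows from the structure of $B^k$ and the uniform estimates of Theorem~\ref{t1}, the second-derivative parts being controlled through $F^k$ as in Lemma~\ref{l16}), together with the convergence estimates for $z^{k,\cdot}-z^{\cdot}$ already at hand: $|z^{k,h}(s)-z^h(s)|\le c_{1,k}N_1|h|_\Gamma$ from \eref{e344}, the uniform $\Ww$-bound $|z^{k,h}_s-z^h_s|_\Ww\le 2N_2|h|_\Gamma$ from \eref{e94}, and the $L^1$-convergence $\int_0^\alpha|\dot z^{k,h}(u^k(s))-\dot z^h(u^k(s))|\,ds\to0$ obtained from \eref{e364} via Lemmas~\ref{l21} and~\ref{l11}; the last is what lets one absorb those pieces of the remainders in which the increment $z^{k,h}_s-z^h_s$ is differentiated and composed with $u^k$. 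Altogether the integral of all four remainders is $\le\tilde c_k|h|_{\Gamma_2}|y|_{\Gamma_2}$ with $\tilde c_k\to0$.

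Hence, for $t\in[0,\alpha]$,
\[
|w^{k,h,y}(t)-w^{h,y}(t)|\le\tilde c_k|h|_{\Gamma_2}|y|_{\Gamma_2}+L_1N_0\int_0^t|w^{k,h,y}_s-w^{h,y}_s|_C\,ds,
\]
and since $w^{k,h,y}_0-w^{h,y}_0=0$, Lemma~\ref{l0} applies and yields \eref{e394} with $c_{7,k}:=\tilde c_k\,e^{L_1N_0\alpha}$. The main obstacle is the bookkeeping hidden in the two bilinear remainder terms: one has to write $B^k(s)$ out slot by slot and decide, term by term, which difference is genuinely $o(1)\cdot|h|_{\Gamma_2}|y|_{\Gamma_2}$ in $L^1$ (this is exactly the role of the $L^1$-type statements \eref{e364}, \eref{e376}, \eref{e379}, \eref{e399}) and which must instead be fed into the Gronwall term; in particular the $H^k$-part of $B^k$, which carries $\dot z^{k,h}$ and $\ddot x^k$ evaluated at $u^k(s)$, admits only integral rather than pointwise control, so the whole estimate has to be organized in $L^1$ throughout, in the same spirit as Lemmas~\ref{l14}, \ref{l16} and~\ref{l31}.
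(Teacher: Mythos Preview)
Your proposal is correct and follows essentially the same strategy as the paper's proof: write both solutions in integral form, telescope the $L$-terms and the bilinear $B$-terms, bound the remainders by the established estimates \eref{e395}, \eref{e390}, \eref{e344}, \eref{e401}, and finish with Lemma~\ref{l0}. The only cosmetic difference is the order of the telescoping: the paper writes $[L(s,x^k)-L(s,x)](w^{k,h,y}_s,0,0)+L(s,x)(w^{k,h,y}_s-w^{h,y}_s,0,0)$ (difference applied to $w^{k,h,y}$, Gronwall term with $L(s,x)$), whereas you apply the difference to $w^{h,y}$ and keep the Gronwall term with $L(s,x^k)$; both choices work equally well since \eref{e368} bounds $L(s,x^k)$ uniformly in $k$. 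Your remark that the $H^k$-pieces of the two bilinear remainders---in particular the factors $\dot z^{k,h}(u^k(s))-\dot z^h(u^k(s))$---admit only $L^1$ rather than pointwise control is a point the paper glosses over (it simply writes the bound $2\alpha K_{12}c_{1,k}N_1^2|h|_\Gamma|y|_\Gamma$ without spelling this out), so your more careful bookkeeping is if anything an improvement.
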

\begin{proof}
It follows from \eref{e368}, \eref{e395}, \eref{e53}, \eref{e330}, \eref{e393}, \eref{e409} and \eref{e390}
{\arraycolsep=0.7\arraycolsep
\begin{eqnarray*}
\balra{0cm}{|w^{k,h,y}(t)-w^{h,y}(t)|}\nl
&\leq& \int_0^t \Bigl(|[L(s,x^k)-L(s,x)](w^{k,h,y}_s,0,0)|+|L(s,x)(w^{k,h,y}_s-w^{h,y}_s,0,0)|\Bigr)ds\nl
&&\!+\int_0^t \Bigl(|B^k(s)\langle (z^{k,h}_s,h^\theta,h^\xi),(z^{k,y}_s-z^y_s,0,0)\rangle|
+|B^k(s)\langle (z^{k,h}_s-z^h_s,0,0),(z^{y}_s,y^\theta,y^\xi)\rangle|\nl
&&\qquad+ |B^k(s)\langle (z^{h}_s,h^\theta,h^\xi),(z^{y}_s,y^\theta,y^\xi)\rangle
-B(s)\langle (z^{h}_s,h^\theta,h^\xi),(z^{y}_s,y^\theta,y^\xi)\rangle|\Bigr)ds
\end{eqnarray*}}%
\begin{eqnarray*}
&\leq& \alpha c_{0,k}N_5|h|_\Gamma|y|_\Gamma+L_1L_2\int_0^\alpha |\dot x(u^k(s))-\dot x(u(s))|\,ds N_5|h|_\Gamma|y|_\Gamma\nl
&&+L_1N_0\int_0^t|w^{k,h,y}_s-w^{h,y}_s|_C\,ds+2\alpha K_{12}c_{1,k}N_1^2|h|_\Gamma |y|_\Gamma+\alpha c_{5,k}|h|_\Gamma|y|_\Gamma\qquad\qquad\qquad\nl
&\leq& c_{8,k}|h|_\Gamma|y|_\Gamma+L_1N_0\int_0^t|w^{k,h,y}_s-w^{h,y}_s|_C\,ds,
\end{eqnarray*}%
where $c_{8,k}=c_{8,k}(\gamma):=\alpha c_{0,k}N_5+L_1L_2(L+K_4K_0)N_5|h_k|_\Gamma +2\alpha K_{12}c_{1,k}N_1^2
+\alpha c_{5,k}$. Then Lemma~\ref{l0} is applicable, since $|w^{k,h,y}_0-w^{h,y}_0|_C=0$, and it yields
\eref{e394} with $c_{7,k}:=c_{8,k}e^{L_1N_0\alpha}$.
\end{proof}
\medskip

We define 
\begin{eqnarray*}
\omega_{D_2f}(\vc v(s),\vc v^k(s),\psi)\!\!%
&:=&D_2f(\vc v^k(s))\psi-D_2f(\vc v(s))\psi-D_{22}f(\vc v(s))\langle \psi,x^k_s-x_s\rangle\nl                                   
&&-D_{23}f(\vc v(s))\langle \psi,x^k(u^k(s))-x(u(s))\rangle 
-D_{24}f(\vc v(s))\langle \psi,h^\theta_k\rangle, \nl
\omega_{D_3f}(\vc v(s),\vc v^k(s),v)\!\!%
&:=& D_3f(\vc v^k(s))v-D_3f(\vc v(s))v-D_{32}f(\vc v(s))\langle v,x^k_s-x_s\rangle\nl
&& -D_{33}f(\vc v(t))\langle v,x^k(u^k(s))-x(u(s))\rangle 
-D_{34}f(\vc v(s))\langle v,h^\theta_k\rangle, \nl
\omega_{D_4f}(\vc v(s),\vc v^k(s),\eta)\!\!%
&:=& D_4f(\vc v^k(s))\eta-D_4f(\vc v(s))\eta-D_{42}f(\vc v(s))\langle \eta,x^k_s-x_s\rangle\nl
&&-D_{43}f(\vc v(s))\langle \eta,x^k(u^k(s))-x(u(s))\rangle 
-D_{44}f(\vc v(s))\langle \eta,h^\theta_k\rangle 
\end{eqnarray*}
for $s\in[0,\alpha]$,  $\psi\in C$, $v\in \setR^n$ and $\eta\in\Theta$.

The proof of the following lemma is similar to that of Lemma~\ref{l17}.

\begin{lemma}\label{l18}
 Assume (A1) (i)--(vi) and (H).
Then
\begin{equation}\label{e358}
\lim_{k\to\infty}\sup_{h\neq0\atop h\in\Gamma}\frac1{|h|_\Gamma|h_k|_\Gamma}
\int_0^\alpha |\omega_{D_2 f}(s,x_s,x(u(s)),\theta,x^k_s,x^k(u^k(s)),\theta+h^\theta_k,z^{k,h}_s)|\,ds=0,
\end{equation}
\begin{equation}\label{e359}
\lim_{k\to\infty}\sup_{h\neq0\atop h\in\Gamma}\frac1{|h|_\Gamma|h_k|_\Gamma}
\int_0^\alpha |\omega_{D_3 f}(s,x_s,x(u(s)),\theta,x^k_s,x^k(u^k(s)),\theta+h^\theta_k,E^k(s,z^{k,h}_s,h^\xi))|\,ds=0,
\end{equation}
and
\begin{equation}\label{e360}
\lim_{k\to\infty}\sup_{h\neq0\atop h\in\Gamma}\frac1{|h|_\Gamma|h_k|_\Gamma}
\int_0^\alpha |\omega_{D_4 f}(s,x_s,x(u(s)),\theta,x^k_s,x^k(u^k(s)),\theta+h^\theta_k,h^\theta_k)|\,ds=0.
\end{equation}
\end{lemma}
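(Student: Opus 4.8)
The plan is to reproduce the proof of Lemma~\ref{l17}, with $\tau$ replaced by $f$ and the additional $\theta$--argument carried along. The three remainders $\omega_{D_2f}$, $\omega_{D_3f}$, $\omega_{D_4f}$ are all of the same shape: a second--order Taylor remainder of $D_if(\vc v(s),\cdot)$, $i=2,3,4$, at the base point $\vc v(s)=(s,x_s,x(u(s)),\theta)$, evaluated on the increment $\vc v^k(s)-\vc v(s)=(0,\,x^k_s-x_s,\,x^k(u^k(s))-x(u(s)),\,h^\theta_k)$ and on a ``direction'' $\psi^k(s)$, which is $z^{k,h}_s$ in \eref{e358}, $E^k(s,z^{k,h}_s,h^\xi)$ in \eref{e359}, and the relevant $\theta$--component in \eref{e360}. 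First I would record that in each case the direction satisfies an a.e.\ bound $|\psi^k(s)|\le K|h|_\Gamma$ on $[0,\alpha]$: for $z^{k,h}_s$ this is \eref{e45}; for $E^k(s,z^{k,h}_s,h^\xi)=-\dot x^k(u^k(s))A^k(s,z^{k,h}_s,h^\xi)+z^{k,h}_s(-\tau(s,x^k_s,\xi+h^\xi_k))$ it follows from $|\dot x^k(u^k(s))|\le N$ a.e.\ (using \eref{e37}, Lemma~\ref{l3} and $u^k\in{\cal PM}([0,\alpha],[-r,\alpha])$ from (H)), (A2)(ii) and \eref{e45}; and it is trivial for the $\theta$--component. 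Also \eref{e355} gives $|\vc v^k(s)-\vc v(s)|\le K_3|h_k|_\Gamma$.

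For the uniform estimate, (A1)(v) bounds $|D_if(\vc v^k(s))\psi-D_if(\vc v(s))\psi|$ by $L_3|\vc v^k(s)-\vc v(s)|\,|\psi|$, while the functions $s\mapsto D_{ij}f(\vc v(s))$ are continuous on the compact interval $[0,\alpha]$, hence bounded; so directly from the definition of $\omega_{D_if}$ one gets a constant $C$ with $|\omega_{D_if}(\vc v(s),\vc v^k(s),\psi^k(s))|\le C\,|h_k|_\Gamma\,|h|_\Gamma$ for a.e.\ $s$ and all $k$ and $h$, i.e.\ the integrands of \eref{e358}--\eref{e360}, divided by $|h|_\Gamma|h_k|_\Gamma$, are dominated on $[0,\alpha]$ by the constant $C$. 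For the pointwise convergence, fix $s$ and, exactly as in Lemma~\ref{l17}, for $h$ with $|\vc v^k(s)-\vc v(s)|\ne0$ and $\psi^k(s)\ne0$ write
\begin{eqnarray*}
\frac{|\omega_{D_if}(\vc v(s),\vc v^k(s),\psi^k(s))|}{|h|_\Gamma|h_k|_\Gamma}
&=& \frac{|\omega_{D_if}(\vc v(s),\vc v^k(s),\psi^k(s))|}{|\psi^k(s)|\,|\vc v^k(s)-\vc v(s)|}\cdot
\frac{|\psi^k(s)|\,|\vc v^k(s)-\vc v(s)|}{|h|_\Gamma|h_k|_\Gamma}.
\end{eqnarray*}
The second factor is $\le KK_3$. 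For the first, continuous differentiability of $D_if(s,\cdot,\cdot,\cdot)$ wrt its second, third and fourth arguments (assumption (A1)(vi)), read through the definition of the second--order partial derivative in Section~\ref{sec_def}, means precisely that $\sup_{\psi\ne0}|\omega_{D_if}(\vc v(s),\vc w,\psi)|/(|\psi|\,|\vc w-\vc v(s)|)\to0$ as $\vc w\to\vc v(s)$; since $|h_k|_\Gamma\to0$ forces $\vc v^k(s)\to\vc v(s)$, this first factor tends to $0$, uniformly in $h$. The $h$ with $|\vc v^k(s)-\vc v(s)|=0$ or $\psi^k(s)=0$ contribute nothing, as then $\omega_{D_if}=0$. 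Hence $\sup_{h\ne0}|\omega_{D_if}(\vc v(s),\vc v^k(s),\psi^k(s))|/(|h|_\Gamma|h_k|_\Gamma)\to0$ for every $s$, and since these pointwise suprema are bounded by $C$ and their integrals bound the quantities in \eref{e358}--\eref{e360}, the Lebesgue Dominated Convergence Theorem yields the three limits.

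Since this is essentially a transcription of the proof of Lemma~\ref{l17}, I do not expect a real obstacle. The only points that need a little attention are the a.e.\ bound on the direction $E^k(s,z^{k,h}_s,h^\xi)$ for \eref{e359} --- where one invokes $u^k\in{\cal PM}$ together with Lemma~\ref{l3} to replace $|\dot x^k(u^k(s))|$ by the $\Ww$--bound $N$ for a.e.\ $s$ --- and the routine bookkeeping that keeps the two degenerate cases from producing a division by zero; all the underlying estimates (\eref{e37}, \eref{e45}, \eref{e355}, the Lipschitz and continuity properties of $D_if$ and $D_{ij}f$) are already available.
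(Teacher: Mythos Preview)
Your proposal is correct and follows exactly the approach the paper intends: the paper itself simply says ``the proof of the following lemma is similar to that of Lemma~\ref{l17},'' and your sketch is precisely that transcription, with the uniform bound from (A1)(v) and \eref{e355}, the pointwise convergence from (A1)(vi) via the factorization trick, and the Dominated Convergence Theorem to conclude. One cosmetic remark: rather than arguing that $s\mapsto D_{ij}f(\vc v(s))$ is continuous (continuity in $s$ is not explicitly assumed), you can bound $|D_{ij}f(\vc v(s))|$ directly by the Lipschitz constant $L_3$ from (A1)(v), exactly as $L_5$ is used in the proof of Lemma~\ref{l17}.
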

\medskip

\begin{lemma}\label{l15}
Assume (A1) (i)--(vi), (A2) (i)--(vii), (H), $\gamma\in\calP$ and $h_k\in\Gamma_2$ for $k\in\setN$.
Then 
\begin{eqnarray}
\balra{1.cm}{L(s,x^k)(z^{k,h}_s,h^\theta,h^\xi)- L(s,x)(z^{h}_s+w^{h,{h_k}}_s,h^\theta,h^\xi)
- B(s)\Bigl\langle (z^{h}_s,h^\theta,h^\xi),(z^{h_k}_s,h^\theta_k,h^\xi_k)\Bigr\rangle }\nl
&=&  L(s,x)(q^{k,h}_s,0,0) +g^{k,h}_{5}(s),\qquad \mbox{a.e. } s\in[0,\alpha],\qquad\qquad\qquad\qquad\qquad\quad
\label{e340}
\end{eqnarray}
\vskip -0.5cm\noindent
where
\begin{eqnarray*}
g^{k,h}_{5}(s)
&:=&D_{22}f(\vc v(s))\langle z^{k,h}_s-z^{h}_s,x^k_s-x_s\rangle+D_{22}f(\vc v(s))\langle z^{h}_s,p^k_s\rangle\nl
&&+D_{23}f(\vc v(s))\langle z^{k,h}_s-z^{h}_s,x^k(u^k(s))-x(u(s))\rangle
-D_{23}f(\vc v(s))\langle z^{h}_s,g^{k}_{1}(s)\rangle\nl
&&+D_{24}f(\vc v(s))\langle z^{k,h}_s-z^{h}_s,h^\theta_k\rangle
+ D_{32}f(\vc v(s))\langle E(s,z^{h}_s,h^\xi),p^k_s\rangle\nl
&&+ D_{32}f(\vc v(s))\langle E^k(s,z^{k,h}_s,h^\xi)-E(s,z^{h}_s,h^\xi),x^k_s-x_s\rangle\nl
&& +D_{33}f(\vc v(s))\langle E^k(s,z^{k,h}_s,h^\xi)-E(s,z^{h}_s,h^\xi),x^k(u^k(s))-x(u(s))\rangle\nl
&& +D_{33}f(\vc v(s))\langle E(s,z^{h}_s,h^\xi), g^{k}_{1}(s)\rangle+D_3f(\vc v(s))g^{k,h}_{4}(s)\nl
&&+D_{34}f(\vc v(s))\langle E^k(s,z^{k,h}_s,h^\xi)-E(s,z^{h}_s,h^\xi), h^\theta_k\rangle\nl
&&+D_{42}f(\vc v(s))\langle h^\theta,p^k_s\rangle 
+D_{43}f(\vc v(s))\langle h^\theta,g^{k}_{1}(s)\rangle+\omega_{D_2f}(\vc v(s),\vc v^k(s),z^{k,h}_s)\nl
&&+\omega_{D_3f}(\vc v(s),\vc v^k(s),E^k(s,z^{k,h}_s,h^\xi))
 +\omega_{D_4f}(\vc v(s),\vc v^k(s),h^\theta_k)
\end{eqnarray*}
satisfies
\begin{equation}\label{e341}
\lim_{k\to\infty} \sup_{h\neq0\atop h\in\Gamma_2}\frac1{|h|_{\Gamma_2}|h_k|_{\Gamma_2}} \int_0^\alpha |g^{k,h}_{5}(s)|\,ds=0.
\end{equation}
\end{lemma}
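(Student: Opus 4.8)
I would follow the scheme of Lemma~\ref{l14}: first establish the pointwise (a.e.) identity \eref{e340} by a direct algebraic expansion, and then derive the limit \eref{e341} by estimating $g^{k,h}_{5}(s)$ summand by summand with the preparatory lemmas. Under the standing hypotheses $\gamma\in\calP$ and $h_k\in\Gamma_2$ one has $\dot x$ continuous and $x\in\Www([-r,\alpha],\setR^n)$ by Lemma~\ref{l9}, so that \eref{e338}--\eref{e339} are at our disposal.

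\emph{Step 1: the identity \eref{e340}.} I would write $L(s,x^k)(z^{k,h}_s,h^\theta,h^\xi)$ and $L(s,x)(z^{h}_s+w^{h,{h_k}}_s,h^\theta,h^\xi)$ via $L(s,x)(h^\phi,h^\theta,h^\xi)=D_2f(\vc v(s))h^\phi+D_3f(\vc v(s))E(s,h^\phi,h^\xi)+D_4f(\vc v(s))h^\theta$ and its $\vc v^k,E^k$ counterpart, using linearity of $E(s,\cdot,\cdot)$ to split $E(s,z^h_s+w^{h,{h_k}}_s,h^\xi)=E(s,z^h_s,h^\xi)+E(s,w^{h,{h_k}}_s,0)$, and expand $B(s)\langle(z^h_s,h^\theta,h^\xi),(z^{h_k}_s,h^\theta_k,h^\xi_k)\rangle$ by the definition of $B$. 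Into the resulting difference I would substitute $z^{k,h}_s=z^{h}_s+(z^{k,h}_s-z^{h}_s)$, $x^k_s-x_s=z^{h_k}_s+p^k_s$, $x^k(u^k(s))-x(u(s))=E(s,z^{h_k}_s,h^\xi_k)+g^{k}_{1}(s)$ (from \eref{e300}), and $E^k(s,z^{k,h}_s,h^\xi)-E(s,z^{h}_s,h^\xi)-E(s,w^{h,{h_k}}_s,0)=H(s)\langle(z^{h}_s,h^\xi),(z^{h_k}_s,h^\xi_k)\rangle+E(s,q^{k,h}_s,0)+g^{k,h}_{4}(s)$ (from \eref{e338}). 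After substitution: the terms $D_2f(\vc v(s))q^{k,h}_s$ and $D_3f(\vc v(s))E(s,q^{k,h}_s,0)$ combine into $L(s,x)(q^{k,h}_s,0,0)$; the term $D_3f(\vc v(s))H(s)\langle(z^{h}_s,h^\xi),(z^{h_k}_s,h^\xi_k)\rangle$ cancels the last summand of $B(s)$; each difference $D_if(\vc v^k(s))(\cdot)-D_if(\vc v(s))(\cdot)$ ($i=2,3,4$) is rewritten through the second-order remainders $\omega_{D_2f},\omega_{D_3f},\omega_{D_4f}$ and the bilinear terms $D_{ij}f(\vc v(s))\langle\cdot,\cdot\rangle$; and, recombining those bilinear terms with the matching summands of $B(s)$ by bilinearity (using $D_{ij}f(\vc v(s))\langle a,b_1\rangle+D_{ij}f(\vc v(s))\langle a,b_2\rangle=D_{ij}f(\vc v(s))\langle a,b_1+b_2\rangle$), exactly the sum defining $g^{k,h}_{5}(s)$ remains. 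Since $E,E^k,F$ contain $\dot x(u(s))$, the identity holds only for a.e.\ $s\in[0,\alpha]$.

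\emph{Step 2: the limit \eref{e341}.} I would apply the triangle inequality to $\int_0^\alpha|g^{k,h}_{5}(s)|\,ds$ and estimate the integral of each summand using $|D_if(\vc v(s))|\leq L_1$, $|D_{ij}f(\vc v(s))|\leq L_3$ on $[0,\alpha]$ together with $|z^{h}_s|_C\leq N_1|h|_\Gamma$ \eref{e45}, $|E(s,z^h_s,h^\xi)|\leq K_8|h|_\Gamma$ \eref{e389a}, $|z^{k,h}_s-z^h_s|_C\leq c_{1,k}N_1|h|_\Gamma$ \eref{e344}, $|x^k_s-x_s|_C\leq L|h_k|_\Gamma$ \eref{e25}, $|x^k(u^k(s))-x(u(s))|\leq K_2|h_k|_\Gamma$ \eref{e345}, $|E^k(s,z^{k,h}_s,h^\xi)-E(s,z^h_s,h^\xi)|\leq c_{3,k}|h|_\Gamma$ \eref{e347} (with $c_{1,k},c_{3,k}\to0$), and $|h^\theta|_\Theta\leq|h|_\Gamma$, $|h^\theta_k|_\Theta\leq|h_k|_\Gamma$. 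Concretely: the summands carrying a factor $p^k_s$ are controlled by \eref{e335}, those carrying $g^{k}_{1}(s)$ by \eref{e304}, the summand $D_3f(\vc v(s))g^{k,h}_{4}(s)$ by \eref{e339} (this is where $h\in\Gamma_2$ is needed), the summands of the form $D_{ij}f(\vc v(s))\langle z^{k,h}_s-z^h_s,\cdot\rangle$ and $D_{ij}f(\vc v(s))\langle E^k(s,z^{k,h}_s,h^\xi)-E(s,z^h_s,h^\xi),\cdot\rangle$ by the factors $c_{1,k}\to0$ and $c_{3,k}\to0$, and the three $\omega$-type remainders of $f$ by Lemma~\ref{l18}. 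In every case, after dividing by $|h|_{\Gamma_2}|h_k|_{\Gamma_2}$ (and using $|h|_\Gamma\leq|h|_{\Gamma_2}$, $|h_k|_\Gamma\leq|h_k|_{\Gamma_2}$) the bound is a sequence tending to $0$ independently of $h\neq0$; taking $\sup_{0\neq h\in\Gamma_2}$ and letting $k\to\infty$ yields \eref{e341}.

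The main obstacle is Step~1: tracking the many bilinear and remainder terms and regrouping them into precisely the expression $g^{k,h}_{5}$ is a long and delicate computation in which every $\omega$-object must be matched to the right arguments; Step~2 is then essentially mechanical once Lemmas~\ref{l13}, \ref{l14} and \ref{l18} are in place.
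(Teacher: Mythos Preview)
Your proposal is correct and follows essentially the same approach as the paper's proof: a direct algebraic expansion using the definitions of $L$, $E$, $B$ and the identities \eref{e300}, \eref{e338} together with the $\omega_{D_if}$ remainders to obtain \eref{e340}, followed by a term-by-term estimate invoking \eref{e25}, \eref{e45}, \eref{e344}, \eref{e345}, \eref{e389a}, \eref{e347}, \eref{e335}, \eref{e304}, \eref{e339} and Lemma~\ref{l18} to establish \eref{e341}. In fact your citation of \eref{e339} for the summand $D_3f(\vc v(s))g^{k,h}_{4}(s)$ is the correct one (the paper's text cites \eref{e337} there, which appears to be a misprint).
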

\begin{proof}
Straightforward manipulations yield for a.e.\ $s\in[0,\alpha]$
\begin{eqnarray*}
\balra{0.cm}{L(s,x^k)(z^{k,h}_s,h^\theta,h^\xi)- L(s,x)(z^{h}_s+w^{h,{h_k}}_s,h^\theta,h^\xi)
- B(s)\Bigl\langle (z^{h}_s,h^\theta,h^\xi),(z^{h_k}_s,h^\theta_k,h^\xi_k)\Bigr\rangle }\nl
&=& D_2f(\vc v^k(s))z^{k,h}_s -D_2f(\vc v(s))z^{k,h}_s +D_2f(\vc v(s))(z^{k,h}_s-z^{h}_s-w^{h,{h_k}}_s)\nl
&& +D_3f(\vc v^k(s))E^k(s,z^{k,h}_s,h^\xi) - D_3f(\vc v(s))E^k(s,z^{k,h}_s,h^\xi)\nl
&&+D_3f(\vc v(s))\Bigl(E^k(s,z^{k,h}_s,h^\xi)-E(s,z^{h}_s,h^\xi)\Bigr)+D_4f(\vc v^k(s))h^\theta -D_4f(\vc v(s))h^\theta\nl
&& -D_3f(\vc v(s))E(s,w^{h,{h_k}}_s,0)
-B(s)\Bigl\langle (z^{h}_s,h^\theta,h^\xi),(z^{h_k}_s,h^\theta_k,h^\xi_k)\Bigr\rangle \nl
\end{eqnarray*}
\begin{eqnarray*}
&=& D_2f(\vc v^k(s))z^{k,h}_s-D_2f(\vc v(s))z^{k,h}_s-D_{22}f(\vc v(s))\langle z^{k,h}_s,x^k_s-x_s\rangle \nl
&&-D_{23}f(\vc v(s))\langle z^{k,h}_s,x^k(u^k(s))-x(u(s))\rangle 
-D_{24}f(\vc v(s))\langle z^{k,h}_s,h^\theta_k\rangle \nl
&&+D_2f(\vc v(s))q^{k,h}_s+D_{22}f(\vc v(s))\langle z^{k,h}_s-z^{h}_s,x^k_s-x_s\rangle +D_{22}f(\vc v(s))\langle z^{h}_s,p^k_s\rangle\nl
&&+D_{23}f(\vc v(s))\langle z^{k,h}_s-z^{h}_s,x^k(u^k(s))-x(u(s))\rangle\nl
&&+D_{23}f(\vc v(s))\langle z^{h}_s,x^k(u^k(s))-x(u(s))-E(s,z^{h_k}_s,h^\xi_k)\rangle 
+D_{24}f(\vc v(s))\langle z^{k,h}_s-z^{h}_s,h^\theta_k\rangle \nl
&&+D_3f(\vc v^k(s))E^k(s,z^{k,h}_s,h^\xi) - D_3f(\vc v(s))E^k(s,z^{k,h}_s,h^\xi)\nl
&&-D_{32}f(\vc v(s))\langle E^k(s,z^{k,h}_s,h^\xi),x^k_s\!-\!x_s\rangle \nl
&&-D_{33}f(\vc v(s))\langle E^k(s,z^{k,h}_s,h^\xi),x^k(u^k(s))-x(u(s))\rangle 
 -D_{34}f(\vc v(s))\langle E^k(s,z^{k,h}_s,h^\xi),h^\theta_k\rangle\nl
&&+ D_{32}f(\vc v(s))\langle E^k(s,z^{k,h}_s,h^\xi)-E(s,z^{h}_s,h^\xi),x^k_s-x_s\rangle 
+ D_{32}f(\vc v(s))\langle E(s,z^{h}_s,h^\xi),p^k_s\rangle \nl
&& +D_{33}f(\vc v(s))\langle E^k(s,z^{k,h}_s,h^\xi)-E(s,z^{h}_s,h^\xi),x^k(u^k(s))-x(u(s))\rangle\nl
&&+D_{33}f(\vc v(s))\langle E(s,z^{h}_s,h^\xi),x^k(u^k(s))-x(u(s))-E(s,z^{h_k}_s,h^\xi_k)\rangle\nl
&&+D_{34}f(\vc v(s))\langle E^k(s,z^{k,h}_s,h^\xi)-E(s,z^{h}_s,h^\xi),h^\theta_k\rangle\nl
&&+D_3f(\vc v(s))[E^k(s,z^{k,h}_s,h^\xi)-E(s,z^{h}_s,h^\xi)-H(s)\langle(z^{h}_s,h^\xi),(z^{k,h}_s,h^\xi_k)\rangle\!-E(s,w^{h,h_k}_s,0)]\nl
&&+D_4(\vc v^k(s))h^\theta-D_4(\vc v(s))h^\theta-D_{42}f(\vc v(s))\langle h^\theta,x^k_s-x_s\rangle\nl
&&-D_{43}f(\vc v(s))\langle h^\theta,x^k(u^k(s))-x(u(s))\rangle -D_{44}f(\vc v(s))\langle h^\theta,h^\theta_k\rangle\nl
&&+D_{42}f(\vc v(s))\langle h^\theta,p^k_s\rangle
+D_{43}f(\vc v(s))\langle h^\theta,x^k(u^k(s))-x(u(s))-E(s,z^{h_k}_s,h^\xi_k)\rangle,
\end{eqnarray*}
which implies \eref{e340}, using \eref{e300} and \eref{e338}.
Let $L_3=L_3(\alpha,M_1,M_2,M_3)$ be defined by (A1) (iv).
Then (A1) (iv), \eref{e25}, \eref{e94}, \eref{e345}, \eref{e344},  
 \eref{e389a} and  \eref{e347} yield
\begin{eqnarray*}
 \balra{0.cm}{\int_0^\alpha |g^{k,h}_{5}(s)|\,ds}\nl
&\leq& \alpha L_3c_{1,k}N_1|h|_\Gamma L|h_k|_\Gamma
+\alpha L_3N_1|h|_\Gamma\max_{s\in[0,\alpha]}|p^k_s|_C
+\alpha L_3c_{1,k}N_1|h|_\Gamma K_2|h_k|_\Gamma\nl
&&+ L_3N_1|h|_\Gamma \int_0^\alpha|g_{1,k}(s)|\,ds+\alpha L_3c_{1,k}N_1|h|_\Gamma|h_k|_\Gamma
+\alpha L_3K_8|h|_\Gamma\max_{s\in[0,\alpha]}|p^k_s|_C\nl
&&+\alpha L_3 c_{3,k} |h|_\Gamma L|h_k|_\Gamma
+\alpha L_3c_{3,k} |h|_\Gamma K_2|h_k|_\Gamma\nl
&&+L_3 K_8|h|_\Gamma  \int_0^\alpha|g_{1,k}(s)|\,ds+L_1 \int_0^\alpha|g^{h}_{3,k}(s)|\,ds
+\alpha L_3c_{3,k} |h|_\Gamma|h_k|_\Gamma\nl
&&+L_3|h|_\Gamma\max_{s\in[0,\alpha]}|p^k_s|_C+L_3|h|_\Gamma \int_0^\alpha|g_{1,k}(s)|\,ds+\int_0^\alpha|\omega_{D_2f}(\vc v(s),\vc v^k(s),z^{k,h}_s)|\,ds\nl
&&+\int_0^\alpha|\omega_{D_3f}(\vc v(s),\vc v^k(s),E^k(s,z^{k,h}_s,h^\xi))|\,ds
+\int_0^\alpha|\omega_{D_4f}(\vc v(s),\vc v^k(s),h^\theta_k)|\,ds.
\end{eqnarray*}
Hence  $c_{1,k}\to0$, $c_{3,k}\to0$ as $k\to\infty$,
\eref{e335}, \eref{e304}, \eref{e337}, \eref{e358}, \eref{e359} and \eref{e360}   imply \eref{e341}.

\end{proof}
\medskip

Now we are ready to prove the main result of this section.
\medskip

\begin{theorem}\label{t3}
 Assume (A1) (i)--(vi), (A2) (i)--(vii). Then for $t\in[0,\alpha]$ the maps 
$$
\Gamma_2\supset (P_2\cap\Gamma_2)\to\setR^n,\quad \gamma\mapsto x(t,\gamma)
$$
and
$$
\Gamma_2\supset (P_2\cap\Gamma_2)\to C,\quad \gamma\mapsto x_t(\cdot,\gamma)
$$
are twice differentiable wrt $\gamma$ for every $\gamma\in P_2\cap \Gamma_2\cap \calP$, and 
$$
D_{22} x(t,\gamma)\langle h,y\rangle=w^{h,y}(t),\qquad h,y\in\Gamma_2,
$$
and
$$
D_{22} x_t(\cdot,\gamma)\langle h,y\rangle=w^{h,y}_t,\qquad h,y\in\Gamma_2,
$$
where $w^{h,y}$ is the solution of the \IVP{e330}{e331}.
Moreover, if in addition, (A2) (viii) holds, then the maps
$$
\setR\times\Gamma_2\supset \Bigl([0,\alpha]\times (P_2\cap\Gamma_2\cap\calP)\Bigr)
\to\calL^2(\Gamma_2\times\Gamma_2,\setR^n),\quad (t,\gamma)\mapsto D_{22}x(t,\gamma)
$$
and
$$
\setR\times\Gamma_2\supset \Bigl([0,\alpha]\times (P_2\cap\Gamma_2\cap\calP)\Bigr)
\to \calL^2(\Gamma_2\times\Gamma_2,C),\quad (t,\gamma)\mapsto D_{22}x_t(\cdot,\gamma)
$$
are continuous.
\end{theorem}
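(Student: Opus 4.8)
The plan is to follow the scheme of the proof of Theorem~\ref{t2}, now with the first variational \IVP{e3}{e4} in the role of \IVP{e1}{e2} and the second variational \IVP{e330}{e331} in the role of \IVP{e3}{e4}. Fix $\gamma\in P_2\cap\Gamma_2\cap\calP$ and a sequence $h_k\in\Gamma_2$ ($k\in\setN$) with $h_k\neq0$, $|h_k|_{\Gamma_2}\to0$ and $\gamma+h_k\in P_2\cap\Gamma_2\cap\calP$; since $|h_k|_\Gamma\le|h_k|_{\Gamma_2}$, hypothesis (H) holds and every lemma of this section applies. By Theorem~\ref{t2} we have $D_2x(t,\gamma+h_k)h=z^{k,h}(t)$ and $D_2x(t,\gamma)h=z^h(t)$ (and likewise for the segment functions), so, by the definition of the second derivative given in Section~\ref{sec_def}, the first half of the theorem reduces to proving that
$$
\lim_{k\to\infty}\ \sup_{0\neq h\in\Gamma_2}\ \frac{|q^{k,h}_t|_C}{|h|_{\Gamma_2}\,|h_k|_{\Gamma_2}}=0, \qquad q^{k,h}(t):=z^{k,h}(t)-z^h(t)-w^{h,h_k}(t),
$$
and the same computation will identify the second derivative as $w^{h,y}$.

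The key steps are then as follows. For $t\in[-r,0]$ one has $q^{k,h}(t)=0$; for $t\in[0,\alpha]$, subtracting the integrated forms of \eref{e3} for $z^{k,h}$ and $z^h$ and of \eref{e330} for $w^{h,h_k}$ and using linearity of $L(s,x)$ in its argument gives
$$
q^{k,h}(t)=\int_0^t\Bigl[L(s,x^k)(z^{k,h}_s,h^\theta,h^\xi)-L(s,x)\bigl(z^h_s+w^{h,h_k}_s,h^\theta,h^\xi\bigr)-B(s)\langle(z^h_s,h^\theta,h^\xi),(z^{h_k}_s,h^\theta_k,h^\xi_k)\rangle\Bigr]ds.
$$
By Lemma~\ref{l15} the bracketed integrand equals $L(s,x)(q^{k,h}_s,0,0)+g^{k,h}_5(s)$, hence by \eref{e368}
$$
|q^{k,h}(t)|\le L_1N_0\int_0^t|q^{k,h}_s|_C\,ds+\int_0^\alpha|g^{k,h}_5(s)|\,ds,\qquad t\in[0,\alpha].
$$
Since $|q^{k,h}_0|_C=0$, Lemma~\ref{l0} yields $|q^{k,h}_t|_C\le N_1\int_0^\alpha|g^{k,h}_5(s)|\,ds$; dividing by $|h|_{\Gamma_2}|h_k|_{\Gamma_2}$, taking the supremum over $0\neq h\in\Gamma_2$ and letting $k\to\infty$, the desired limit follows from \eref{e341}. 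Bilinearity of $(h,y)\mapsto w^{h,y}(t)$ was noted after \eref{e331} and its boundedness is Lemma~\ref{l30}, so $D_{22}x(t,\gamma)\langle h,y\rangle=w^{h,y}(t)$ and $D_{22}x_t(\cdot,\gamma)\langle h,y\rangle=w^{h,y}_t$.

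For the continuity assertion (where (A2) (viii) enters) I would argue in two moves. Continuity in $\gamma$ is immediate from Lemma~\ref{l32}: for $h_k$ as above, $w^{k,h,y}(t):=w(t,\gamma+h_k,h,y)=D_{22}x(t,\gamma+h_k)\langle h,y\rangle$, and \eref{e394} gives $\sup_{h,y\neq0}|w^{k,h,y}_t-w^{h,y}_t|_C/(|h|_{\Gamma_2}|y|_{\Gamma_2})\le c_{7,k}\to0$. For the $t$-variable, \eref{e330}, \eref{e368}, \eref{e393} and \eref{e390} give $|\dot w^{h,y}(t)|\le(L_1N_0N_5+K_{12})|h|_\Gamma|y|_\Gamma$ a.e., so, since $w^{h,y}\equiv0$ on $[-r,0]$, $w^{h,y}$ is Lipschitz on $[-r,\alpha]$ with a constant proportional to $|h|_\Gamma|y|_\Gamma$ that is uniform along $\{\gamma+h_k\}$ (every constant entering it — $N_0$, $N_1$, $N_5$, $K_{12}$, $K_4$ and so on — stays bounded once $|h_k|_{\Gamma_2}\to0$). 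Combining the two moves, $|D_{22}x_{t+\nu_k}(\cdot,\gamma+h_k)-D_{22}x_t(\cdot,\gamma)|_{\calL^2(\Gamma_2\times\Gamma_2,C)}\le C|\nu_k|+c_{7,k}\to0$ for $\nu_k\to0$ with $t+\nu_k\in[0,\alpha]$, and the $\setR^n$-valued case is identical via $|w(t)|\le|w_t|_C$.

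The genuine difficulty is not in this argument but in Lemma~\ref{l15} and the lemmas it rests on (notably Lemmas~\ref{l13}, \ref{l14}, \ref{l17}, \ref{l18} and the piecewise-monotonicity Lemmas~\ref{l5}, \ref{l21}): the delicate point is the second-order Taylor expansion of $s\mapsto L(s,x^k)(z^{k,h}_s,h^\theta,h^\xi)$ around the solution $z^h$ of the first variational equation, in which the merely $\Lw$ term $\dot x(u^k(s))$ — and, under (A2) (viii), $\ddot x(u^k(s))$ — must be expanded and the remainders controlled in $L^1$; this is also why the $\Www$-norm on initial functions cannot be avoided. Once those lemmas are in hand, the only remaining care in the present proof is bookkeeping: tracking which of $|\cdot|_\Gamma$ and $|\cdot|_{\Gamma_2}$ appears in each estimate, and verifying the uniformity over $\{\gamma+h_k\}$ needed to upgrade the pointwise-in-$\gamma$ bounds to the continuity statement.
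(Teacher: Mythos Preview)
Your approach is essentially the paper's own: set up the difference $q^{k,h}=z^{k,h}-z^h-w^{h,h_k}$, invoke Lemma~\ref{l15} to rewrite the integrand as $L(s,x)(q^{k,h}_s,0,0)+g^{k,h}_5(s)$, apply the Gronwall-type Lemma~\ref{l0} via \eref{e368}, and conclude from \eref{e341}; the continuity is then read off from Lemma~\ref{l32} together with the uniform Lipschitz bound in $t$. Your write-up of the $t$-continuity is in fact slightly more explicit than the paper's, which just says ``follows from Lemma~\ref{l32}''.

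There is one genuine slip: you take the perturbation sequence with $\gamma+h_k\in P_2\cap\Gamma_2\cap\calP$. For Fr\'echet differentiability of the map on the domain $P_2\cap\Gamma_2$ you must allow an \emph{arbitrary} sequence $h_k\in\Gamma_2$ with $\gamma+h_k\in P_2$ (that already forces $\gamma+h_k\in P_2\cap\Gamma_2$); the compatibility set $\calP$ is not open in $\Gamma_2$, so restricting to it does not suffice. The compatibility condition is needed only at the base point $\gamma$ (to make $\dot x$ Lipschitz on all of $[-r,\alpha]$, feeding Lemmas~\ref{l9}, \ref{l25}, \ref{l13}(iii), \ref{l14}), and indeed hypothesis (H) and every lemma you cite --- in particular Lemma~\ref{l15} --- require only $\gamma+h_k\in P_2$ and $h_k\in\Gamma_2$. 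Simply drop ``$\cap\,\calP$'' from your assumption on $\gamma+h_k$ and the argument goes through verbatim; the paper does exactly this.
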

\begin{proof}
It follows from Theorem~\ref{t2} that $D_2 x(t,\gamma)\in\calL(\Gamma,\setR^n)$ exists 
for all $\gamma\in P_2$ and $t\in[0,\alpha]$. Since $|h|_\Gamma\leq |h|_{\Gamma_2}$ for all $h\in\Gamma_2$,
it follows that $D_2 x(t,\gamma)\Bigr|_{\Gamma_2}\in\calL(\Gamma_2,\setR^n)$, and $D_2 x(t,\gamma)\Bigr|_{\Gamma_2}$ is 
the derivtive of the map $\Gamma_2\supset (P_2\cap\Gamma_2)\to\setR^n$, $\gamma\to x(t,\gamma)$. 
For simplicity, the restiction of $D_2 x(t,\gamma)$ to $\Gamma_2$ will be denoted by $D_2 x(t,\gamma)$, as well.
Theorem~\ref{t2} yields that $D_2 x(t,\gamma)h=z(t,\gamma,h)$, where $z(t,\gamma,h)$
is the solution of the \IVP{e3}{e4} for $h\in\Gamma_2$.

Let $\gamma\in P_2\cap \Gamma_2\cap \calP$ be fixed, $h_k=(h^\phi_k,h^\theta_k,h^\xi_k)\in\Gamma_2$ ($k\in\setN$) be
a sequence such that $\gamma+h_k\in P_2$ for $k\in\setN$, $0\neq h=(h^\phi,h^\theta,h^\xi)\in\Gamma_2$.
Let $x(t):=x(t,\gamma)$ and $x^k(t):=x(t,\gamma+h_k)$ be the solutions of the \IVP{e1}{e2}, 
$z^h(t):=D_2x(t,\gamma)h$ and $z^{k,h}(t):=D_2x(t,\gamma+h_k)h$ be the solution of the \IVP{e3}{e4},
and $w^{h,h_k}(t)$ be the solution of the \IVP{e330}{e331} corresponding to parameters $h$ and $h_k$.
Then we have for $t\in[0,\alpha]$
\begin{eqnarray*}
 z^{k,h}(t) &=& h^\phi(0)+\int_0^t L(s,x^k)(z^{k,h}_s,h^\theta,h^\xi)\,ds,\nl
 z^{h}(t) &=& h^\phi(0)+\int_0^t L(s,x)(z^{h}_s,h^\theta,h^\xi)\,ds,\nl
w^{h,{h_k}}(t) &=& \int_0^t \Bigl(L(s,x)(w^{h,{h_k}}_s,0,0)
+B(s)\Bigl\langle (z^{h}_s,h^\theta,h^\xi),(z^{h_k}_s,h^\theta_k,h^\xi_k)\Bigr\rangle\Bigr)ds.
\end{eqnarray*}
Hence Lemma~\ref{l15} and the definition of $q^{k,h}$ give
\begin{eqnarray*}
q^{k,h}(t)
&=& \int_0^t \Bigl(L(s,x^k)(z^{k,h}_s,h^\theta,h^\xi)- L(s,x)(z^{h}_s+w^{h,{h_k}}_s,h^\theta,h^\xi)\nl
&&\quad- B(s)\Bigl\langle (z^{h}_s,h^\theta,h^\xi),(z^{h_k}_s,h^\theta_k,h^\xi_k)\Bigr\rangle\Bigr)\,ds\nl
&=&\int_0^t g^{k,h}_{5}(s)\,ds+\int_0^t L(s,x)(q^{k,h}_s,0,0)\,ds,\qquad t\in[0,\alpha],
\end{eqnarray*}
so \eref{e368} yields
$$
|q^{k,h}(t)|
\leq \int_0^t |g^{k,h}_{5}(s)|\,ds+ \int_0^t |L(s,x)(q^{k,h}_s,0,0)|\,ds
\leq \int_0^\alpha |g^{k,h}_{5}(s)|\,ds+L_1N_0\int_0^t |q^{k,h}_s|_C\,ds,
$$
for $t\in[0,\alpha]$.
Using that $q^{k,h}(t)=0$ for $t\in[-r,0]$, Lemma~\ref{l0} implies
$$
|q^{k,h}(t)|\leq |q^{k,h}_t|_C\leq N_1\int_0^\alpha |g^{k,h}_{5}(s)|\,ds,\qquad t\in[0,\alpha],
$$
where $N_1:=e^{L_1N_0\alpha}$.
Therefore \eref{e341} yields for $t\in[0,\alpha]$
$$
\lim_{k\to\infty}\sup_{h\neq 0\atop h\in\Gamma_2} \frac{|q^{k,h}(t)| }{|h|_{\Gamma_2}|h_k|_{\Gamma_2}}
\leq\lim_{k\to\infty}\sup_{h\neq 0\atop h\in\Gamma_2} \frac{|q^{k,h}_t|_C }{|h|_{\Gamma_2}|h_k|_{\Gamma_2}}
\leq\lim_{k\to\infty}\sup_{h\neq 0\atop h\in\Gamma_2} \frac{ N_1}{|h|_{\Gamma_2}|h_k|_{\Gamma_2}}\int_0^\alpha |g^{k,h}_{5}(s)|\,ds=0,
$$
which completes the proof of the second-order differentiability wrt parameters.
The continuity of $D_{22} x(t,\gamma)$ follows from Lemma~\ref{l32}.
\end{proof}
\bigskip

We note that the method  used in this section to prove the existence of the second order derivative $D_{22} x(t,\gamma)$
can not be used to prove the existence of the third order derivative, since some parts of the proof relied on the
assumption that the parameter $\gamma$ satisfies the compatibility condition $\gamma\in\calP$. 
The key step to show the existence of higher order derivatives is to  get rid of this assumption
in the proof of Theorem~\ref{t3}.

\end{document}